\documentclass[11pt]{amsart}\usepackage{tikz-cd}
\usepackage{mathtools}
\usepackage{amsfonts}
\usepackage{graphicx}
\usepackage{verbatim}
\usepackage{textcomp}\usepackage{amssymb}
\usepackage{cite}
\usepackage{color}
\usepackage[all]{xy}
\usepackage{graphicx}
\usepackage{color}

\definecolor{NoteColor}{rgb}{1,0,0}

\newcommand{\note}[1]{\textcolor{NoteColor}{^\sharp 1}}

\usepackage{hyperref}
\usepackage{soul}
\hypersetup{
    colorlinks,
    citecolor=black,
    filecolor=black,
    linkcolor=black,
    urlcolor=black
}
\usepackage{tikz}
\usepackage[all]{xy}
\usepackage{graphicx}
\usetikzlibrary{backgrounds}
\setlength{\oddsidemargin}{.25in} 
\setlength{\evensidemargin}{.25in}
\setlength{\textwidth}{6in}
\hfuzz2pt 
\newtheorem{theorem}{Theorem}[section]
\newtheorem{lemma}[theorem]{Lemma}
\newtheorem{proposition}[theorem]{Proposition}
\newtheorem{corollary}[theorem]{Corollary}   

\allowdisplaybreaks

\newtheorem{conjecture}[theorem]{Conjecture}

\theoremstyle{definition}
\newtheorem{definition}[theorem]{Definition}
\newtheorem{remark}[theorem]{Remark}

{{\sc Proof of Theorem~\ref{earthquakethm}}}
{{\sc q.e.d.} \\}

\newenvironment{thm:1relvw}
{{\sc Proof of Theorem~\ref{thm:1relvw}.}}
{{\sc q.e.d.} \\}
\newenvironment{genNTheorem 2}
{{\sc Proof of Theorem~\ref{genNTheorem 2}.}}
{{\sc q.e.d.} \\}
{{\sc Proof of Claim~\ref{convvq}.}}%
{{\sc q.e.d.} \\}
{{\sc Proof of Lemma~\ref{cd2general}.}}%
{{\sc q.e.d.} \\}
{{\sc Proof of Lemma~\ref{usordv}.}}%
{{\sc q.e.d.} \\}
  \newenvironment{support}%
{{\sc Proof of Theorem~\ref{thm:supptmeasure}.}}%
{{\sc q.e.d.} \\}
{{\sc Proof of Theorem~\ref{mainthmpushforw}.}}%
{{\sc q.e.d.} \\}
 \newenvironment{Length=totvar}%
{{\sc Proof of Theorem~\ref{Length=totvar}.}}%
{{\sc q.e.d.} \\}
{{\sc Proof of Theorem~\ref{convthmrepresentearth}.}}%
{{\sc q.e.d.} \\}
  \newenvironment{Length-totvarThur}%
{{\sc Proof of Theorem~\ref{Length-totvarThur}.}}%
{{\sc q.e.d.} \\}

  \newenvironment{thm:existence}%
{{\sc Proof of Theorem~\ref{thm:existence}.}}%
{{\sc q.e.d.} \\}
\numberwithin{equation}{section}

%
{{\sc Proof of Theorem~\ref{regularity}.}}%
{{\qed} \\}
\newenvironment{proof:main}%
{{\sc Proof of Theorem~\ref{ex}.}}%
{{\qed} \\}

\newenvironment{proof:pluriharmonic}%
    {{\sc Proof of Theorem~\ref{theorem:pluriharmonic}.}}%
  {{\qed} \\}  
  
  \newenvironment{proofof(iv)}%
    {{\sc Proof of $(iv)$.}}%
  {{\qed} \\}  
\newcommand{\norm}[1]{\left\Vert^\sharp 1\right\Vert}
\newcommand{\abs}[1]{\left\vert^\sharp 1\right\vert}
\newcommand{\R}{\mathbb R}

\newcommand{\HH}{\mathbb H}

\voffset -.5in

\begin{document}
\title{Analytic properties of  Stretch maps and geodesic laminations}
\author[Daskalopoulos]{Georgios Daskalopoulos}
\address{Brown Univeristy \\
Providence, RI}
\author[Uhlenbeck]{Karen Uhlenbeck}
\address{University of Texas Austin, TX and Institute for Advanced Study, Princeton, NJ}
\thanks{GD supported in part by NSF DMS-2404915}
\maketitle

\begin{abstract}
In a 1998 preprint (cf. \cite{thurston}), Bill Thurston outlined a Teichm\"uller theory for hyperbolic surfaces based on maps between surfaces which minimize the Lipschitz constant (minimum stretch or best Lipschitz maps). In this paper we continue the analytic investigation which we began in \cite{daskal-uhlen1}.  In the spirit of the construction of infinity-harmonic functions, we produce best Lipschitz maps $u$ as limits $p \rightarrow \infty$ of minimizers of  $p$-Schatten integrals ($p$-Schatten harmonic maps) in a fixed homotopy class  between  hyperbolic surfaces. We address existence and regularity of $p$-Schatten harmonic maps with the latter, due to higher degeneracies,  being significantly harder  than for ordinary $p$-harmonic maps. Moreover, we  construct  Lie algebra valued dual functions which minimize a dual ($1/p+1/q=1$) $q$-Schatten  integral  and limit as $q \rightarrow 1$ to a locally defined, Lie algebra valued function $v$ of bounded variation.  One of  the main results of the paper is the surprising fact that the support of the measure  $dv$ (the derivative of $v$) lies on the canonical geodesic lamination constructed by Thurston \cite{thurston} and further studied by Gueritaud-Kassel \cite{kassel}. In the sequel paper \cite{daskal-uhlen2} we will show how these Lie algebra valued measures induce  a transverse measure on the canonical lamination and relate to other aspects of Thurston theory.
\end{abstract}

\section{Introduction}

In a 1998 preprint Thurston proposed a model for Teichm\"uller space based on best Lipschitz maps between Riemann surfaces. Using topological methods he constructs best Lipschitz maps $u$ with maximal stretch locus  on a geodesic lamination, which is mapped linearly by $u$ to the corresponding lamination in the target.   Thurston conjectures  there may be a simpler approach based on the duality between measures and $L^\infty$ norms but gives no clue how to do it.  This paper  carries out the analysis needed to develop  such a theory. We leave the connection with topology  to the subsequent paper \cite{daskal-uhlen2}.

There are two goals to the paper.  First, we initiate  a theory of best Lipschitz maps between surfaces and their approximations (Schatten harmonic maps) in the spirit of infinity-harmonic functions.
There was not much known about this problem in the literature (other than \cite{smart}) so we  had to develop most of the analysis from scratch. Along the way, we proved some straight PDE type results which give some insight into the behavior of the solutions to these somewhat unusual equations. However, the real surprise was the appearance of dual transverse measures with values in the Lie algebra and with support on Thurston's canonical lamination.  Thurston only hints of their possibility and gives  no clue as to how to define them, find them or use them.  Infinity harmonic maps to manifolds of dimension greater than one are considerably harder to understand analytically than infinity-harmonic functions.  That there are enough estimates allowing us to prove that the support of the dual measures is on the canonical lamination is at the limits of our understanding of these best Lipschitz maps. 

This introduction contains an outline of the topics and a statement of the main results.  We end  with a description of the more topological results in \cite{daskal-uhlen2}. Section~\ref{sect1} contains the construction of the infinity-harmonic map and their $p$-approximations. For this,
we fix a homotopy class of maps given by $f:M \rightarrow N$. If the boundary $\partial M \neq \emptyset$, either fix the boundary data (Dirichlet problem) or put no restriction on the boundary (Neumann problem). {\it If the dimension of  $N$ is greater than one,  the study of the usual $p$-harmonic maps as $p \rightarrow \infty$ produces a map which minimizes $\max |df|$, which is not the Lipschitz constant} (cf. \cite{katz}). The geometric significance of this equation is unknown.   Sheffield-Smart (cf.  \cite{smart}) suggest using the approximation of $\int_Ms(df)^p*1$, for $s(df$) the largest singular value of $df$.  For finite $p$ this unfortunately does not lead to a recognizable elliptic partial differential equation as its Euler-Lagrange equation. We instead use a Schatten norm, in which the integrand is essentially the sum of the $p$-th powers of the eigenvalues of $df$.  Let
\[
J_p(f) = \int_M TrQ(df)^p *1 
\]
where 
$Q(df)^2 =  dfdf^T$ is a non-negative symmetric linear map mapping the tangent space $T_f N$ to itself.  The Euler-Lagrange equations  of $J_p$ are 
\begin{equation}\label{intrel}
D^*Q(df)^{p-2}df=0
\end{equation}  
where $D=D_f$ is the pullback of the Levi-Civita connection on $f^{-1}(TN)$.
We prove existence (cf. Theorem~\ref{thm:pdirichlet}) and uniqueness (cf. Corollary~\ref{cor:unique2}) for solutions of (\ref{intrel}).
We end the section with a variational construction of  the infinity-harmonic map.
Here we show that as $p \rightarrow \infty$, the minimizers  of $J_p$ converge to  a best Lipschitz map. This is Theorem~\ref{thm:existence}.
\

In Section~\ref{sec3} we construct the dual functions. The dual functions arise from  conservation laws associated with the symmetries of the target. Technically they have values in the dual of the Lie algebra, but since we constantly use the geometry coming from the indefinite invariant inner product, we will refer to them as Lie algebra valued. 
In \cite{daskal-uhlen1}, we found the dual function by inspection, but we found its Lie algebra valued counterpart in the present paper only by looking at the conserved quantities arising from the symmetries of the target via Noether's theorem. We describe the flat bundle structure needed to encode the local action of $SO(2,1)$ on $N$. This is $E=\tilde M \times_\rho \R^{2,1}$ where $\rho$ has image in the local isometries of  $N$.  The dual functions are only defined locally. We will study their global formulation in the sequel paper \cite{daskal-uhlen2}. The description of the closed 1-form needed to obtain them is Theorem~\ref{Prop:Elag-bund} listed below:

\begin{theorem}  Let $u=u_p$ satisfy the $J_p$-Euler-Lagrange equations.  Then, in the distribution sense
$d* (S_{p-1}(du) \times u)=0.$ 
\end{theorem}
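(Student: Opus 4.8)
The one-form $*(S_{p-1}(du)\times u)$ is the Noether current for the isometric $SO(2,1)$-action on $N$, and the plan is to verify directly that it is weakly closed, using the Euler--Lagrange equation $D^*S_{p-1}(du)=0$ of Section~\ref{sect1} (i.e. $D^*Q(du)^{p-2}du=0$, where up to the normalization fixed there $S_{p-1}(du)=Q(du)^{p-2}du$) together with the skew-symmetry of $\nabla K$ for Killing fields $K$. First I would set up the local model: write $N$ locally as the hyperboloid $\mathbb H^2=\{x\in\mathbb R^{2,1}:\langle x,x\rangle=-1,\ x_0>0\}$ with isometry group $SO_0(2,1)$, and identify $\mathfrak{so}(2,1)\cong\mathbb R^{2,1}$ by sending $\xi$ to the endomorphism $v\mapsto\xi\times v$ (Minkowski cross product); the associated Killing field on $\mathbb H^2$ is $K_\xi(x)=\xi\times x$, which is tangent to $\mathbb H^2$ since $\langle\xi\times x,x\rangle=0$. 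On a developing chart $u=u_p$ is genuinely $\mathbb H^2$-valued, $S_{p-1}(du)$ is a one-form with values in $u^{-1}T\mathbb H^2\subset M\times\mathbb R^{2,1}$, and since $u\in W^{1,p}(M,N)$ with $p>\dim M$ is in fact $C^{1,\alpha}$ by the regularity in Section~\ref{sect1}, $S_{p-1}(du)\in L^{p/(p-1)}$; hence all pairings below lie in $L^1_{\mathrm{loc}}$ and the identities are read against $C_c^\infty$ test forms.

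For fixed $\xi$ consider the scalar one-form $\langle S_{p-1}(du),K_\xi\circ u\rangle$, where $\langle\cdot,\cdot\rangle$ is the metric on $u^{-1}TN$. I would invoke the Leibniz identity $\delta\langle\alpha,W\rangle=\langle D^*\alpha,W\rangle-\langle\alpha,DW\rangle$, valid with the usual sign conventions for any $u^{-1}TN$-valued one-form $\alpha$ and section $W$ of $u^{-1}TN$, applied with $\alpha=S_{p-1}(du)$ and $W=K_\xi\circ u$. The term $\langle D^*S_{p-1}(du),W\rangle$ vanishes by the Euler--Lagrange equation. For the remaining term, $D(K_\xi\circ u)=A\circ du$ with $A=(\nabla^N K_\xi)\circ u\in\mathrm{End}(u^{-1}TN)$ pointwise skew-symmetric because $K_\xi$ is Killing; in an adapted orthonormal frame $\langle S_{p-1}(du),A\circ du\rangle=\mathrm{tr}\big(A\,du\,S_{p-1}(du)^{T}\big)=\mathrm{tr}\big(A\,Q(du)^{p}\big)=0$, since $du\,S_{p-1}(du)^{T}=du\,du^{T}Q(du)^{p-2}=Q(du)^{p}$ is symmetric while $A$ is skew-symmetric. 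Hence $\langle S_{p-1}(du),K_\xi\circ u\rangle$ is weakly co-closed, i.e. $d*\langle S_{p-1}(du),K_\xi\circ u\rangle=0$ in the distribution sense.

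To obtain the Lie algebra valued statement, use the cyclic invariance of the Minkowski triple product: $\langle S_{p-1}(du),K_\xi\circ u\rangle=\langle S_{p-1}(du),\xi\times u\rangle=\pm\langle\xi,\,u\times S_{p-1}(du)\rangle$, so for the $\mathbb R^{2,1}\cong\mathfrak{so}(2,1)$-valued one-form $\omega:=S_{p-1}(du)\times u$ (with the sign convention of Section~\ref{sec3}) one has $\langle\xi,*\omega\rangle=\pm*\langle S_{p-1}(du),K_\xi\circ u\rangle$ for every constant $\xi$. Since $d$ and $*$ commute with contraction against a fixed $\xi$, applying the previous step to a basis of $\mathfrak{so}(2,1)$ yields $d*\omega=0$ in $\mathcal D'$, which is the assertion. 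Here everything is local on developing charts; globally $\omega$ is a one-form with values in the flat $SO(2,1)$-bundle of Section~\ref{sec3}, but only the local statement is claimed.

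I expect the one point needing genuine care to be the algebraic identity $du\,S_{p-1}(du)^{T}=Q(du)^{p}$ and the ensuing vanishing of $\langle S_{p-1}(du),D(K_\xi\circ u)\rangle$ — that is, checking that the Schatten--von Neumann momentum $S_{p-1}(du)$ inherits the same compatibility with Killing vector fields that $du$ enjoys in the harmonic-map case — together with the bookkeeping needed to justify the distributional Leibniz rule from only $C^{1,\alpha}$ regularity of $u_p$ and $L^{p/(p-1)}$ regularity of $S_{p-1}(du)$. Once these are settled the conservation law follows at once.
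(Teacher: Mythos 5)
Your proof is correct and follows essentially the same route as the paper: test the weak Euler--Lagrange equation against the infinitesimal isometry $\phi u$ (your $f\,K_\xi\circ u$), split by Leibniz, and kill the zeroth-order term by pairing the symmetric $Q(du)^p$ against the skew-symmetric generator; your cyclic-invariance step is the paper's adjointness $\beta_u^\sharp=2\alpha_u$ from Proposition~\ref{helpemb}. One correction: $u_p$ is \emph{not} known to be $C^{1,\alpha}$ --- the paper explicitly leaves this as Conjecture~\ref{highregu} and proves only $S_{p/2}(du_p)\in H^1$ --- but your argument never actually needs it, since $du\in L^p$ already gives $S_{p-1}(du)\in L^{q}$ and puts all the pairings in $L^1$.
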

Here $S_{p-1}(du)=Q(du)^{p-2}du$ and  $d$ is the derivative computed using the flat on $ad(E)$.  If we set $ V_q = *(S_{p-1}(du) \times u)$ then $V_q$ is the closed $1 = \dim M-1$  form predicted by Noether's theorem. We can set locally $dv_q = V_q$.
As in the case of functions, these dual fields also satisfy the Euler-Lagrange equations for a functional based on Schatten $q$ norms, $1/q + 1/p = 1.$ This is the content of Theorems~\ref{TTheorem A6} and~\ref{TTheorem A66}.

In our first paper \cite{daskal-uhlen1}, we omitted to examine the conservation laws which arise from the local symmetries on the domain  surface $M.$  Section~\ref{consnoetdom} remedies this situation. We   start by computing the energy momentum tensor  $T=T(du)$ associated to  a $J_p$-minimizer $u=u_p$  and show that  it is divergence free. We prove the theorem in somewhat greater generality covering a wider class of variational problems between Riemannian manifolds (another example is ordinary $p$-harmonic maps). More specifically, let $I_p$ be any functional of the type considered in the beginning of Section~\ref{enmomtens}. Then:

\begin{theorem}\label{genNTheorem 21} If $u=u_p$ is a minimum in $W^{1,p}(M,N)$ of $I_p$, then $D^*T= 0$, i.e the symmetric (0,2)-tensor $T=T(du)$ is divergence free with respect to the covariant derivative.
\end{theorem}

This is Theorem~\ref{genNTheorem 2}.
In the case of  hyperbolic surfaces, we push $*T$ forward to the Lie algebra bundle to obtain, as predicted by Noether's theorem, a {\it closed} $(n-1)=1$-form $W$ with values in the Lie algebra.   
Note that this construction depends only on the local symmetric space structure of the domain.  We conjecture that some version is true for a variety of integrands and domains $M$ with $\dim M \geq 2.$

In Section~\ref{sect:regtheor} we discuss results on regularity of minimizers of the functionsl $J_p$.
This section deals with two dimensions only. The functional $J_p$ leads to an Euler-Lagrange equation which is elliptic as long as the eigenvalues of the derivative of the map are non-zero.  In dimension 2, at least $C^{1,\alpha}$ regularity would ordinarily  be expected.  However, we were unable to prove this even for $p = 4.$ Possible disparity between the two eigenvalues (which is why we picked this integrand) invalidates standard techniques such as hole filling. Since so much of the later chapters involve computations on solutions of the Euler-Lagrange equations, we include the regularity theorem that is sufficient for our applications.  Note that the apriori estimates on smooth solutions are quite easy to obtain.  However, to use these estimates, we would have to expand our integrands to a one parameter family that is known to have smooth solutions in the family up until the final point we are seeking to estimate.  This actually could be done, but is not clearly easier and is certainly not more applicable.  Our main regularity result is 
Theorem~\ref{mainregtH} and its Corollary~\ref{cormainredsh}. 
Here  we abbreviate $Q(du)^{p/2-1}du  = S_{p/2}(du)$.

\begin{theorem}If $u=u_p$ satisfies the $J_p$-Euler-Lagrange equations in $\Omega \subset M$, and $\Omega' \subset \Omega$, then $S_{p/2}(du) \big|_{\Omega'} \in H^1(\Omega')$ and 
\[
          ||S_{p/2}(du)||_{H^1(\Omega')} \leq  kpJ_p(du\big|_{\Omega'})^{1/2}.
  \]
Here $k$ depends on the geometry of $\Omega' \subset \Omega \subset\HH$ but not on $p.$   Moreover $du|\Omega'$ is in 
$L^s$ for all $s.$
\end{theorem}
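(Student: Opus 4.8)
The plan is to derive the $H^1$ bound by differentiating the Euler--Lagrange equation $D^*Q(du)^{p-2}du = 0$ and testing against $S_{p/2}(du) = Q(du)^{p/2-1}du$ itself, exploiting the non-positive curvature of the hyperbolic target to control the curvature terms with the correct sign. First I would localize: pick a cutoff $\eta$ supported in $\Omega$ with $\eta \equiv 1$ on $\Omega'$, and work on a coordinate ball in $\HH$. The key algebraic observation is that, writing $\sigma_1,\sigma_2$ for the singular values of $du$, the integrand $\mathrm{Tr}\,Q(du)^p = \sigma_1^p + \sigma_2^p$ and $|S_{p/2}(du)|^2 = \sigma_1^p+\sigma_2^p$, so that $J_p(du|_{\Omega'})$ is exactly the $L^2$-norm squared of $S_{p/2}(du)$; the content of the theorem is the gradient bound $\|\nabla S_{p/2}(du)\|_{L^2(\Omega')} \le kp\, J_p(du|_{\Omega'})^{1/2}$ with $k$ independent of $p$. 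To get this I would use a Bochner/Caccioppoli argument: contract the differentiated Euler--Lagrange equation with $\eta^2 S_{p/2}(du)$, integrate by parts, and isolate a term $\int \eta^2 \langle Q(du)^{p-2} D du, Ddu\rangle$ which dominates $\int \eta^2 |\nabla S_{p/2}(du)|^2$ up to a $p$-independent constant (this is where the chain rule $\nabla(Q^{p/2-1}du) = (p/2-1)(\ldots) + Q^{p/2-1}Ddu$ and the spectral calculus on the $2\times 2$ symmetric matrix $Q(du)$ enter, the factor $p/2$ being absorbed into the $kp$).

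The curvature terms produced by commuting $D$ with $d$ (i.e., $R^N(du,du)du$ contracted appropriately) have a favorable sign because $N$ has non-positive curvature, so they can be discarded; the terms involving the curvature of $M=\HH$ and the derivatives of $\eta$ are lower order and absorbed, contributing the geometric constant $k=k(\Omega'\subset\Omega)$. Collecting, one obtains $\int \eta^2|\nabla S_{p/2}(du)|^2 \le (kp)^2 \int_{\mathrm{supp}\,\eta} |S_{p/2}(du)|^2 = (kp)^2 J_p(du|_{\Omega})$, which after renaming the domains gives the stated inequality. The main obstacle I anticipate is exactly the point flagged in the introduction: the Euler--Lagrange operator degenerates where an eigenvalue $\sigma_i$ vanishes, so the differentiation and the spectral estimates are only valid a priori on the open set where $du$ is invertible. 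I would handle this either by first establishing the estimate for smooth nondegenerate solutions of a regularized functional $J_p^\varepsilon$ (replacing $Q^2$ by $Q^2+\varepsilon$) with constants uniform in $\varepsilon$, then passing to the limit $\varepsilon\to 0$, or by a truncation of $Q(du)$ near its zero set combined with the fact that $\{du = 0\}$ and more generally the degeneracy locus has the right capacity; the uniform-in-$p$ and uniform-in-$\varepsilon$ bookkeeping is the delicate part.

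For the final assertion that $du|_{\Omega'}\in L^s$ for all $s$, I would bootstrap from the $H^1$ bound: in two dimensions $H^1 \hookrightarrow L^r$ for every finite $r$ by the Moser--Trudinger embedding, so $S_{p/2}(du)\in L^r(\Omega')$ for all $r<\infty$, and since $|du|^{p/2} \le |S_{p/2}(du)|$ pointwise (again via the singular values, as $\sigma_i^{p/2}\le (\sigma_1^p+\sigma_2^p)^{1/2}$), this yields $du\in L^s(\Omega')$ for every $s<\infty$ once $p$ is chosen large enough, and then for all smaller $p$ by the a priori estimates already in hand. One should be slightly careful that the constants in the Moser--Trudinger step depend on $\Omega'$ but not on $p$, which is consistent with the statement; I do not expect this part to be difficult once the $H^1$ estimate is secured.
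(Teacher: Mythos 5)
Your a priori computation is the right one, and it is essentially the Bochner/Caccioppoli estimate that the paper itself describes as ``easily obtained''; the identification $J_p(du|_{\Omega'}) = \|S_{p/2}(du)\|^2_{L^2(\Omega')}$ via the singular values and the final $L^s$ bootstrap through the two-dimensional Sobolev embedding match Corollary~\ref{cormainredsh}. The genuine difficulty, which you correctly flag but do not resolve, is that a minimizer is only known to lie in $W^{1,p}$, so the equation cannot be differentiated and tested against $\eta^2 S_{p/2}(du)$. The paper's proof never differentiates the equation: it takes difference quotients along the isometry group of the target, comparing $u$ with $w_t = g_t^*u$, $g_t = \exp(ta)$, which are again exact solutions because the target is locally symmetric. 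Subtracting the two Euler--Lagrange equations and testing against a cut-off times $(w_t-u)$ (suitably projected) yields, via the non-standard pointwise inequality of Proposition~\ref{propineq1},
\[
|S_{p/2}(A) - S_{p/2}(B)|^2 \le p\,(S_{p-1}(A) - S_{p-1}(B); A - B)^\sharp,
\]
a uniform $L^2$ bound on $t^{-1}\bigl(S_{p/2}(dw_t)_u - S_{p/2}(du)\bigr)$, whence $S_{p/2}(du)\in H^1$. That inequality is the discrete surrogate for the chain-rule/spectral-calculus step in your plan, and the bulk of the paper's work (Propositions~\ref{propineq1}--\ref{propine8} and the term-by-term estimate of Proposition~\ref{propine9}) goes into making its constants, and those of the error terms caused by the moving tangent spaces inside $\R^{2,1}$, independent of $p$.

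The concrete gap in your proposal is the fallback offered for the degeneracy. Regularizing to an integrand $\mathrm{Tr}(Q^2+\varepsilon)^{p/2}$ only helps if (a) the minimizers of the regularized system are known to be smooth enough that the differentiated equation makes sense, and (b) they converge to $u_p$ strongly enough to transfer the $H^1$ bound in the limit $\varepsilon\to 0$. For this vector-valued, eigenvalue-anisotropic system neither step is off the shelf: the paper stresses that standard techniques (hole filling, the scalar $p$-Laplacian theory) fail precisely because the two singular values can be disparate, and even $C^{1,\alpha}$ regularity of $J_p$-minimizers is left open as Conjecture~\ref{highregu}. The authors explicitly considered the route you propose --- expanding to a one-parameter family known to have smooth solutions --- and judged it ``not clearly easier,'' opting for difference quotients instead. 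A further small correction: in the rigorous version the target-curvature contribution does not simply drop out by sign; it survives as terms $(III)$, $(X)$, $(XI)$ of Proposition~\ref{propine9}, is controlled by Proposition~\ref{morecomp1}, and is eliminated in the limit $t\to 0$ only after symmetrizing the inequality in $u$ and $w_t$.
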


For the rest of the paper following this theorem, we restrict ourselves to maps between hyperbolic surfaces. We also make the standing assumption that the best Lipschitz constant is at least one.
In order to follow the rest of the paper, it is necessary to absorb the structure of the canonical laminations $\lambda$ in $M$ and 
$\lambda^\land$ in $N$ determined by a homotopy class of maps. We refer the reader who did not find the description in the beginning of the introduction satisfactory to Definition~\ref{canlamin}, and the papers \cite{thurston} and  \cite{kassel}. In fact, the above papers papers contain a proof of the existence of these canonical laminations, and show that every best Lipschitz map must contain $\lambda$ in the set of points at which the best Lipschitz constant is taken on (=maximum stretch set).  Hence our infinity-harmonic maps contain $\lambda$ in their stretch set. Gueritaud-Kassel use the term {\it optimal best Lipschitz} for best Lipschitz maps whose maximum stretch set is $\lambda.$ Note that Thurston's stretch maps are not optimal, but Gueritaud-Kassel prove the existence of optimal best Lipschitz maps.

In Section~\ref{lipqgoesto1} we consider the limit $q \rightarrow1$. Here $q$ is the conjugate to $p$ satisfying $1/p+1/q=1$.
As in \cite{daskal-uhlen1}, we normalize $S_{p-1}$ and obtain a measure in the limit as $p$ goes to infinity. In this paper, we similarly consider the rescaled tensor
\[
S_{p-1}= S_{p-1}(\kappa_p du_p) = \kappa_p^{p-1}Q(du_p)^{p-2}du_p
\]
 for a normalizing factor $\kappa_p$, and $|S_p| = \kappa_p(S_p ;du_p)^\sharp=TrQ(du_p)^p$. The next theorem is a combination of Theorem~\ref{TTheorem A5} and Theorem~\ref{thm:limmeasures0}.

 \begin{theorem}\label{thm:limmeasures1694} Given a sequence $p \rightarrow \infty$, there exists a subsequence (denoted again by $p$), a real-valued positive Radon measure $|S|$, a Radon measure $S$ with values in $T^*M \otimes E$ and a  Radon measure $ V$ with values in $T^*M \otimes ad(E)$ such that
\begin{itemize}
\item $(i)$ $ |S_{p-1}| \rightharpoonup |S| $ and $\int_M |S|*1 = 1$
\item $(ii)$ $ S_{p-1} \rightharpoonup S$ and $V_q \rightharpoonup  V$    
\item $(iii)$ The total masses of $S$ and $V$ are  one and two respectively
\item $(iv)$ $ S=S_u$ and $V=V_u$ 
\item $(v)$The supports of $S$, $V$ are equal and contained in the support of $|S|$ 
\item $(vi)$ $V$ is closed with respect to the flat connection  on $ad(E)$.
\end{itemize}  
\end{theorem}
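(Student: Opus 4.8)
The plan is to carry out the infinity-harmonic strategy at the level of the dual objects: produce the limiting measures by a compactness argument after the right rescaling, then read off their algebraic and variational properties by passing the corresponding exact properties of the approximants $u_p$ to the limit. Write $\tilde\sigma_1,\tilde\sigma_2$ for the singular values of $\kappa_p du_p$ and take the normalizing factor to be $\kappa_p=J_p(u_p)^{-1/p}$, so that the scalar density is $|S_{p-1}|=\tilde\sigma_1^{\,p}+\tilde\sigma_2^{\,p}$ with $\int_M|S_{p-1}|*1=\kappa_p^{\,p}J_p(u_p)=1$, while the pointwise bundle norm is $|S_{p-1}|_{T^*M\otimes E}=(\tilde\sigma_1^{\,2p-2}+\tilde\sigma_2^{\,2p-2})^{1/2}$ and $(S_{p-1},du_p)^\sharp=\kappa_p^{-1}|S_{p-1}|$ by the definition of $|S_{p-1}|$. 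By Theorem~\ref{thm:existence}, $u_p\to u$ uniformly and weakly in $W^{1,s}$ with $u$ best Lipschitz; comparing $J_p(u_p)$ with $J_p$ of smooth near-optimal Lipschitz competitors gives $J_p(u_p)^{1/p}\to L$, hence $\kappa_p\to 1/L$, where $L\ge 1$ is the best Lipschitz constant. Items (i)--(iii) are then pure compactness. The $|S_{p-1}|$ are probability measures on the compact manifold $M$, so a subsequence satisfies $|S_{p-1}|\rightharpoonup|S|$ for a positive Radon measure $|S|$; testing against $1\in C(M)$ gives $\int_M|S|*1=1$, which is (i). For the mass bounds, from $\tilde\sigma_i^{\,2p-2}\le(\tilde\sigma_1^{\,p}+\tilde\sigma_2^{\,p})^{(p-2)/p}\tilde\sigma_i^{\,p}$ one gets the pointwise estimate $|S_{p-1}|_{T^*M\otimes E}\le|S_{p-1}|^{(p-1)/p}$, and H\"older's inequality then gives $\int_M|S_{p-1}|_{T^*M\otimes E}*1\le(\int_M|S_{p-1}|*1)^{(p-1)/p}\operatorname{Vol}(M)^{1/p}=\operatorname{Vol}(M)^{1/p}\to1$; since $V_q=*(S_{p-1}\times u_p)$ and the cross product with the bounded section $u_p$ changes the pointwise norm only by a bounded factor, $V_q$ likewise has uniformly bounded mass. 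A further subsequence gives $S_{p-1}\rightharpoonup S$ in $T^*M\otimes E$ and $V_q\rightharpoonup V$ in $T^*M\otimes ad(E)$ by Banach--Alaoglu, and lower semicontinuity of mass under weak-$*$ convergence yields (ii) and (iii).

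For item (iv), note that for each finite $p$ the equalities $S=S_u$, $V=V_u$ hold tautologically: $S_{p-1}=\kappa_p^{\,p-1}Q(du_p)^{p-2}du_p$ is pointwise $du_p$ post-composed with a positive symmetric operator, hence valued in the positive part $u_p^{-1}TN\subset E$ determined by $u_p$, and $V_q$ is valued in $\mathfrak p_{u_p}\subset ad(E)$ since $[\mathfrak p,\mathfrak k]\subset\mathfrak p$. Because $u_p\to u$ uniformly and the orthogonal projections onto these positive subbundles depend continuously on the map, the projections $\pi_{u_p}$ converge uniformly to $\pi_u$; multiplying uniformly convergent continuous bundle endomorphisms against weak-$*$ convergent measures gives $\pi_u S=S$ and $\pi_u V=V$, which is (iv). For item (v): running the estimate above on an arbitrary open set $U$ gives $\int_U|S_{p-1}|_{T^*M\otimes E}*1\le(\int_U|S_{p-1}|*1)^{(p-1)/p}\operatorname{Vol}(U)^{1/p}$, so $S$, and (since $|V_q|_{T^*M\otimes ad(E)}$ is pointwise comparable to $|S_{p-1}|_{T^*M\otimes E}$) also $V$, vanishes on every open set where $|S|$ does; hence $\operatorname{supp}S=\operatorname{supp}V\subseteq\operatorname{supp}|S|$. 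For the reverse inclusion one uses the calibration identity $(S_{p-1},du_p)^\sharp*1=\kappa_p^{-1}|S_{p-1}|$ — a positive density — which passes to the limit as a measure ($\kappa_p^{-1}\to L$, $|S_{p-1}|\rightharpoonup|S|$), together with the structure $S=S_u$, to conclude that $S$ cannot vanish on an open set carrying mass of $|S|$; this gives $\operatorname{supp}|S|\subseteq\operatorname{supp}S$ and completes (v).

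Item (vi) has two parts. Closedness of $V$ is immediate from the Noether identity of Theorem~\ref{Prop:Elag-bund}: $dV_q=-d*(S_{p-1}(du_p)\times u_p)=0$ in the distribution sense with respect to the flat connection on $ad(E)$ for every $p$, and this passes to the weak-$*$ limit, giving $dV=0$. For mass-minimality, recall from Theorems~\ref{TTheorem A6} and~\ref{TTheorem A66} that, for each $p$, $\xi=0$ minimizes $\|Z_p+(d\xi)_{u_p}\|_{sv^q}^q$ over sections $\xi$ of $u_p^{-1}TN$ and $\psi=0$ minimizes $\|V_q+(d\psi)_{u_p}\|_{sv^q}^q$ over sections $\psi$ of $\mathfrak p_{u_p}$, where $Z_p=*S_{p-1}$ and $q=p/(p-1)\to 1$; note that by the choice of $\kappa_p$ and $(p-1)q=p$ one has $\|Z_p\|_{sv^q}^q=\int_M(\tilde\sigma_1^{\,p}+\tilde\sigma_2^{\,p})*1=1$. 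Since, under this normalization, the $sv^q$-norms converge to the mass norm as $q\to1$, and since $u_p\to u$ uniformly forces $(d\xi)_{u_p}\to(d\xi)_u$ (so that $Z_p+(d\xi)_{u_p}\rightharpoonup Z+(d\xi)_u$, and similarly for $V$), lower semicontinuity of mass lets the variational inequalities pass to the limit as $\operatorname{mass}(Z+(d\xi)_u)\ge\operatorname{mass}(Z)$ and $\operatorname{mass}(V+(d\psi)_u)\ge\operatorname{mass}(V)$ — that is, $Z=*S$ and $V$ are mass minimizing with respect to $u$, which completes (vi).

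I expect the main obstacle to lie in the reverse support inclusion in (v) and, relatedly, in passing products like $(S_{p-1},du_p)^\sharp$ to the limit. The measures $S_{p-1}$ converge only weakly-$*$ while the differentials $du_p$ converge only weakly in $W^{1,s}$, so a priori the energy density $|S_{p-1}|$ could survive in the limit on a set where $S_{p-1}$ itself oscillates and cancels, so that $|S|$ charges points outside $\operatorname{supp}S$; and this does not follow merely from the $L^s$-regularity of Theorem~\ref{mainregtH}. Ruling it out requires showing that near $\operatorname{supp}|S|$ the $S_{p-1}$ are coherently aligned with the stretch directions of $u_p$ — equivalently, that $du$ admits a good $|S|$-a.e.\ representative against which $S$ calibrates — which is where the geometry of the best Lipschitz map enters and where most of the genuine analytic work (the normalization estimates combined with a no-cancellation argument) is concentrated. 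The double limit in $p$ and $q$ in part (vi), and the care needed to keep the limiting competitor class exactly $\{Z+(d\xi)_u\}$ rather than something larger, are secondary but nontrivial points.
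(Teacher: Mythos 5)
Your normalization, the compactness/diagonalization giving (i)--(iii), the projection argument for (iv), and the easy inclusion $\operatorname{supp}S\subseteq\operatorname{supp}|S|$ all match the paper (Theorems~\ref{TTheorem A5} and~\ref{thm:limmeasures0}), and the closedness of $V$ as a weak limit of closed currents is also how the paper argues. But the two places you yourself identify as delicate are genuine gaps, and in both cases the missing ingredient is the same: the Euler--Lagrange equation for $u_p$ tested against a \emph{comparison Lipschitz map}. For the reverse inclusion $\operatorname{supp}|S|\subseteq\operatorname{supp}S$ you propose a ``calibration / no-cancellation'' argument for the product $(S_{p-1},du_p)^\sharp$, but you do not carry it out, and as you note it cannot be extracted from weak convergence alone. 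The paper (Theorem~\ref{theoremsuptharddir}) never passes a product of two weakly convergent sequences to the limit: on a ball $B$ where $S_{p-1}\to 0$ in $L^1$, it uses Proposition~\ref{supptprop5} to replace $du_p$ by the fixed field $F=\omega(u_p,f)^{-1}(df)_{u_p}$ built from a comparison best Lipschitz map $f$, up to an error supported on the bad set $G_p$ which Proposition~\ref{supptprop8} shows is $O(1/p)$; then $\langle|S_{p-1}|\rangle_B\leq \kappa_p\langle S_{p-1},F\rangle_B+O(1/p)\leq K\|S_{p-1}\|_{L^1(B)}+O(1/p)\to 0$. (A second proof interpolates using the $H^1$ bound on $S_{p/2}$ from Theorem~\ref{mainregtH}.) Either route must be supplied; your sketch does not.

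The mass-minimality argument in (vi) also does not close as written, because semicontinuity points the wrong way. Lower semicontinuity of mass under weak-$*$ convergence gives $\operatorname{mass}(Z+(d\xi)_u)\leq\liminf_p\|Z_p+(d\xi)_{u_p}\|_{sv^q}^q$, i.e.\ an \emph{upper} bound on the limiting competitor, whereas you need a \emph{lower} bound on $[[Z+d\psi]]_u$ (and for arbitrary BV competitors $\psi$, not only smooth $\xi$ pushed through the approximation). The paper instead proves the two one-sided bounds $[[Z]]_u\leq 1$ and $[[Z+d\psi]]_u\geq 1$ directly: it takes smooth Lipschitz approximations $w_k\to u$ with $L_{w_k}\to L$ (Theorem~\ref{thmgreenewu}), tests against $\xi_k=h_k\,dw_k$ with $h_k=s((dw_k)_u)^{-1}$, kills the exact term via $\langle d\psi,*dw_k\rangle=0$ (integration by parts against the closed form $dw_k$, plus a vanishing correction from $(dw_k)_u-dw_k$), and bounds $Z[dw_k]\geq L$ from below using Proposition~\ref{supptprop5} with $f=w_k$ together with $\kappa_p\to L^{-1}$. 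Without this integration by parts and the Euler--Lagrange lower bound, the variational inequality at finite $p$ does not transfer to the limit. A minor further point: your ``bounded factor'' comparison of $V_q$ with $S_{p-1}$ does not pin down the constant $2$ in (iii); that constant comes from the identity $\langle S_{p-1}\times u_p,\xi_{u_p}\times u_p\rangle=2\langle S_{p-1},\xi_{u_p}\rangle$ of Proposition~\ref{helpemb}.
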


In the above mass means total variation adapted to our situation. For further details we refer to Definitions~\ref{measrcurr} and~\ref{measrcurr2}.

A similar limiting construction can be done for the Noether currents coming from the domain symmetries. Let $F$ be the flat bundle $F=\tilde M \times_\sigma \R^{2,1}$, where $\sigma: \pi_1(M) \rightarrow SO^+(2,1)$ defines the hyperbolic structure on $M$. Then, there exist Radon measures $T$ and $W$ with values respectively in $T^*(M) \otimes F$ and $T^*(M) \otimes ad (F)$ which are the weak limits of the (appropriately rescaled)
tensors $T_q = (S_{p-1}(du_p), du_p)^\sharp $ and $W_q= T_q \times id$:   

\begin{theorem}\label{thm:limmeasures4501}  Given a sequence $p \rightarrow \infty$ ($q \rightarrow 1$), there exists a subsequence (denoted again by $\{p\}$) 
and  Radon measures $T$ and $W$ with values in $Sym^2(T^*M)$, $T^*M \otimes ad(F)$ respectively such that after normalizing as above:
\begin{itemize}
\item $(i)$  $ T_q \rightharpoonup T$, $W_q=T_q \times id \rightharpoonup W$ 
\item $(ii)$ $dW=0$  with respect to the flat connection on $ad(F)$ and  $W_{id}=W$
\item $(iii)$ $Tr_gT=|S|$ and $*(\omega_{mc}, W)^\sharp=2|S|$
\item $(iv)$The supports of  $T$, $W$ and $|S|$ are equal.
\item $(v)$ The total masses of $T$ and $W$ are  one and two respectively. 
\end{itemize}
\end{theorem}
Primitives $w$ can be found for $W,$ just as for $V$. Here $w$ also represents a local Lie algebra valued function of bounded variation. We will investigate the global properties of $v$ and $w$ in connection with transverse measures in the next paper \cite{daskal-uhlen2}.

In Section~\ref{sec7} we relate the  support of the measures $|S|$, $S$, $V$, $T$ and $W$ to the canonical lamination.
Recall our remarks about the canonical lamination $\lambda$ and its image $\lambda^\land.$ The existence of the measures  is in itself not interesting unless we know some useful geometric properties of them. In this section, we show that their supports are on the geodesic lamination $\lambda.$

 The next theorem is one of the main results of the paper. It is  proved by comparison with  optimal best Lipschitz maps and  says that the supports of the measures are on the canonical $\lambda$, not just in the maximum stretch set of $u.$ 
 
 \begin{theorem} 
The support of the measure $|S|$  is contained in the canonical  lamination  $\lambda$ associated to the hyperbolic metrics on $M$, $N$ and the homotopy class.
\end{theorem}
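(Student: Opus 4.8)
The plan is to prove the sharper statement $\operatorname{supp} S \subseteq \lambda$, and then to read off the claim for $V$ from the equality of supports $\operatorname{supp} S = \operatorname{supp} V = \operatorname{supp} |S|$ (item (v) of the combined theorem of Section~\ref{lipqgoesto1}). Write $L\ge 1$ for the best Lipschitz constant in the homotopy class, $Z = {*}S$, and $s(dw)$ for the largest singular value of $dw$. The one external input is the structure theory of Thurston and Gu\'eritaud--Kassel (cf. \cite{thurston}, \cite{kassel}, and Definition~\ref{canlamin}): there is an \emph{optimal} best Lipschitz map $\bar u: M\to N$ in the homotopy class whose maximum stretch set is exactly $\lambda$, so that $s(d\bar u) < L$ off $\lambda$ and, on any compact $K\subset M\setminus\lambda$, $s(d\bar u)\le L-\delta$ a.e.\ for some $\delta=\delta(K)>0$. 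Since the supports in question are closed, it suffices to show $|S|(K)=0$ for every such $K$.

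First I would set up the comparison pairing in the flat $SO(2,1)$-bundle picture of Section~\ref{sec3}. Via the hyperboloid embedding $N\hookrightarrow E$, every map $w$ in the homotopy class is an $E$-valued equivariant function on $\widetilde M$; $dw$ is a section of $T^*M\otimes E$, $V$ takes values in $T^*M\otimes ad(E)$, and $ad(E)=\mathfrak{so}(2,1)$ acts infinitesimally on $N$, so $V\times w$ is a $w^{-1}TN$-valued one-form with ${*}(V\times u) = \pm Z$. For a best Lipschitz map $w$ put $P(w)=\int_M (dw,\,{*}(V\times w))$. By item (iv) we have $S=S_u$, and by the mass-minimizing property of $Z$ and $V$ (Theorem~\ref{thm:limmeasures04r}) together with the normalization $\int_M |S|*1=1$ of item (i), the map $du$ \emph{calibrates} $Z$: at $|S|$-a.e.\ point $s(du)=L$ and $du$ is aligned with $S$, so that $P(u)=\int_M(du,Z)=L\cdot\operatorname{mass}(Z)=:c>0$, the positivity because $\operatorname{supp}|S|\neq\emptyset$.

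The heart of the proof is the homotopy invariance $P(w)=c$ for every best Lipschitz $w$ homotopic to $u$. For smooth maps this is a direct computation: along a smooth homotopy $w_t$, the two bulk terms of $\tfrac{d}{dt}P(w_t)$ --- one from varying $w$ in the factor $dw$, one from varying it in the factor $V\times w$ --- cancel after a single integration by parts, provided one uses that $V$ is closed for the flat connection on $ad(E)$ (item (vi)); the remaining boundary term vanishes for closed $M$, and in the bordered case by the boundary conditions ($\dot w_t=0$ on $\partial M$ for the Dirichlet problem, vanishing of the normal part of $V$ for the Neumann problem). \textbf{This is the main obstacle}: $V$, hence $Z$, is in general a singular measure while a best Lipschitz map is only Lipschitz, so both the pairing $P(\bar u)$ and the integration by parts above must be justified at this regularity. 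I expect to handle it by one of two routes --- either transfer all derivatives onto a local potential $v$ with $dv=V$ (Section~\ref{sec3}), rewriting the integrand so that it pairs $v\in L^1_{\mathrm{loc}}$ against the bounded quantities $dw$ and $dw\wedge dw$, and differentiating the homotopy there; or approximate $\bar u$ by its own $J_p$-minimizers $\bar u_p$ (Theorem~\ref{thm:existence}), work with $\int_M(d\bar u_p,\,{*}(V_q\times\bar u_p))$ along the regularized pair, and pass to the limit using the weak convergences $S_{p-1}\rightharpoonup S$, $V_q\rightharpoonup V$ of Section~\ref{lipqgoesto1}.

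Granting the homotopy invariance, the conclusion is a pointwise comparison. Apply it to $w=\bar u$: since $d\bar u$ satisfies only the calibration \emph{inequality} $(d\bar u,{*}(V\times\bar u))\le s(d\bar u)\,|{*}(V\times\bar u)|$ while $du$ calibrates $Z$ with equality $|S|$-a.e., and since $s(d\bar u)\le L$ everywhere with a strict gap $L-\delta$ on $K$, one concludes $c=P(\bar u)\le c-\delta\cdot(\text{positive multiple of }|S|(K))$, which forces $|S|(K)=0$. As $K\subset M\setminus\lambda$ was arbitrary, $\operatorname{supp}|S|\subseteq\lambda$, and by item (v) $\operatorname{supp}S=\operatorname{supp}V=\operatorname{supp}|S|\subseteq\lambda$. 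Observe that the argument uses $\bar u$ only through the strict inequality $s(d\bar u)<L$ on $M\setminus\lambda$; the generally larger maximum stretch set of $u$ itself never enters, which is precisely the point of comparing with the optimal map rather than with $u$.
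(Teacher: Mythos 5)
Your overall strategy -- compare against an optimal (or at least a suitably chosen) best Lipschitz map $\bar u$ and exploit the strict gap $s(d\bar u)\le L-\delta$ on a compact set $K\subset M\setminus\lambda$ -- is the same as the paper's, but the machinery you propose to implement it has a genuine gap at its crux. The paper never pairs the limiting measures against a comparison map at all: it runs the comparison at \emph{finite} $p$, where everything is an honest $L^1$ integrand. Concretely, it uses the Euler--Lagrange identity $\langle \omega(u_p,f)Q(U_p)^{p-2}U_p,\,du_p-F\rangle=0$ of Proposition~\ref{supptprop5} with $F=\omega(u_p,f)^{-1}(df)_{u_p}$, localizes to a ball $B$ disjoint from $\lambda_f$ by controlling the negativity set $G_p$ (Proposition~\ref{supptprop8}), and then the convexity inequality of Lemma~\ref{sptlemma7} together with $\kappa_p s(F)\le\hat\tau<1$ on $B$ produces the decisive decay $\hat\tau^{p/2}\,vol(M)/p\to 0$, giving $\langle Q(U_p)^p\rangle_B\to 0$, i.e.\ $|S|(B)=0$. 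The support statement for $V$ then follows from the equality of supports, as you say.

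The gap in your route is the homotopy invariance of $P(w)=\int_M(dw,{*}(V\times w))$, which you assert follows from $dV=0$ ``after a single integration by parts.'' It does not: writing $A_w=dw\times w$, the variation along a homotopy is $\tfrac{d}{dt}A_{w_t}=d(\dot w_t\times w_t)+c\,(dw_t\times\dot w_t)$, and only the exact piece is killed by the closedness of $V$; the term $\int V\wedge(dw_t\times\dot w_t)$ survives and there is no evident reason for it to vanish (indeed $A_w$ is not $d$-closed -- cf.\ (\ref{flateqn}) -- so one is not pairing cohomology classes). Without homotopy invariance the comparison $P(\bar u)=P(u)$ collapses. Your two proposed repairs do not close this: route (b) is circular, because the $J_p$-minimizers depend only on the homotopy class and converge to the infinity harmonic map $u$, not to $\bar u$, so ``approximating $\bar u$ by its own $J_p$-minimizers'' gives no information about $d\bar u$; route (a) would require an Anzellotti-type pairing theory between the singular measure $V$ and Lipschitz data that is not developed here. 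There are also secondary unproved claims (the $|S|$-a.e.\ pointwise alignment of $du$ with $S$, and the control of the projection mismatch $\omega(u,\bar u)$ in $|{*}(V\times\bar u)|$ versus $|S|$). I would recommend abandoning the $p=\infty$ calibration identity and instead carrying out the comparison at finite $p$ as above, where the strict inequality $s(F)<1$ can be raised to the $p$-th power before any limit is taken.
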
 
Together with Theorems~\ref{thm:limmeasures1694} and~\ref{thm:limmeasures4501}, it follows  that the supports of  $S$,  $V$, $T$ and $W$ are contained in the canonical  lamination.
  
 Here is a list of the topics which will be addressed in the sequel paper \cite{daskal-uhlen2}:\\
First,  is the global description on the measures $V$, $W$ and their primitives. The homology classes of  $V$ and $W$ are elements of $H^1(M, ad(E))$ and $H^1(M, ad(F))$ respectively. The local primitives $v$, $w$ satisfying $V = dv$, $W= dw$ form global  sections of affine bundles with
linear structure $ad(E)$ and $ad(F)$, have bounded variation and, as a consequence of the support theorem, are constant on the plaques of the canonical lamination. In other words, $V$ and $W$ can be realized as transverse measures with values in a flat Lie algebra bundle and their primitives as  transverse cocycles (cf. \cite{bonahon}). However,  unlike Bonahon's transverse cocycles,  $v$ and $w$ are not invariant under deck transformations. They satisfy globally a twisted affine equivariance condition (composition of the adjoint representation and a translation in the Lie algebra). 

The 1-currents  $W$, $V$ can be described in terms of the geodesic flow on the lamination $\lambda$ and an induced transverse measure $\mu$  on $\lambda$ \footnote{in the topology literature (cf. \cite[Chapter 8]{thurston2}) it is customary to require that transverse measures are of full support; in our convention a transverse measure could be zero on  part of the lamination.}.    More precisely,  there exists a transverse measure $\mu$ on $\lambda$ such that $W=Bd\mu$ and $V=B^\land(u) d\mu$ where $B$, $B^\land$ denotes the geodesic flow of $\lambda$, $\lambda^\land=u(\lambda)$ respectively.
Moreover,  the masses (total variation) of $W=dw$ and $V=dv$ computed using the hyperbolic metric are proportional to the length of the laminations. 

Note that $H^1(M, ad(F))$ and $H^1(M, ad(E))$ can be identified with the tangent spaces to the character variety, or equivalently the Teichm\"uller space at $M$, $N$.  Using the integral trace pairing (symplectic form), $W$ corresponds to the generator of the earthquake flow along the lamination $(\lambda, \mu).$  One way we see this is  by showing that it is dual to the derivative of the length functional of $\lambda$ with respect to the variation of the hyperbolic structures on $M.$  But we can also show this directly by constructing the  earthquake map. In other words, the vector field induced on $M$ by $W = dw$ with the equation  $\zeta (x) = w x$ is a Killing field away from $\lambda$ with a jump across this lamination.  This is an exact description of an earthquake map (cf. \cite{thurston3}).

 A similar picture holds for $V$, where $v$ with $dv = V$ describes the earthquake flow  along the image lamination $\lambda^\land=u(\lambda)$. The geometry of $M$ is identified with that of $N$ in a neighborhood of the laminations, the push-forward of the vector field $\zeta$ describes  a Killing vector field on the image $N$ with jumps across the lamination $u(\lambda)$. 
 
There are a number of  interesting open problems that we have not addressed in this paper: 

To begin, recall that in the case of maps to $S^1$ the local primitive $v$ of the closed 1-current $V$ is locally a function of least gradient (cf. \cite{daskal-uhlen1}). The level sets of a least gradient function are geodesics and this gives another way to recover the canonical lamination (cf. \cite{aidan}). It turns out that the map $v$ here also satisfies  a least gradient property, although it is  harder  to state.  As there is not a theory of least gradient vector valued functions readily available in the literature, we postpone this discussion to a future paper.

Another interesting problem is that of uniqueness and in most cases we cannot prove uniqueness.  Also, we  cannot show that our infinity-harmonic maps are optimal. We can  only give affirmative answers in the case that the laminations consist of closed geodesics. This will also be discussed elsewhere.

Because of the connection with Thurston theory, this paper treats only $G = SO(2,1)$.  The results in the beginning of the paper hold for all image manifolds $N$ (at least of non-positive curvature), but the later sections are written explicitly for $SO(2,1).$  There is considerable interest in allowing more general targets.  Our results go over with very little change to the case of $SO^+(n,1)/SO(n)$ and the Lipschitz constant $L > 1$.  For $L < 1$, the analysis carries over entirely. However, we do not know what replaces the canonical lamination $\lambda$; it is quite possible the analysis of $dv$ and $dw$, which are well-defined, will be helpful in determining this. 
 For $SL(n,R)/SO(n)$, one can, of course, define infinity-harmonic maps. Noether's theorem still applies, but the support theorem is more complicated to execute.  Again, the main obstacle is  the lack of a replacement for the canonical laminations.

{\bf Acknowledgements.}
We would like to thank Camillo De Lellis and  Athanase Papadopoulos for useful discussions during the preparation of this manuscript. We would also like to thank Krzysztof Ciosmak for suggesting  a possible connection between this paper and his work. \cite{cios1}, \cite{cios2}.

\section{Best Lipschitz maps and their $p$-approximations}\label{sect1}
In this section we introduce a new version of $p$-harmonic maps between Riemannian manifolds. They are  defined as  critical points of a functional $J_p$ given by the integral of the $p$-power of the Schatten norm of the  gradient. We first review some simple facts from linear algebra. These include the definition of the Schatten norms on the space of matrices and their basic properties. We next  study basic properties of the functional $J_p$, like convexity, in the case when the target manifold has non-positive curvature. In this case, we conclude existence and uniqueness of solutions the same way as for  harmonic maps. We postpone the rather difficult question about regularity  until Section~\ref{sect:regtheor}.
We conclude the section with the construction of a special type of best Lipschitz maps which we call infinity-harmonic. These maps are limits of minimizers of  $J_p$-functionals as $p \rightarrow \infty$.

\subsection{Preliminaries} 
Given $V_1$ and $V_2$  positive definite inner product  spaces and $A \in Hom(V_1, V_2)$ denote $A^T \in Hom(V_2, V_1)$ the adjoint. Here the inner products are used to identify the spaces with their duals. First note that for $ B \in Hom(V_1, V_2)$,
\begin{equation}\label{eqn:traceinn}
Tr(A^TB)=Tr(AB^T)=(A, B)
\end{equation}
is nothing but the trace inner product of the matrices $A$ and $B$. Denote by $|A|_2=(A, A)^{1/2}.$
Set
\[
Q(A)=(AA^T)^{1/2} \ \  \mbox{and} \ \  \mathcal Q(A)=(A^TA)^{1/2}. 
\]
Define by $s_1(A) \geq s_2(A) \geq ...\geq s_r(A) \geq 0$ 
their common eigenvalues  (singular values of $A$) where $r=\min\{\dim V_1, \dim V_2\}$.

%

\begin{definition}\label{schatten} Let $1 \leq p < \infty$ and $A \in Hom(V_1, V_2)$. Define the $p$-Schatten norm
\[
|A|_{sv^p}= \left( TrQ(A)^p \right)^{1/p}= \left( Tr\mathcal Q(A)^p \right)^{1/p}=\left( \sum_{i=1}^r s_i(A)^p \right)^{1/p}.
\] 
Extend the definition at $p=\infty$ by setting 
\[
|A|_{sv^\infty}=\sup_{|a|=1}|A(a)|
\]
the operator norm of $A$. Equivalently,
$|A|_{sv^\infty}= s_1=s_1(A)$
  is the largest singular value of $A$. For convenience we will denote $s_1(A)$ simply by $s(A)$.
 \end{definition}
 
 The next proposition  lists some fairly standard properties of Schatten norms found in the literature (see for example \cite{bhatia}).   
\begin{proposition}\label{shat1}
\begin{itemize}
\item $(i)$ For $1/p+1/q=1/r, \ p,q,r \in (1, \infty)$ 
\[
|AB^T|_{sv^r} \leq |A|_{sv^p}|B|_{sv^q} 
\]
\[
|Tr(AB^T)| \leq |A|_{sv^p}|B|_{sv^q}
\]
\item $(ii)$ For $1  \leq p \leq q \leq \infty$,
\[
|A|_{sv^1} \geq |A|_{sv^p}  \geq |A|_{sv^q}  \geq |A|_{sv^\infty}
\]
\[
\lim_{p \rightarrow \infty}|A|_{sv^p}=|A|_{sv^\infty}.
\]
\end{itemize}
\end{proposition}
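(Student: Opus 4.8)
## Proof proposal

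The plan is to prove all four parts by reducing to the scalar singular-value inequalities via the common diagonalization afforded by the singular value decomposition. Recall that for $A \in Hom(V_1,V_2)$ with singular values $s_1(A)\geq\cdots\geq s_r(A)\geq 0$, one has $|A|_{sv^p}=\left(\sum_i s_i(A)^p\right)^{1/p}$ for $p<\infty$ and $|A|_{sv^\infty}=s_1(A)$, so each norm is an $\ell^p$-norm of the singular-value vector. This is the organizing principle.

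First I would dispatch $(iii)$ and $(iv)$, since they involve only one operator. For $(iii)$, the chain $|A|_{sv^1}\geq|A|_{sv^p}\geq|A|_{sv^q}\geq|A|_{sv^\infty}$ for $1\leq p\leq q\leq\infty$ is exactly the monotonicity of $\ell^p$-norms in $p$ on a fixed finite vector, applied to $(s_1(A),\dots,s_r(A))$: indeed $\left(\sum s_i^q\right)^{1/q}\leq\left(\sum s_i^p\right)^{1/p}$ whenever $p\leq q$, and $\sup_i s_i=s_1(A)=|A|_{sv^\infty}$. For $(iv)$, the limit $\lim_{p\to\infty}\left(\sum_{i=1}^r s_i^p\right)^{1/p}=\max_i s_i=s_1(A)$ is the standard fact that $\ell^p$-norms on a finite-dimensional space converge to the $\ell^\infty$-norm; one squeezes via $s_1\leq\left(\sum s_i^p\right)^{1/p}\leq r^{1/p}s_1$ and lets $r^{1/p}\to1$.

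For $(i)$ and $(ii)$ I would invoke the classical fact that the singular values of a product satisfy the \emph{Horn--Weyl multiplicative majorization}: $\prod_{i=1}^k s_i(AB^T)\leq\prod_{i=1}^k s_i(A)s_i(B)$ for each $k$, equivalently $s_i(AB^T)$ is weakly log-majorized by the decreasing rearrangement of $(s_i(A)s_i(B))_i$. Combined with the fact that $t\mapsto t^r$ applied to a log-majorized sequence preserves weak majorization, this yields $\sum_i s_i(AB^T)^r\leq\sum_i (s_i(A)s_i(B))^r$, and then the scalar Hölder inequality with exponents $p/r$ and $q/r$ (so that $r/p+r/q=1$) gives $\sum_i(s_i(A)s_i(B))^r\leq\left(\sum_i s_i(A)^p\right)^{r/p}\left(\sum_i s_i(B)^q\right)^{r/q}$; taking $r$-th roots proves $(i)$. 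For $(ii)$, observe $|Tr(AB^T)|=|(A,B)|\leq\sum_i s_i(A)s_i(B)$ — this last inequality being von Neumann's trace inequality, $|Tr(AB^T)|\leq\sum_i s_i(A)s_i(B)$ — and then apply scalar Hölder with exponents $p,q$ directly to the sequences $(s_i(A))$ and $(s_i(B))$. Alternatively $(ii)$ follows from $(i)$ with $r=1$ together with $|Tr(C)|\leq|C|_{sv^1}$.

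The main obstacle, and the only non-elementary ingredient, is the multiplicative majorization $\prod_{i=1}^k s_i(AB^T)\leq\prod_{i=1}^k s_i(A)s_i(B)$ needed for $(i)$ when $r>1$: unlike the one-operator statements, it is not a consequence of a single SVD but requires the minimax (Courant–Fischer) characterization of products of singular values, i.e. $\prod_{i=1}^k s_i(C)=\max\{|\det(C|_V)|: \dim V=k\}$ up to the appropriate interpretation, and the submultiplicativity of determinants of restrictions. I expect to cite this (e.g. from Bhatia, \emph{Matrix Analysis}, or Gohberg–Krein) rather than reprove it. Everything else reduces to scalar Hölder and $\ell^p$-monotonicity on a fixed finite sequence, which I would treat as routine and not write out in detail. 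One should also note the endpoint cases $p=\infty$ (hence $r=q$ in $(i)$, or $q=1$ in $(ii)$) are handled by the same majorization with $\ell^\infty$ replacing the corresponding sum, and by $(iv)$ if one prefers a limiting argument.
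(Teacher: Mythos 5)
Your proposal is correct and follows essentially the same route as the paper: parts $(iii)$ and $(iv)$ are handled identically (monotonicity and limit of $\ell^p$-norms of the fixed singular-value vector), and for $(i)$ and $(ii)$ the paper simply cites Bhatia, whose proof of those inequalities is precisely the Horn multiplicative majorization plus scalar H\"older (resp.\ the trace inequality) that you sketch. No substantive difference.
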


It is worth noting that property $(ii)$, though elementary, is the starting point  of this paper. 


In this paper we will use the induced norms  on spaces of sections of vector bundles. More precisely, let $V_1, V_2$ be Riemannian vector bundles over a Riemannian manifold $(M,g)$ and $A: M \rightarrow Hom(V_1,V_2)$ a section. Define the $p$-Schatten norm of $A$
\[
||A||_{sv^p}=\left(\int_M |A|^p_{sv^p}*1\right)^{1/p}\ \ \mbox{for} \ \ 1\leq p < \infty
\]
and
\[
||A||_{sv^\infty}= \sup_M |A|_{sv^\infty}.
\]

The proofs of the following  Proposition and Lemma are elementary:

\begin{proposition}\label{ineqscoten} The norm $||.||_{sv^p}$  is equivalent to the $L^p$ norm. More precisely,
\begin{equation}\label{lemma:normineq0}
\frac{1}{\sqrt r} ||A||_{L^p} \leq ||A||_{sv^p} \leq  ||A||_{L^p}
\end{equation}
where $r=\min \{\dim V_1, \dim V_2 \}$. Furthermore, for sections $A$, $B$ and $1/p+1/q=1$,  $p, q \in (1, \infty)$
\begin{eqnarray}\label{holds}
||AB^T||_{sv^1} \leq ||A||_{sv^p}||B||_{sv^q} \ \mbox{and} \ \left| \int_M Tr(AB^T) \right| \leq ||A||_{sv^p}||B||_{sv^q}.
\end{eqnarray}
\end{proposition}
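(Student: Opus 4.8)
The plan is to reduce everything to the pointwise inequalities in Proposition~\ref{shat1} together with the pointwise norm comparison $\frac{1}{\sqrt r}|A|_{L^p}\le|A|_{sv^p}\le|A|_{L^p}$ at each point of $M$, and then integrate. The first task is to establish this pointwise comparison. Fix a point and let $s_1\ge\cdots\ge s_r\ge 0$ be the singular values of $A$ there. By definition $|A|_{sv^p}^p=\sum_i s_i^p$, while the trace-norm (Hilbert--Schmidt) quantity is $|A|_{L^p}$, whose value at the point is $\big(\sum_i s_i^2\big)^{1/2}$; here I am using that the $L^p$-norm of a section is built from the pointwise Hilbert--Schmidt norm $|A|_2=(A,A)^{1/2}=\big(\sum_i s_i^2\big)^{1/2}$ via $\|A\|_{L^p}=\big(\int_M|A|_2^p*1\big)^{1/p}$. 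So pointwise I need $\frac{1}{\sqrt r}\big(\sum_i s_i^2\big)^{1/2}\le\big(\sum_i s_i^p\big)^{1/p}\le\big(\sum_i s_i^2\big)^{1/2}$, i.e. $\frac{1}{\sqrt r}\,|s|_{\ell^2}\le|s|_{\ell^p}\le|s|_{\ell^2}$ for the vector $s=(s_1,\dots,s_r)$. For $p\ge 2$ the right-hand inequality is the standard monotonicity $\ell^p\subseteq\ell^2$ with $|s|_{\ell^p}\le|s|_{\ell^2}$, and the left-hand one follows from $|s|_{\ell^2}\le r^{1/2-1/p}|s|_{\ell^p}\le r^{1/2}|s|_{\ell^p}$ (Hölder / power-mean); for $1\le p\le 2$ the roles reverse and one still gets both bounds with the same constant $\sqrt r$, since $r^{|1/2-1/p|}\le\sqrt r$ always. (Since the paper uses $p>n=\dim M\ge 2$ one could even restrict to $p\ge 2$, but the uniform constant $\sqrt r$ works in all cases.) Raising to the $p$-th power and integrating over $M$ then gives \eqref{lemma:normineq0}, and that $\|\cdot\|_{sv^p}$ is a norm follows because it is sandwiched between two constant multiples of a norm and is itself positively homogeneous; the triangle inequality for $\|\cdot\|_{sv^p}$ is Minkowski's inequality applied to the function $x\mapsto|A(x)|_{sv^p}$ in $L^p(M)$ combined with the pointwise triangle inequality for $|\cdot|_{sv^p}$ (which is part of the assertion that $|\cdot|_{sv^p}$ is a norm, already standard — see e.g. \cite{bhatia}).

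For \eqref{holds}, I would argue as follows. At each point, Proposition~\ref{shat1}$(i)$ with $r=1$ gives $|AB^T|_{sv^1}\le|A|_{sv^p}\,|B|_{sv^q}$, where $1/p+1/q=1$. Integrating over $M$ and applying Hölder's inequality in $L^p(M)\times L^q(M)$ to the functions $x\mapsto|A(x)|_{sv^p}$ and $x\mapsto|B(x)|_{sv^q}$ yields
\[
\|AB^T\|_{sv^1}=\int_M|AB^T|_{sv^1}*1\le\int_M|A|_{sv^p}|B|_{sv^q}*1\le\Big(\int_M|A|_{sv^p}^p*1\Big)^{1/p}\Big(\int_M|B|_{sv^q}^q*1\Big)^{1/q}=\|A\|_{sv^p}\|B\|_{sv^q},
\]
with the obvious modifications when $p=\infty$ or $q=\infty$ (replace the corresponding $L^p$-factor by an $L^\infty$-bound and use the pointwise operator-norm estimate). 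For the second inequality in \eqref{holds}, note $\big|\int_M Tr(AB^T)\big|\le\int_M|Tr(AB^T)|*1$, then apply the pointwise bound $|Tr(AB^T)|\le|A|_{sv^p}|B|_{sv^q}$ from Proposition~\ref{shat1}$(ii)$, and finish with the same Hölder step in $L^p(M)\times L^q(M)$.

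I do not expect a genuine obstacle here: the statement is a routine "integrate the pointwise inequality, then apply Hölder on $M$" argument, and all the pointwise linear-algebra inputs are already recorded in Proposition~\ref{shat1}. The only points requiring a little care are bookkeeping ones — getting the constant in \eqref{lemma:normineq0} to be genuinely uniform over all $p\in[1,\infty]$ (the exponent $|1/2-1/p|$ is maximized over $p$ on $[1,\infty]$ by the endpoint/limiting cases, giving $\le\sqrt r$), and handling the endpoint exponents $p=1,\infty$ (resp. $q=\infty,1$) in \eqref{holds} where Hölder degenerates to an $L^1$–$L^\infty$ pairing. Neither is serious.
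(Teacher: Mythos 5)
Your proposal is correct and follows essentially the same route as the paper: reduce \eqref{lemma:normineq0} to the pointwise comparison of $\ell^p$ and $\ell^2$ norms of the singular value vector, get subadditivity by the Minkowski/H\"older manipulation on $x\mapsto|A(x)|_{sv^p}$, and obtain \eqref{holds} by integrating the pointwise bounds of Proposition~\ref{shat1} and applying H\"older on $M$. One small remark: your discussion of $1\le p<2$ is actually more careful than the paper's (whose step $\sum_i s_i^p\le(\sum_i s_i^2)^{p/2}$ tacitly requires $p\ge2$), but note that for $p<2$ the two one-sided inequalities in the display \eqref{lemma:normineq0} genuinely reverse direction, so what survives uniformly in $p$ is the equivalence with constant $\sqrt r$ rather than the literal chain as written — harmless here since the estimate is only invoked for large $p$.
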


\begin{lemma}\label{lemma:normlim} Let $V_1, V_2$ be Riemannian vector bundles of dimension $r$ over a Riemannian manifold $(M,g)$ and $A: M \rightarrow Hom(V_1,V_2)$ a section. Then
\begin{itemize}
\item $(i)$
\[
\lim_{p \rightarrow \infty} ||A||_{sv^p}= ||A||_{sv^\infty}=s(A).
\]
\item $(ii)$ If $1 \leq s<p<\infty$, then
\[
\frac{1}{(r \ vol(M))^{1/s}}||A||_{sv^s}\leq \frac{1}{vol(M)^{1/p}} ||A||_{sv^p}.
\]
\end{itemize}
\end{lemma}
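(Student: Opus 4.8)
The plan is to prove both parts of Lemma~\ref{lemma:normlim} by reducing everything to the pointwise facts already recorded in Proposition~\ref{shat1}$(iii)$,$(iv)$ together with elementary $L^p$ estimates on the finite measure space $(M,g)$.

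For part $(i)$, first observe that the lower bound is immediate: for any $p$, by Proposition~\ref{shat1}$(iii)$ applied pointwise we have $|A|_{sv^\infty}(x) \leq |A|_{sv^p}(x)$ for a.e.\ $x$, and since the left side is independent of $p$ while integration against $*1$ followed by the $p$-th root is monotone, one gets $\|A\|_{sv^\infty} = \operatorname{esssup} |A|_{sv^\infty} \leq \liminf_{p\to\infty} \|A\|_{sv^p}$. (A cleaner route: $\|A\|_{sv^p} \geq \|A\|_{L^p}/\sqrt r \geq \operatorname{esssup}|A|_{sv^\infty} \cdot (\text{correction})$—but the monotone version is simplest.) For the upper bound I would dominate $|A|_{sv^p}(x) \leq \sqrt r\, |A|_{sv^\infty}(x) \leq \sqrt r\, \|A\|_{sv^\infty}$ pointwise (using $\sum s_i^p \leq r s_1^p$), hence $\|A\|_{sv^p} \leq (r\,\mathrm{vol}(M))^{1/p} \|A\|_{sv^\infty}$, and let $p\to\infty$ so the prefactor tends to $1$. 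Combining the two bounds with the squeeze gives $\lim_{p\to\infty}\|A\|_{sv^p} = \|A\|_{sv^\infty}$; the final equality $\|A\|_{sv^\infty} = s(A)$ is just the definition recalled after Definition~\ref{schatten} (here $s(A)$ should be read as $\operatorname{esssup}_x s_1(A)(x)$).

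For part $(ii)$, the statement is a normalized monotonicity of $L^p$-type means, so I would argue as follows. Fix $s < p$ and set $\theta = s/p \in (0,1)$. Working pointwise, Proposition~\ref{shat1}$(iii)$ gives $|A|_{sv^p}(x) \leq |A|_{sv^s}(x)$, but that alone is not quite the right comparison once the volume normalizations are inserted; instead apply Hölder's inequality on $(M,*1)$ with exponents $1/\theta$ and $1/(1-\theta)$ to the function $|A|_{sv^s}^s = (|A|_{sv^s}^p)^{\theta}\cdot 1^{1-\theta}$:
\[
\int_M |A|_{sv^s}^s *1 \;\leq\; \Bigl(\int_M |A|_{sv^s}^p *1\Bigr)^{\theta} \cdot \mathrm{vol}(M)^{1-\theta}.
\]
Now use the pointwise bound $|A|_{sv^s}(x) \leq \sqrt r\, |A|_{sv^p}(x)$ (again from $\sum s_i^s \leq r^{1-s/p}(\sum s_i^p)^{s/p}$, i.e.\ the finite-sequence power-mean inequality with $r$ terms) to replace $|A|_{sv^s}$ by $\sqrt r\,|A|_{sv^p}$ inside the first factor, obtaining $\int_M|A|_{sv^s}^s*1 \leq r^{s/2}\|A\|_{sv^p}^{s} \,\mathrm{vol}(M)^{1-s/p}$. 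Taking $s$-th roots and rearranging the constants yields exactly
\[
\frac{1}{(r\,\mathrm{vol}(M))^{1/s}}\|A\|_{sv^s} \;\leq\; \frac{1}{\mathrm{vol}(M)^{1/p}}\|A\|_{sv^p}.
\]

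I do not anticipate a serious obstacle here—this lemma is a packaging of standard inequalities. The only point requiring a little care is bookkeeping of the two competing pointwise comparisons between $|A|_{sv^s}$ and $|A|_{sv^p}$ (the norm is \emph{decreasing} in the exponent, so $|A|_{sv^p}\le|A|_{sv^s}$, whereas to pass from $sv^s$ to $sv^p$ one pays a factor of $r$ from the number of singular values), and making sure that these combine with the Jensen/Hölder step so that the volume powers land with the stated normalization rather than its reciprocal. A sanity check on the scalar case $r=1$, where $|A|_{sv^s}=|A|_{sv^p}=|A|$ and the inequality reduces to the familiar $\mathrm{vol}(M)^{-1/s}\|A\|_{L^s}\le \mathrm{vol}(M)^{-1/p}\|A\|_{L^p}$, confirms the constants are placed correctly.
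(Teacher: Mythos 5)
Your argument for both parts is essentially the paper's: part $(i)$ combines the pointwise bounds $|A|_{sv^\infty}\le |A|_{sv^p}\le r^{1/p}|A|_{sv^\infty}$ with the standard $L^p\to L^\infty$ limit on a finite measure space, and part $(ii)$ is the bound $\sum_i s_i^s\le r\,s_1^s$ together with the normalized power-mean (Jensen/H\"older) inequality on $M$, only applied in a slightly different order. One constant needs fixing, though: the pointwise comparison you invoke in part $(ii)$ is $|A|_{sv^s}\le r^{1/s-1/p}|A|_{sv^p}$ (or more simply $|A|_{sv^s}\le r^{1/s}|A|_{sv^\infty}\le r^{1/s}|A|_{sv^p}$), not $\sqrt r\,|A|_{sv^p}$; with $\sqrt r$ your final bookkeeping only recovers the stated normalization when $s\le 2$, whereas with $r^{1/s}$ it gives exactly $\|A\|_{sv^s}\le (r\,\mathrm{vol}(M))^{1/s}\,\mathrm{vol}(M)^{-1/p}\|A\|_{sv^p}$ for all $s<p$. (Your $r=1$ sanity check cannot detect this since every power of $r$ is then $1$.) The same $\sqrt r$ appears harmlessly in part $(i)$, where the displayed conclusion $\|A\|_{sv^p}\le (r\,\mathrm{vol}(M))^{1/p}\|A\|_{sv^\infty}$ is correct regardless.
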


The next Lemma will be used in  the proof of Corollary~\ref{compprojnorm}. It will be important to estimate tangent vectors to the hyperbolic space $\HH$ at nearby points.

\begin{lemma}\label{ineqsvp} Let $A, B : V \to W $  be linear transformations between finite dimensional vector spaces with inner product. Suppose that for all $v \in V$, $|B(v)| \leq |A(v)|$. Then,   their singular values satisfy the inequality $s_k(B) \leq s_k(A)$ for all $k$.  
In particular, for  all $1 \leq p< \infty$, $|B|_{svp} \leq |A|_{svp}$, and $s(B) \leq s(A)$.  
\end{lemma}
\begin{proof}
The assumption $|B(v)| \leq |A(v)|$ for all $v$ implies that $BB^T \leq AA^T$ as positive definite matrices. The lemma follows from the  Courant-Fischer-Weyl minimax principle 
(cf \cite[Corollary III.1.2]{bhatia}).
\end{proof}

The following  elementary consequence of convexity  could not be found in the references. Therefore, we include a proof.
\begin{lemma}\label{lemmaconvvnorms} For $A, B \in Hom(V_1, V_2)$ and $2<p<\infty$
\[
    p(Q(A)^{p-2}A, B - A)  \leq |B|_{sv^p}^p-|A|_{sv^p}^p 
   \]
\end{lemma}
\begin{proof} The function $j(A) = Tr Q(A)^p$ is convex, and differentiable. Furthermore,
\[   
   (dj)_A(C)=  p(Q(A)^{p-2}A, C).
\]
If $j$ is a convex function on a vector space, it is always true that
\[
   (dj)_A(C) \leq j(A + C) - j(A).
\]
The proposition follows by setting $C = B - A$.
\end{proof}

\subsection{The functional}\label{sect:funtional}
Let $(M, g)$  be a compact Riemannian manifold with  boundary $ \partial M$ (possibly empty) and $\dim M=n$.  Let $(N, h)$ be closed.  Throughout the paper we will denote the inner product coming from the domain metric $(.;.)$ and the one from the target metric $(.,.)^\sharp.$ The notation $(.;.)^\sharp$ means we use both metrics.

For   $1< p < \infty$ consider the subspace of  maps $W^{1,p}(M,N) \cap C^0(M,N)$. 
For such a map $u$, define
\[
 J_p(u)=||du||^p_{sv^p}=\int_M |du|_{sv^p}^p*1.
\]
In order to determine the Euler-Lagrange equations of $J_p$  let
\[
Q(du)^2 :=dudu^T:  T_{u(x)} N \xrightarrow{du^T} T_x M \xrightarrow{du} T_{u(x)} N.
\]
Here $ Q(du)$ is  a section of the bundle 
$End(u^{-1}(TN))$ and 
$|du|_{sv^p}^p=TrQ(du)^p.$
It follows from the multiplication theorems of Sobolev spaces that, in the continuous range $p>n$, $J_p$ is a functional of class $C^{[p]}$, where $[p]$ denotes the largest integer no greater than $p$. The proof of the next proposition is elementary.


\begin{proposition}\label{prop:firstvar2}The Euler Lagrange equations of the functional  $J_p$ are
\begin{equation}\label{eqn:firstvargen}
\int_M(Q(du)^{p-2}du; D\phi)^\sharp*1=0 \ \ \ \forall \phi \in \Omega^0(u^{-1}(TN)).
\end{equation}
In particular by taking $\phi$  compactly supported away from $\partial M$, 
\begin{equation}\label{eqn:firstvar3}
D^*Q(du)^{p-2}du=0.
\end{equation}
Here $D=D_u$ is the pullback of the Levi-Civita connection on $u^{-1}(TN)$.
\end{proposition}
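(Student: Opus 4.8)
The plan is to compute the first variation of $J_p$ directly. Fix a map $u \in W^{1,p}(M,N)\cap C^0(M,N)$ which is a critical point, and a section $\phi \in \Omega^0(u^{-1}(TN))$. Since $N$ has non-positive curvature (hence is a $K(\pi,1)$ with a well-behaved exponential map), I would produce a variation $u_t$ of $u$ in the prescribed (relative) homotopy class with $\frac{\partial u_t}{\partial t}\big|_{t=0} = \phi$, for instance $u_t(x) = \exp_{u(x)}(t\phi(x))$. For the Dirichlet problem we take $\phi$ vanishing on $\partial M$; for the Neumann problem $\phi$ is unrestricted. The regularity hypothesis $p > n$ guarantees, via the Sobolev multiplication theorems quoted before Proposition~\ref{prop:firstvar2}, that $t \mapsto J_p(u_t)$ is differentiable (indeed $C^{[p]}$), so criticality gives $\frac{d}{dt}J_p(u_t)\big|_{t=0}=0$.

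The main computation is to differentiate $J_p(u_t) = \int_M \mathrm{Tr}\,Q(du_t)^p *1$ under the integral sign. The key identity is the one recorded in the proof of Proposition~\ref{lemmaconvvnorms}: the function $j(A) = \mathrm{Tr}\,Q(A)^p$ on $\mathrm{Hom}(V_1,V_2)$ is differentiable with $(dj)_A(C) = p\,(Q(A)^{p-2}A, C)$, where $(\cdot,\cdot)$ is the trace inner product from \eqref{eqn:traceinn}. Applying this pointwise with $A = du$ and $C = \frac{\partial}{\partial t}(du_t)\big|_{t=0} = D\phi$ (the pullback covariant derivative, since differentiating the $N$-valued one-form $du_t$ in $t$ and then in the $M$-direction commute up to the connection, and the metric-compatibility of $D$ lets us pull $\frac{\partial}{\partial t}$ through the inner product and through $Q(du_t)^{p-2}$), one gets
\[
\frac{d}{dt}J_p(u_t)\Big|_{t=0} = p\int_M \big(Q(du)^{p-2}du,\, D\phi\big)^\sharp *1.
\]
Setting this equal to zero and dividing by $p$ yields \eqref{eqn:firstvargen}.

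For \eqref{eqn:firstvar3}, take $\phi$ compactly supported in the interior of $M$ and integrate by parts: by definition of the formal adjoint $D^*$ of the pullback connection $D = D_u$ on $u^{-1}(TN)$ with respect to the combined metrics, $\int_M (Q(du)^{p-2}du; D\phi)^\sharp *1 = \int_M (D^*(Q(du)^{p-2}du); \phi)^\sharp *1$ with no boundary term. Since this vanishes for all such $\phi$, the fundamental lemma of the calculus of variations gives $D^*Q(du)^{p-2}du = 0$ weakly in the interior.

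I expect the main obstacle to be a matter of care rather than depth: justifying differentiation under the integral sign and the interchange of $\partial_t$ with $D$ at the limited regularity $p > n$ (where $du \in L^p$ and $Q(du)^{p-2}du \in L^{p/(p-1)} = L^q$), and checking that the variation $u_t = \exp_{u}(t\phi)$ stays in $W^{1,p}\cap C^0$ in the correct homotopy class with controlled $t$-derivatives. These points are handled by the same Sobolev multiplication estimates already invoked for the $C^{[p]}$ regularity of $J_p$, together with non-positivity of curvature (which makes $\exp$ a diffeomorphism on fibers and keeps the homotopy class fixed), so the argument is routine once those tools are in place.
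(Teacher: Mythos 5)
Your proposal is correct and follows essentially the same route as the paper: differentiate $t \mapsto J_p(u_t)$ under the integral, use the pointwise derivative of $A \mapsto \mathrm{Tr}\,Q(A)^p$ (which the paper carries out explicitly via the trace identity rather than citing Proposition~\ref{lemmaconvvnorms}), and identify $\frac{D}{\partial t}du_t\big|_{t=0}$ with $D\phi$ before integrating by parts for the distributional form. The only difference is presentational, not mathematical.
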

A critical point of the functional $J_p=||du||^p_{sv^p}$ is called a {\it{Schatten $p$-harmonic map}} or simply a {\it{$J_p$-harmonic map.}} We are mainly interested in the case when $N$ has non-positive curvature. We will show (cf. Corollary~\ref{cor:convex}) that in this case every $J_p$-harmonic map is a minimizer. Otherwise we restrict to minimizers and we call such a map  a {\it$J_p$-minimizing map} or a {\it$J_p$-minimizer}. $J_p$-harmonic maps should not be confused with the usual $p$-harmonic maps which are  critical points of the {\it{different functional}}
\[
||du||_{L^p}^p=\int_M |du|_2^p*1.
\]
The same can be said for the infinity norms.
The $L^\infty$ norm of $du$ is defined as 
$||du||_{L^\infty}=\it{essup} |du|_2$. {\it In general this is different from the Lipschitz constant which is related to the  the operator norm
$||.||_{sv^\infty}$} (unless one of the dimensions is one).

\subsection{The second variation}
Let $(M, g)$ and  $(N, h)$ as before and $p>n$. We also assume that $t   \mapsto u_t \in W^{1,p}(M, N)$ is a  $C^2$ geodesic homotopy. 
We  continue to denote $D=D_u$ the pullback connection on $u^{-1}(TN).$
\begin{lemma} \label{convineq} The following holds:
\[
 (\frac{D}{\partial t}du \ du^T, \frac{D}{\partial t}Q(du)^{p-2} )^\sharp  \geq 0.
\]
\end{lemma}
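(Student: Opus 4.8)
The plan is to work at a fixed point $x \in M$ and to diagonalize the relevant symmetric operators simultaneously. At $x$, let $Q = Q(du)$ be the non-negative symmetric endomorphism of $T_{u(x)}N$ with eigenvalues $s_1 \geq \cdots \geq s_r \geq 0$ and an orthonormal eigenbasis $\{F_i\}$. The inequality to be proved involves two symmetric operators: $W := \frac{D}{\partial t}(du\, du^T) = \frac{D}{\partial t}(Q^2)$ and $\frac{D}{\partial t}(Q^{p-2})$. The key observation is that $Q^2$ and $Q^{p-2}$ are both functions of the single operator $Q$, so their $t$-derivatives, while not simultaneously diagonalizable with $Q$ in general, have a controlled structure in the eigenbasis of $Q$. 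Specifically, for a smooth function $\varphi$, the derivative $\frac{D}{\partial t}\varphi(Q)$ has matrix entries in the eigenbasis of $Q$ given by the divided-difference (Daleckii--Krein) formula: the $(i,j)$ entry is $\frac{\varphi(s_i) - \varphi(s_j)}{s_i - s_j}$ times the $(i,j)$ entry of $\frac{D}{\partial t}Q$ (with the entry being $\varphi'(s_i)$ times that of $\frac{D}{\partial t}Q$ when $s_i = s_j$). Thus if $M_{ij}$ denotes the $(i,j)$ entry of $\frac{D}{\partial t}Q$ in this basis, then $W_{ij} = \Delta[x^2]_{ij}\, M_{ij}$ and $(\frac{D}{\partial t}Q^{p-2})_{ij} = \Delta[x^{p-2}]_{ij}\, M_{ij}$, where $\Delta[f]_{ij}$ is the corresponding divided difference. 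Wait---I should double-check whether $W$ equals $\frac{D}{\partial t}(Q^2)$ directly; since $Q^2 = du\,du^T$ by definition, yes, $W = \frac{D}{\partial t}(du\,du^T)$ exactly, and this is manifestly symmetric. Similarly $\frac{D}{\partial t}Q^{p-2}$ is symmetric.

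Given this, the trace inner product $(W, \frac{D}{\partial t}Q^{p-2})^\sharp = \mathrm{Tr}(W \cdot \frac{D}{\partial t}Q^{p-2})$ expands in the eigenbasis as $\sum_{i,j} W_{ij}\, \overline{(\frac{D}{\partial t}Q^{p-2})_{ij}}$, and since everything is real, this equals $\sum_{i,j} \Delta[x^2]_{ij}\, \Delta[x^{p-2}]_{ij}\, M_{ij}^2$. Each term is non-negative: $M_{ij}^2 \geq 0$, and both divided differences $\Delta[x^2]_{ij}$ and $\Delta[x^{p-2}]_{ij}$ are non-negative because $x \mapsto x^2$ and $x \mapsto x^{p-2}$ are non-decreasing on $[0,\infty)$ (here $p > n \geq 1$, and actually we should note $p > 2$ is what matters for $x^{p-2}$ to be monotone increasing rather than just non-decreasing; if $p \le 2$ one still has $x^{p-2}$ monotone, decreasing, but then a separate sign check is needed---however in this paper $p > n$ and the regime of interest is large $p$, so $x^{p-2}$ is increasing and $\Delta \geq 0$). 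Summing over $i,j$ gives the claimed inequality $(W, \frac{D}{\partial t}Q^{p-2})^\sharp \geq 0$.

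The main obstacle I anticipate is justifying the divided-difference formula for the covariant $t$-derivative along the geodesic homotopy, including the subtlety at coincident eigenvalues where the spectral decomposition of $Q$ is not smooth in $t$. The clean way around this is to avoid differentiating the eigenprojections altogether: fix a parallel orthonormal frame along the curve $t \mapsto u_t(x)$, write $Q(t)$ as a symmetric matrix in that frame, and observe that $\mathrm{Tr}\big(\frac{d}{dt}(Q^2) \cdot \frac{d}{dt}(Q^{p-2})\big)$ at the point $t=0$ can be computed purely algebraically from $Q(0)$ and $\dot Q(0)$ via the integral representation $\frac{d}{dt}\varphi(Q) = \int_0^1 e^{(1-s)\log\text{-type interpolation}}\dots$---more concretely, using $\frac{d}{dt}Q^k = \sum_{a+b = k-1} Q^a \dot Q\, Q^b$ for integer powers and a limiting/functional-calculus argument for $Q^{p-2}$, and then checking the trace is a sum of non-negative terms by passing to the eigenbasis of $Q(0)$ only (which is legitimate since we only diagonalize the fixed operator $Q(0)$, not the family). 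Alternatively, one can cite that $\varphi(Q)$ for $\varphi$ monotone is an operator-monotone-type construction and use the known fact that $\mathrm{Tr}(\dot\varphi(Q)\,\dot\psi(Q)) \geq 0$ whenever $\varphi, \psi$ are both non-decreasing---this is a standard fact about the derivative of the functional calculus. I would present it via the eigenbasis-of-$Q(0)$ computation since it is elementary and self-contained, flagging the coincident-eigenvalue case as handled by the limiting form of the divided difference (continuity of $\Delta[f]$ across the diagonal).
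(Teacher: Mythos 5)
Your proof is correct, but it takes a genuinely different route from the paper's. The paper first notes that for symmetric $S$ one has $\left(\tfrac{D}{\partial t}(du)\,du^T,\,S\right)^\sharp=\tfrac12\left(R',\,S\right)^\sharp$ with $R=Q^2=du\,du^T$ (so your reading of the left slot as $\tfrac{D}{\partial t}(du\,du^T)$ rather than $(\tfrac{D}{\partial t}du)\,du^T$ only changes the quantity by a harmless factor of $2$), and then identifies $\tfrac{p}{2}\,Tr\bigl(R'\,(R^{p/2-1})'\bigr)$ with the Hessian $D^2F(R)(R',R')$ of the convex function $F(R)=Tr\,R^{p/2}$; non-negativity is then immediate from convexity of the Schatten--von Neumann norm. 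You instead expand the trace in the eigenbasis of $Q$ via the Daleckii--Krein divided-difference formula and use that $x\mapsto x^2$ and $x\mapsto x^{p-2}$ are both non-decreasing on $[0,\infty)$, so each summand $\Delta[x^2]_{ij}\,\Delta[x^{p-2}]_{ij}\,M_{ij}^2$ is non-negative. Your argument is more self-contained once the divided-difference formula is granted, and it yields the more general fact that $Tr\bigl(\dot\varphi(Q)\,\dot\psi(Q)\bigr)\ge 0$ for any pair of non-decreasing functions; the paper's argument is shorter because it delegates everything to convexity. One small caution: $Q=(du\,du^T)^{1/2}$ need not be differentiable in $t$ where singular values vanish, so it is cleaner to run your divided-difference computation on the differentiable family $R=Q^2$ with the non-decreasing functions $x$ and $x^{(p-2)/2}$, giving $Tr\bigl(R'\,(R^{(p-2)/2})'\bigr)=\sum_{ij}\Delta[x^{(p-2)/2}]_{ij}(R')_{ij}^2\ge 0$; this is exactly the quantity the paper manipulates and uses only the differentiability guaranteed by the standing hypotheses of the section.
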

\begin{proof} The estimate is pointwise. We may choose  normal coordinates near a point so that the  Christoffel symbols vanish at the point. Therefore $\frac{D}{\partial t}=\frac{\partial }{\partial t}$.
For simplicity set $du=A$, $Q^2=AA^T=R$, $A'=\partial A/ \partial t$ and $R'=\partial R/ \partial t$. Then,
\begin{eqnarray*}
p( A'A^T, {Q^{p-2}}' )^\sharp
&=&\frac{p}{2}(R', (R^{{p-2}/2})' )^\sharp
=\frac{p}{2} Tr(R'(R^{p/2-1})')^\sharp\\
&=& D^2 F(R',R')^\sharp
\geq 0. 
\end{eqnarray*}
Here we set $F(R):=TrR^{p/2}$ is convex by the convexity of the Schatten norms. Also in the second equality we used  (\ref{eqn:traceinn}).
\end{proof}

\begin{proposition}\label{prop:secvar} Let $t   \mapsto u_t \in W^{1,p}(M, N)$ be a  $C^2$ path which is also a geodesic homotopy. Then, 
\begin{eqnarray*}
\frac{1}{p}\frac{d^2J_p}{dt^2}&\geq&-\int_M( R^N \left(Q(du)^{p-2/2}du \frac{\partial u}{\partial t} \right)\frac{\partial u}{\partial t}; Q(du)^{p-2/2}du )^\sharp*1 \\
 &+&\int_M \left |   Q(du)^{p-2/2} D\frac{\partial u}{\partial t} \right |^2 *1.
\end{eqnarray*}
In the above, we view $Q(du)^{p-2/2}du$ and $D\frac{\partial u}{\partial t}$ as  sections of the bundle 
$T^*M\otimes u^{-1}(TN)$. 
\end{proposition}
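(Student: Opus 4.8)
The plan is to differentiate $J_p(t) = \int_M \operatorname{Tr}(dudu^T)^{p/2}*1$ twice along the geodesic homotopy, using the first variation formula from Proposition~\ref{prop:firstvar2} as the starting point, and then to carefully separate the resulting second-order expression into a curvature term and a manifestly non-negative term, discarding the cross-term by Lemma~\ref{convineq}. Concretely, I would start from
\[
\frac{1}{p}\frac{dJ_p}{dt} = \int_M (Q(du)^{p-2}du; D_{\frac{\partial}{\partial t}}du)^\sharp *1
\]
and differentiate once more in $t$. Since $t\mapsto u_t$ is a $C^2$ geodesic homotopy, $\frac{D}{\partial t}\frac{\partial u}{\partial t}\equiv 0$, so when I commute derivatives I pick up only the curvature of $N$: the identity $D_{\frac{\partial}{\partial t}}D_{\frac{\partial}{\partial t}}du = D(D_{\frac{\partial}{\partial t}}\frac{\partial u}{\partial t}) + R^N(\frac{\partial u}{\partial t}, du)\frac{\partial u}{\partial t} = R^N(\frac{\partial u}{\partial t}, du)\frac{\partial u}{\partial t}$, viewing $du$ as a $u^{-1}(TN)$-valued one-form.

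Differentiating the integrand gives two groups of terms: one where $\frac{D}{\partial t}$ lands on the factor $Q(du)^{p-2}$, and one where it lands on $D_{\frac{\partial}{\partial t}}du$. The second group produces
\[
\int_M (Q(du)^{p-2}du; D_{\frac{\partial}{\partial t}}D_{\frac{\partial}{\partial t}}du)^\sharp *1
= \int_M (Q(du)^{p-2}du; R^N(\tfrac{\partial u}{\partial t}, du)\tfrac{\partial u}{\partial t})^\sharp *1 + \int_M |Q(du)^{p/2-1}D\tfrac{\partial u}{\partial t}|^2*1,
\]
where after an integration by parts (or by regrouping $Q(du)^{p-2} = Q(du)^{p/2-1}Q(du)^{p/2-1}$ and using that $Q(du)$ is symmetric and commutes appropriately with the contraction over $T^*M$) the leading piece becomes the stated positive term $\int_M|Q(du)^{p/2-1}D\frac{\partial u}{\partial t}|^2*1$, and the curvature piece — using the symmetries of $R^N$ — rewrites as $-\int_M(R^N(Q(du)^{p/2-1}du\,\frac{\partial u}{\partial t})\frac{\partial u}{\partial t}; Q(du)^{p/2-1}du)^\sharp*1$. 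The first group of terms is exactly $p\int_M(D_{\frac{\partial}{\partial t}}du\, du^T, D_{\frac{\partial}{\partial t}}Q(du)^{p-2})^\sharp*1$ up to the chain-rule bookkeeping for $\frac{d}{dt}Q(du)^{p-2}$, and Lemma~\ref{convineq} says precisely that this quantity is $\geq 0$, so it can be dropped to obtain the inequality.

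The main obstacle I expect is the bookkeeping in the second group: carefully justifying that the term where $\frac{D}{\partial t}$ hits $D_{\frac{\partial}{\partial t}}du$, once the curvature substitution is made, really does split as ``positive square $+$ curvature contraction'' with the exact index placement claimed in the statement — in particular that the factor $Q(du)^{p-2}$ distributes symmetrically as two copies of $Q(du)^{p/2-1}$ around the $\frac{\partial u}{\partial t}$ insertions. This is the step where one must be careful that $Q(du)$ acts on the $u^{-1}(TN)$ factor while the $T^*M$ factor is just contracted with the domain metric, so that moving one copy of $Q(du)^{p/2-1}$ across the pairing is legitimate (it is, since $Q(du)$ is self-adjoint on $u^{-1}(TN)$). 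The curvature sign and the reduction of $(Q(du)^{p-2}du; R^N(\frac{\partial u}{\partial t}, du)\frac{\partial u}{\partial t})^\sharp$ to the sectional-curvature-type expression displayed also requires invoking the pair and Bianchi symmetries of $R^N$; I would do this pointwise in a frame, as in the proof of Lemma~\ref{convineq}. Everything else — the first variation, the identification of the cross-term with Lemma~\ref{convineq}, and the regularity needed to differentiate twice (guaranteed since $p>n$ and $J_p\in C^{[p]}$) — is already in place.
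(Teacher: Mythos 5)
Your overall strategy is the paper's: differentiate the first variation once more in $t$, use the geodesic condition $\frac{D}{\partial t}\frac{\partial u}{\partial t}\equiv 0$ together with the commutation formula to produce the curvature term, identify the term where the derivative hits $Q(du)^{p-2}$ with the quantity in Lemma~\ref{convineq} and discard it, and keep the remaining square. However, as written your Leibniz expansion is incomplete and one intermediate identity is false. Differentiating the integrand $(Q(du)^{p-2}du;\, D_{\frac{\partial}{\partial t}}du)^\sharp$ produces \emph{three} terms, not two: (a) the derivative hitting $Q(du)^{p-2}$, (b) the derivative hitting the $du$ sandwiched in $Q(du)^{p-2}du$, and (c) the derivative hitting $D_{\frac{\partial}{\partial t}}du$. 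The positive term $\int_M |Q(du)^{p/2-1}D\frac{\partial u}{\partial t}|^2*1$ comes from (b): it is $(Q(du)^{p-2}\frac{D}{\partial t}du;\, D\frac{\partial u}{\partial t})^\sharp$, which equals the square after splitting $Q(du)^{p-2}=Q(du)^{p/2-1}Q(du)^{p/2-1}$ and using self-adjointness. You instead claim that (c), namely
\[
\int_M \bigl(Q(du)^{p-2}du;\; D_{\frac{\partial}{\partial t}}D_{\frac{\partial}{\partial t}}du\bigr)^\sharp *1,
\]
splits into the curvature term \emph{plus} the square. That is not correct: by the commutation formula and the geodesic condition this term equals \emph{only} the curvature contraction, since $D\bigl(\frac{D}{\partial t}\frac{\partial u}{\partial t}\bigr)=0$; no integration by parts or regrouping extracts a square from it. Because you have also omitted term (b) from the expansion, the two errors happen to cancel and the final inequality is the right one, but the derivation as stated contains a false identity and an incomplete product rule.

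The fix is purely organizational: write $\frac{D}{\partial t}\bigl(Q(du)^{p-2}du\bigr)=\frac{D}{\partial t}\bigl(Q(du)^{p-2}\bigr)du+Q(du)^{p-2}\frac{D}{\partial t}du$, assign the first summand to Lemma~\ref{convineq}, the second to the square, and let the commutator term (c) carry the curvature alone. With that correction your argument coincides with the paper's proof. Your remarks about moving $Q(du)^{p/2-1}$ across the pairing (legitimate since $Q(du)$ is self-adjoint on $u^{-1}(TN)$ and the $T^*M$ slot is only contracted with the domain metric) and about using the symmetries of $R^N$ to put the curvature term in the displayed form are fine.
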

\begin{proof}
\begin{eqnarray*}
\frac{1}{p}\frac{d^2J_p}{dt^2}&=&\int_M ( \frac{D}{\partial t}D\frac{\partial u}{\partial t};   Q(du)^{p-2}du )^\sharp + (D\frac{\partial u}{\partial t};  \frac{D}{\partial t}(Q(du)^{p-2}du) )^\sharp *1 \\
&=&\int_M( D \frac{D}{\partial t}\frac{\partial u}{\partial t};   Q(du)^{p-2}du)^\sharp*1 -\int_M( R^N\left(du, \frac{\partial u}{\partial t} \right) \frac{\partial u}{\partial t};  Q(du)^{p-2}du)^\sharp*1 \\
&+&
\int_M (D\frac{\partial u}{\partial t};  \frac{D}{\partial t}Q(du)^{p-2} du+  Q(du)^{p-2}\frac{D}{\partial t}du)^\sharp*1 \\
&=&  -\int_M( R^N\left(du, \frac{\partial u}{\partial t} \right)\frac{\partial u}{\partial t};  Q(du)^{p-2}du)^\sharp*1\\
&+&\int_M( \frac{D}{\partial t} du;   \frac{D}{\partial t}Q(du)^{p-2}du)^\sharp*1 \\
&+&\int_M ( D\frac{\partial u}{\partial t};  Q(du)^{p-2}\frac{D}{\partial t}du)^\sharp*1\\
&\geq&-\int_M(R^N (Q(du)^{p-2/2}du, \frac{\partial u}{\partial t} )\frac{\partial u}{\partial t};  Q(du)^{p-2/2}du)^\sharp*1 \\
 &+&\int_M \left |Q(du)^{p-2/2}  D\frac{\partial u}{\partial t}  \right |^2 *1. 
\end{eqnarray*}
In the third equality we used  the geodesic equation $\frac{D}{\partial t}\frac{\partial u}{\partial t} \equiv 0$ and in the last inequality  Lemma~\ref{convineq}.
\end{proof}

\begin{corollary}\label{cor:convex} Assume that $N$ has nonpositive curvature. Then the map 
\[
t \mapsto J_p(u_t)
\]
is convex. In particular, any  critical point is a global minimum.
\end{corollary}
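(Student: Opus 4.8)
The plan is to deduce Corollary~\ref{cor:convex} directly from the second variation estimate in Proposition~\ref{prop:secvar} together with the sign hypothesis on the curvature of the target. First I would observe that along a $C^2$ geodesic homotopy $t \mapsto u_t$, the function $t \mapsto J_p(u_t)$ is twice differentiable (this was already noted in the setup preceding Lemma~\ref{convineq}), so it suffices to show $\frac{d^2 J_p}{dt^2} \geq 0$ for every such homotopy. By Proposition~\ref{prop:secvar},
\[
\frac{1}{p}\frac{d^2 J_p}{dt^2} \geq -\int_M \bigl( R^N\bigl(Q(du)^{p-2/2}du\,\tfrac{\partial u}{\partial t}\bigr)\tfrac{\partial u}{\partial t};\, Q(du)^{p-2/2}du \bigr)^\sharp *1 + \int_M \bigl|Q(du)^{p-2/2}D\tfrac{\partial u}{\partial t}\bigr|^2 *1,
\]
and the second term is manifestly nonnegative, so the whole issue is the sign of the curvature term.

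The key step is therefore to argue that the curvature integrand is nonpositive, so that $-1$ times it is nonnegative. Since $N$ has nonpositive sectional curvature, for any vectors $X, Y$ in $T_{u(x)}N$ one has the standard inequality $\langle R^N(X,Y)Y, X\rangle \leq 0$. Here I would apply this columnwise: writing $Q(du)^{p-2/2}du$ as a section of $T^*M \otimes u^{-1}(TN)$, pick a local orthonormal coframe $\{e_\alpha\}$ on $M$ and set $X_\alpha = (Q(du)^{p-2/2}du)(e_\alpha) \in T_{u(x)}N$ and $Y = \frac{\partial u}{\partial t}$. Then the curvature term in the proposition is exactly $\sum_\alpha \langle R^N(X_\alpha, Y)Y, X_\alpha\rangle$, which is $\leq 0$ term by term by nonpositive curvature. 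Hence $\frac{1}{p}\frac{d^2 J_p}{dt^2} \geq 0$, so $t \mapsto J_p(u_t)$ is convex.

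For the second assertion, I would note that if $u$ is a critical point of $J_p$, then for any other map $w$ in the same (relative) homotopy class, nonpositive curvature of $N$ guarantees the existence of a $C^2$ geodesic homotopy $u_t$ with $u_0 = u$ and $u_1 = w$ (geodesic homotopies exist and are essentially unique between maps into nonpositively curved targets; this is the standard fact underlying uniqueness of harmonic maps à la Hartman/Al'ber). Along this homotopy, $g(t) := J_p(u_t)$ is convex with $g'(0) = 0$ — the latter because $u = u_0$ is critical and $\frac{\partial u}{\partial t}\big|_{t=0}$ is an admissible variation field (a section of $u^{-1}(TN)$ vanishing on $\partial M$ in the Dirichlet case). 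A convex function with vanishing derivative at $0$ attains its minimum at $0$, so $J_p(w) = g(1) \geq g(0) = J_p(u)$, i.e. $u$ is a global minimizer.

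The main obstacle I anticipate is purely technical: making the columnwise application of the curvature inequality rigorous requires being careful that $Q(du)^{p-2/2}du$ is being contracted against $\frac{\partial u}{\partial t}$ in precisely the way the algebraic tensor $R^N(\cdot,\cdot)\cdot$ demands, and that the notation $(R^N(\,\cdot\,)\,\cdot\,;\,\cdot\,)^\sharp$ in Proposition~\ref{prop:secvar} indeed means $\sum_\alpha \langle R^N(X_\alpha, Y)Y, X_\alpha\rangle$ rather than some other contraction — once that bookkeeping is pinned down the sign is immediate. A secondary point worth a sentence is justifying the existence of the $C^2$ geodesic homotopy between $u$ and an arbitrary competitor in the Sobolev class $W^{1,p}(M,N)$ with $p > n$, where Sobolev embedding gives continuity; but this is standard for nonpositively curved targets and I would simply cite it.
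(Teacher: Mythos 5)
Your proposal is correct and is exactly the argument the paper intends: the corollary is stated without proof as an immediate consequence of Proposition~\ref{prop:secvar}, the point being that the curvature term is nonnegative by nonpositive sectional curvature of $N$ and the remaining term is a square, so $\frac{d^2J_p}{dt^2}\geq 0$; the "critical point implies global minimum" step then follows from convexity along the geodesic homotopy joining the critical point to any competitor. Your extra care about the contraction convention in the curvature term and the existence of the $C^2$ geodesic homotopy in $W^{1,p}$ with $p>n$ fills in details the paper leaves implicit, but the route is the same.
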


\begin{corollary}\label{cor:unique1} Assume that $N$ has nonpositive curvature. Let $t   \mapsto u_t $ be a  $C^2$  geodesic homotopy between two non-constant minimizing maps $u_0$ and $u_1$. Then,  
\[
\left | \frac{\partial u}{\partial t} \right |  \equiv c \ \ \mbox{and} \ \ \left(R^N \left( Q(du)^{p-2/2}du, \frac{\partial u}{\partial t} \right)\frac{\partial u}{\partial t}; Q(du)^{p-2/2}du \right)^\sharp \equiv 0.
\]
If in addition $\partial M \neq \emptyset$,  then there exists a unique $J_p$-minimizer in a fixed homotopy class.
\end{corollary}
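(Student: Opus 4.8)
The plan is to read off both conclusions directly from the second-variation inequality in Proposition~\ref{prop:secvar}, using convexity (Corollary~\ref{cor:convex}) and the non-positivity of the curvature of $N$. First I would observe that since $u_0$ and $u_1$ are both minimizers, and $t\mapsto J_p(u_t)$ is convex by Corollary~\ref{cor:convex}, the function $t\mapsto J_p(u_t)$ is constant on $[0,1]$; hence $\frac{d^2 J_p}{dt^2}\equiv 0$ along the homotopy. Plugging this into the inequality of Proposition~\ref{prop:secvar}, the left side vanishes, while on the right side both terms are individually non-negative: the curvature term because $N$ has non-positive curvature (so $-\langle R^N(X,Y)Y,X\rangle^\sharp \ge 0$ after the appropriate substitution $X = Q(du)^{p-2/2}du$, $Y=\partial u/\partial t$), and the gradient term $\int_M |Q(du)^{p-2/2}D\frac{\partial u}{\partial t}|^2*1$ trivially. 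Therefore each term is identically zero, which immediately gives the displayed curvature identity and also $Q(du)^{p-2/2}D\frac{\partial u}{\partial t}\equiv 0$.

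Next I would extract $|\partial u/\partial t|\equiv c$. Since $u_0$ (say) is non-constant, $du_0$ is not identically zero, so $Q(du_0)$ is a non-negative symmetric endomorphism that is nonzero on an open set; combined with the ellipticity/unique-continuation-type behavior one expects, $Q(du_t)$ is invertible on a dense set, and there $D\frac{\partial u}{\partial t}=0$. A cleaner route, avoiding regularity subtleties, is to use the geodesic homotopy structure directly: along a geodesic homotopy $\frac{D}{\partial t}\frac{\partial u}{\partial t}\equiv 0$, so $t\mapsto |\frac{\partial u}{\partial t}|(x)$ is constant in $t$ for each fixed $x$; what remains is to see it is constant in $x$ as well. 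For that I would use $D\frac{\partial u}{\partial t}=0$ on the support of $du$ together with the fact that $\frac{\partial u}{\partial t}$ is itself (up to the exponential map) the ``difference'' of $u_0$ and $u_1$ and hence controlled by $du_0, du_1$; on the complement of the support of $du$, $u$ is locally constant so $\frac{\partial u}{\partial t}$ is parallel there too. Patching these gives $D\frac{\partial u}{\partial t}\equiv 0$ globally, hence $|\frac{\partial u}{\partial t}|$ is constant.

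Finally, for uniqueness when $\partial M\neq\emptyset$: if $u_0$ and $u_1$ both solve the Dirichlet problem with the same boundary data, then $\frac{\partial u}{\partial t}$ vanishes on $\partial M$. Since $\frac{\partial u}{\partial t}$ is parallel ($D\frac{\partial u}{\partial t}\equiv 0$) and $M$ is connected, a parallel section vanishing at a boundary point vanishes identically, so $u_0 = u_1$. (If one prefers the constancy formulation: $|\frac{\partial u}{\partial t}|\equiv c$ and the boundary condition forces $c=0$.) The existence of such a geodesic homotopy joining any two minimizers in the same relative homotopy class follows from non-positive curvature of the target, which is standard and was already invoked in the setup.

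The main obstacle I anticipate is the passage from $Q(du)^{p-2/2}D\frac{\partial u}{\partial t}\equiv 0$ to $D\frac{\partial u}{\partial t}\equiv 0$ globally, i.e.\ handling the locus where $Q(du)$ degenerates (one or both singular values vanish): this is exactly the place where the ``disparity between the two eigenvalues'' flagged in the introduction could bite, and one must argue that on the open set where $u$ is not already locally constant the relevant singular value is positive, while on the rest the parallelism of $\frac{\partial u}{\partial t}$ holds for the trivial reason that $u_t$ is locally constant there. Everything else is a routine unwinding of convexity plus the curvature sign.
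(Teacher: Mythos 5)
Your proposal is correct and follows essentially the same route as the paper: convexity forces $\frac{d^2J_p}{dt^2}\equiv 0$, so both non-negative terms in Proposition~\ref{prop:secvar} vanish, and then $D\frac{\partial u}{\partial t}\equiv 0$ is obtained by splitting $M$ into $\{du\neq 0\}$ (together with its closure) and its complement, exactly as in the paper's proof. The degeneracy of $Q(du)$ that you flag as the main obstacle is present in the paper's argument as well --- it passes from $Q(du)^{(p-2)/2}D\frac{\partial u}{\partial t}=0$ to $D\frac{\partial u}{\partial t}=0$ on $\{du\neq 0\}$ without comment --- so your treatment is no less complete than the published one.
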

\begin{proof}  Proposition~\ref{prop:secvar} implies
\[ 
Q(du)^{p-2/2}D\frac{\partial u}{\partial t}   \equiv 0,\ \ 
( R^N \left(Q(du)^{p-2/2}du,  \frac{\partial u}{\partial t} \right)\frac{\partial u}{\partial t}; Q(du)^{p-2/2}du)^\sharp \equiv 0. 
\]
We next claim that $D\frac{\partial u}{\partial t} =0$ everywhere.
The first equality implies that $\frac{D}{\partial t}du=D\frac{\partial u}{\partial t} =0$ on the set  $\{du \neq 0 \}$.  On the complement, $du \equiv 0$ and hence the claim also holds. Since $\frac{\partial u}{\partial t} $ is covariantly constant, $|\frac{\partial u}{\partial t}| \equiv c $.
\end{proof}

\begin{corollary}\label{cor:unique2} Assume that $N$ has nonpositive curvature. Let  $t   \mapsto u_t $ be a  $C^2$ geodesic homotopy between two non-constant minimizing maps $u_0$ and $u_1$.  Assuming the target has negative curvature,  either $u_0=u_1$ or the rank of each $u_t$ is $\leq 1$.
\end{corollary}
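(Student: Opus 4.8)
The plan is to build on Corollary~\ref{cor:unique1}, which already supplies the two conclusions $Q(du_t)^{p-2/2}D\frac{\partial u}{\partial t} \equiv 0$ and the vanishing of the curvature term $(R^N(Q(du_t)^{p-2/2}du_t, \frac{\partial u}{\partial t})\frac{\partial u}{\partial t}; Q(du_t)^{p-2/2}du_t)^\sharp \equiv 0$, together with $|\frac{\partial u}{\partial t}| \equiv c$. I would argue by contradiction: suppose $u_0 \neq u_1$, so that $c > 0$, i.e. the variation field $\frac{\partial u}{\partial t}$ is nowhere zero, and suppose further that on some open set the rank of $du_t$ is $2$ (equivalently $Q(du_t)$ is invertible there, since $\dim N = 2$ in the surface case — though for the general statement one only needs rank $\geq 2$). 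On that set $Q(du_t)^{p-2/2}$ is invertible, so $Q(du_t)^{p-2/2}du_t$ has the same rank as $du_t$, namely $\geq 2$.

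The key step is then to extract a genuine sectional-curvature contradiction from the vanishing curvature term. Writing $\Phi = Q(du_t)^{p-2/2}du_t$, viewed as a section of $T^*M \otimes u^{-1}(TN)$, the vanishing statement is $\sum_\alpha (R^N(\Phi(e_\alpha), \frac{\partial u}{\partial t})\frac{\partial u}{\partial t}; \Phi(e_\alpha))^\sharp = 0$. Since $N$ has \emph{negative} curvature, each summand $(R^N(\Phi(e_\alpha), \frac{\partial u}{\partial t})\frac{\partial u}{\partial t}; \Phi(e_\alpha))^\sharp$ is $\leq 0$, and it is strictly negative unless $\Phi(e_\alpha)$ is parallel to $\frac{\partial u}{\partial t}$ in $T_{u_t(x)}N$. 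Hence the vanishing forces $\Phi(e_\alpha)(x) \parallel \frac{\partial u}{\partial t}(x)$ for every $\alpha$ and every $x$, so the image of $\Phi(x)\colon T_xM \to T_{u_t(x)}N$ lies in the one-dimensional span of $\frac{\partial u}{\partial t}(x)$. This says $\mathrm{rank}\,\Phi(x) \leq 1$, hence $\mathrm{rank}\,du_t(x) \leq 1$, contradicting the rank-$2$ assumption on the open set.

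So the statement reduces to the two inputs: the dichotomy consequences of Corollary~\ref{cor:unique1}, and the elementary fact that for a negatively curved manifold $(R^N(X,Y)Y;X)^\sharp < 0$ unless $X, Y$ are linearly dependent (which is just the definition of negative sectional curvature rewritten, using that $(R^N(X,Y)Y;X)^\sharp$ equals sectional curvature times the area-squared of the parallelogram spanned by $X,Y$). The conclusion ``rank $\leq 1$'' should be stated pointwise for each $u_t$; by the continuity already exploited in the proof of Corollary~\ref{cor:unique1} (passing to the closure of $\{du_t \neq 0\}$) one gets it everywhere. I would present this as a short proof, essentially: quote Corollary~\ref{cor:unique1}, note $c \neq 0$ when $u_0 \neq u_1$, then at each point where $du_t$ has rank $\geq 2$ pick two frame vectors whose $\Phi$-images are linearly independent, observe that at least one of them is independent from $\frac{\partial u}{\partial t}$, and derive a strictly negative contribution to a sum that is supposed to vanish.

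The main obstacle — really the only subtlety — is the bookkeeping around points where $du_t$ degenerates: the identities from Corollary~\ref{cor:unique1} are derived cleanly on $\{du_t \neq 0\}$ and extended by continuity, so one must be careful that the rank bound is claimed on the right set and that ``rank $\leq 1$'' is the correct contrapositive of ``$\Phi(x)$ has image in a line.'' I also want to make sure the argument uses \emph{negative} (not merely nonpositive) curvature essentially, which it does: under nonpositive curvature the curvature term can vanish with $du_t$ of full rank (e.g. flat target), so Corollary~\ref{cor:unique1} alone does not give the rank bound, and indeed the hypothesis ``negative curvature'' is exactly what makes the parallelism conclusion kick in. No genuinely hard analysis is needed here beyond what the second-variation machinery already provides.
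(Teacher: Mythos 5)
Your proposal is correct and follows essentially the same route as the paper: the paper's proof likewise notes that $c\neq 0$ forces $\frac{\partial u}{\partial t}$ to be nowhere zero and then invokes negativity of $R^N$ together with the vanishing of the curvature term from Corollary~\ref{cor:unique1} to conclude that $Q(du)^{p-2/2}du$, and hence $du$, has pointwise rank $\leq 1$. Your write-up merely makes explicit the linear-dependence characterization of negative sectional curvature and the rank bookkeeping that the paper leaves implicit.
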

\begin{proof} If $c=0$ in the above corollary, then $u_0 \equiv u_1$. Otherwise $\frac{\partial u}{\partial t}$ is never zero.
By the negativity of the sectional curvature, $ Q(du)^{p-2/2}du$ must have pointwise rank $\leq 1$ everywhere, hence also $du$.
\end{proof}

\subsection{Existence of $J_p$-minimizers}\label{sect:thpdirichlet}
The purpose of this section is to show existence of a minimizer $u$ of the functional $J_p$ subject to either  Neumann or Dirichlet boundary conditions. For the Dirichlet problem we fix a continuous map $f:M \rightarrow N$ and seek a minimizer in the homotopy class of $f$ relative to the boundary values of $f$. For the Neumann problem we only fix a homotopy class and no boundary condition at all. The main result of the section is the following:

\begin{theorem}\label{thm:pdirichlet}
Let $(M, g)$  be a compact Riemannian manifold with  boundary $ \partial M$ (possibly empty) and $\dim M=n$. Let $(N, h)$  be closed.   Then, for  $p>n$ there exists a minimizer $u \in W^{1,p}(M, N)$ of the functional $J_p$  with either the Neumann  or the Dirichlet boundary conditions  in a homotopy (resp. relative homotopy) class. 
\end{theorem}
\begin{proof} The proof of existence is fairly standard so  we will only give a sketch. Imbed $N$ in $R^K$ isometrically via the Nash embedding theorem.  Choose a minimizing sequence $u_i$ in the fixed homotopy class of maps to $N$.  This has a weakly convergent subsequence in $W^{1,p}(M,R^K)$ which converges in $C^0$ to a limit $u$ in $W^{1,p}(M, R^K)$. This weak limit $u$ has its image in $N$, is in the same homotopy class as $u_i$, and $J_p(u) \leq \liminf J_p(u_i)$ by lower semi-continuity of $J_p$ on $W^{1,p}(M,R^K)$.  Hence $J_p$ takes on its minimum at $u.$  
\end{proof}

By the Sobolev embedding theorem we also obtain 
\begin{corollary} \label{hoelderpk}The minimizer $u$ of Theorem~\ref{thm:pdirichlet} is in $C^\alpha$ for $\alpha=1-\frac{n}{p}$.
\end{corollary}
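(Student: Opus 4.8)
The plan is to deduce this directly from Theorem~\ref{thm:pdirichlet} together with the Sobolev (Morrey) embedding theorem. Theorem~\ref{thm:pdirichlet} produces a minimizer $u \in W^{1,p}(M,N)$ with $p > n = \dim M$. Since $M$ is a compact Riemannian manifold (with boundary), the classical Morrey embedding gives a continuous inclusion $W^{1,p}(M) \hookrightarrow C^{0,\alpha}(M)$ for $p > n$ with H\"older exponent $\alpha = 1 - \tfrac{n}{p}$; this applies componentwise after isometrically embedding $N$ into some Euclidean space $\R^K$ and viewing $u$ as a $W^{1,p}$ map into $\R^K$ whose image lies in $N$. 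Thus $u \in C^{0,\alpha}(M,N)$ with the stated exponent.

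The one point that warrants a sentence of care is that the Morrey embedding is stated for functions on domains in $\R^n$ (or on $\R^n$ itself), so one should either invoke the version for compact manifolds with boundary directly, or reduce to the Euclidean case by a finite covering of $M$ by coordinate charts (using that $M$ is compact) and a partition of unity, noting that H\"older continuity is a local property and that the transition maps are smooth, hence preserve both $W^{1,p}$ and $C^{0,\alpha}$ regularity with the same exponent. The boundary $\partial M$, if nonempty, causes no trouble since one can use boundary charts and the extension/restriction properties of Sobolev spaces on Lipschitz (here smooth) domains.

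There is essentially no obstacle here; the statement is an immediate consequence of the existence result and a standard embedding theorem, and the corollary should be read as simply recording this fact for later use. The only thing to make explicit is the identification of the H\"older exponent $\alpha = 1 - n/p$ coming from the Morrey inequality, and the routine localization argument that transfers the Euclidean embedding to the manifold setting.
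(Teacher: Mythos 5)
Your proposal is correct and matches the paper exactly: the paper derives this corollary in one line from the Sobolev (Morrey) embedding $W^{1,p}\hookrightarrow C^{0,\alpha}$ with $\alpha = 1-\frac{n}{p}$ for $p>n$. The localization details you spell out are the standard ones implicitly assumed in the paper.
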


\subsection{Variational construction of  the infinity-harmonic map}\label{sect:Varblm}
\begin{definition} Let $(M, g)$  be a compact Riemannian manifold with boundary $ \partial M$ (possibly empty) and $\dim M=n$ and Let $(N, h)$  be a closed Riemannian manifold.
Let $S  \subset M$  and $ f: S \rightarrow  (N, h) $ a  map. We call $f$ $L$-Lipschitz in $S$, if there exists $L>0$  such that for all $x, y \in S$
\[
d_h(f(x), f(y))\leq L d_g(x,y).
\]
The smallest possible $L$, 
\[
L_f(S)= \inf \{L \in \R: d_h(f(x), f(y))\leq Ld_g(x,y) \ \forall x,y \in S \}
\]
is called {\it{the Lipschitz constant}} of $f$ on $S$ and
\[
L_f(x)= \lim_{r \rightarrow 0}L_f(B_r(x)).
\]
the local Lipschitz constant of $f$ at $x $.
\end{definition}

\begin{definition}\label{def:lipdirichlet}
 A map $u \in Lip(M,N)$ is called a {\it{best Lipschitz map}} if $L_u \leq L_f$ for any map  $f \in Lip(M,N)$ homotopic to $u$ (either in the absolute sense or relative to the boundary depending on the context).  
Here $L_u=L_u(M)$ denotes the global Lipschitz constant of $u$  (and similarly for $f$).
\end{definition}

The next Lemma follows as in \cite[Lemma 5.3]{daskal-uhlen1} by the upper semicontinuity of the local Lipschitz constant:

\begin{lemma} \label{realized}  For a Lipschitz map $ f: M \rightarrow N$  with global Lipschitz constant $L=L_f(M)$ and a number $0< \mu \leq L$, the set
$ \{ x \in M: L_f(x) \geq \mu \}$ is non-empty and closed. In particular the set of maximum stretch
\[
 \lambda_f=\{ x \in M: L_f(x) =L \} 
\]
is non-empty and closed.
\end{lemma}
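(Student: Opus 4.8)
The plan is to treat the two assertions separately. Closedness is immediate from the upper semicontinuity of $x\mapsto L_f(x)$ established in Proposition~\ref{crandal0}$(i)$: that function is upper semicontinuous on $M$, so $\{x\in M: L_f(x)<\mu\}$ is open and hence $\{x\in M: L_f(x)\geq\mu\}$ is closed for every $\mu$. Since $f$ is $L$-Lipschitz on $M$ we have $L_f(x)\leq L$ for all $x$, so the maximum stretch set satisfies $\lambda_f=\{x\in M: L_f(x)\geq L\}$ and is closed by the same reasoning. Thus the whole content of the lemma is non-emptiness.

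For non-emptiness I would argue by contradiction: suppose $L_f(x)<\mu$ for every $x\in M$. Since $M$ is compact and $L_f$ is upper semicontinuous, $L_f$ attains its supremum, so $c:=\max_{x\in M}L_f(x)<\mu$. I then claim $f$ is $c$-Lipschitz on $M$; granting this, $L=L_f(M)\leq c<\mu\leq L$, which is absurd, so $\{x: L_f(x)\geq\mu\}$ is non-empty, and applying this with $\mu=L$ gives $\lambda_f\neq\emptyset$. To prove the claim, fix $x,y\in M$ and $\varepsilon>0$, and choose a path $\gamma\colon[0,1]\to M$ from $x$ to $y$ with $\mathrm{length}(\gamma)\leq d_g(x,y)+\varepsilon$ (possible since $d_g$ is the infimum of lengths of connecting paths). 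For each $t$, since $L_f(\gamma(t))\leq c$ and $r\mapsto L_f(B_r(\gamma(t)))$ decreases to $L_f(\gamma(t))$, there is $r(t)>0$ with $L_f(B_{r(t)}(\gamma(t)))<c+\varepsilon$; by compactness of $\gamma([0,1])$ pick a partition $0=t_0<t_1<\dots<t_n=1$ and points $s_i$ with $\gamma([t_{i-1},t_i])\subset B_{r(s_i)}(\gamma(s_i))$, so that $d_h\big(f(\gamma(t_{i-1})),f(\gamma(t_i))\big)\leq(c+\varepsilon)\,d_g\big(\gamma(t_{i-1}),\gamma(t_i)\big)\leq(c+\varepsilon)\,\mathrm{length}\big(\gamma|_{[t_{i-1},t_i]}\big)$. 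Summing and using the triangle inequality in $N$ gives $d_h(f(x),f(y))\leq(c+\varepsilon)\,\mathrm{length}(\gamma)\leq(c+\varepsilon)(d_g(x,y)+\varepsilon)$, and letting $\varepsilon\to 0$ proves the claim.

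The only genuinely non-formal ingredient is this last local-to-global step, upgrading the pointwise bound $L_f\leq c$ to the global bound $L_f(M)\leq c$; it relies on $(M,d_g)$ being a length space (automatic since $M$ is Riemannian) together with compactness of $M$, used first to get $c<\mu$ and then to make the chaining along $\gamma$ finite. I expect no further difficulty: the case $\partial M\neq\emptyset$ is covered without change, since near-minimizing paths between points of $M$ always exist and the covering argument is purely local.
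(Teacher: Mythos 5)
Your proof is correct and follows the same essential route as the paper: closedness from the upper semicontinuity of $x\mapsto L_f(x)$, and non-emptiness from compactness plus upper semicontinuity applied to a maximizing sequence (your contradiction argument via the attained maximum $c$ is just a reformulation of this). The one place you go further is the chaining argument upgrading the pointwise bound $L_f\leq c$ to the global bound $L_f(M)\leq c$; the paper's proof silently assumes this local-to-global fact when it asserts the existence of a sequence $x_i$ with $L_f(x_i)\nearrow L$, so your extra step is a legitimate (and welcome) filling-in rather than a detour.
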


Fix a homotopy class of maps from $M$ to $N$ (either absolute or relative to the boundary) and choose a Lipschitz map $f: M \rightarrow N$ in that homotopy class. We are going to construct a best Lipschitz map $u:  M \rightarrow N$
homotopic to $f$ as a limit as $p \rightarrow \infty$ of minimizers of the functional
\[
J_p(u)=\int_M Tr Q(du)^p*1 \  \ \mbox{where} \ \ u \in W^{1,p}=W^{1,p}(M, N).
\]

 For $N=\R$ (or maybe better $N=S^1$, since we are assuming the target is compact), this  construction is due to  Aronsson (cf. \cite{aron1},\cite{aron2} and \cite{lind}). See also \cite{daskal-uhlen1}.
  \begin{theorem}\label{thm:existence} 
 Given  a sequence  $p \rightarrow \infty$, there exists a subsequence  (denoted also by $p$) and a sequence of $J_p$-minimizers 
$u_p: M \rightarrow N$   homotopic to $f$ (either absolute or relative to the boundary)
such that   
\[
u=\lim_{p \rightarrow \infty} u_p \ \ \ \mbox{weakly in} \ \ W^{1,s} \ \forall s.
\]
Furthermore, $u_p \rightarrow u$ in $C^0$ and $u$ is a best Lipschitz map in the homotopy class. 
\end{theorem}
 
The proof is similar to the one given in the references above by replacing the $L^p$-norm with Schatten norms. We skip the details.

\begin{definition}\label{def:infhharm} Let $(M, g)$, $(N, h)$  be as before and $u: M \rightarrow N$ a best Lipschitz map in its homotopy class. The map  $u$ is called {\it $\infty$-harmonic} if there exists  a sequence   of $J_p$-minimizers 
$u_p: M \rightarrow N$   homotopic to $u$, {\it called the $p$-approximations of $u$},
such that  
$u=\lim_{p \rightarrow \infty} u_p$
weakly in $W^{1,s}$ for all $s$ (and thus also in $C^0$).
\end{definition}

 \begin{lemma}\label{pintconvto} If $u$ has Lipschitz constant $L$ and $u_p$ is a $p$-approximation,
\[
\lim_{p \rightarrow \infty} J_p^{1/p}(u_p) =  L.
\]
\end{lemma}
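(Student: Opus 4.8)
The plan is to sandwich $\lim_{p\to\infty}J_p^{1/p}(u_p)$ between $L$ from above and below, using the minimizing property of $u_p$ for the upper bound and the uniform convergence $u_p\to u$ together with lower semicontinuity for the lower bound. For the upper bound, since $u$ itself is Lipschitz with constant $L$, Proposition~\ref{crandal0}$(iv)$ gives $\|du\|_{sv^\infty}=L$, hence $|du|_{sv^\infty}\le L$ a.e.; by Proposition~\ref{shat1}$(iii)$ (or directly $|du|_{sv^p}^p=\sum_i s_i(du)^p\le r\,s_1(du)^p\le r L^p$ with $r=\dim N$) we get $J_p(u)=\int_M|du|_{sv^p}^p*1\le r\,\mathrm{vol}(M)\,L^p$. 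Since $u_p$ is a $J_p$-minimizer in the homotopy class of $u$, $J_p(u_p)\le J_p(u)$, so
\[
J_p^{1/p}(u_p)\le \big(r\,\mathrm{vol}(M)\big)^{1/p}L \xrightarrow[p\to\infty]{} L,
\]
which yields $\limsup_{p\to\infty}J_p^{1/p}(u_p)\le L$.

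For the lower bound I would reuse the argument already given in the proof of Lemma~\ref{lemma:globalmin}. Fix $s>n$. For $p>s$, Lemma~\ref{lemma:normlim}$(ii)$ gives
\[
\frac{1}{(n\,\mathrm{vol}(M))^{1/s}}J_s(u_p)^{1/s}\le \frac{1}{\mathrm{vol}(M)^{1/p}}J_p(u_p)^{1/p}.
\]
Since $u_p\rightharpoonup u$ weakly in $W^{1,s}$, lower semicontinuity of the $J_s$-functional (convexity, cf. \cite[Chapter 8.2]{evans}) gives $J_s(u)\le\liminf_{p\to\infty}J_s(u_p)$, so
\[
\frac{1}{(n\,\mathrm{vol}(M))^{1/s}}J_s(u)^{1/s}\le \liminf_{p\to\infty}J_p(u_p)^{1/p}.
\]
Now let $s\to\infty$: by Lemma~\ref{lemma:normlim}$(i)$ the left side converges to $\|du\|_{sv^\infty}$, which equals $L$ by Proposition~\ref{crandal0}$(iv)$ (using convexity of $M$ and that $u$ is best Lipschitz with constant $L$). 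Hence $L\le\liminf_{p\to\infty}J_p^{1/p}(u_p)$, and combining the two inequalities gives the claim.

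The only delicate points are bookkeeping ones: making sure the weak $W^{1,s}$-convergence (guaranteed by Definition~\ref{def:infhharm} of a $p$-approximation) is legitimately invoked for each fixed $s$ before sending $s\to\infty$, and that the constants $(r\,\mathrm{vol}(M))^{1/p}$, $(n\,\mathrm{vol}(M))^{1/s}$ are harmless in the limit. I do not anticipate a genuine obstacle here; the statement is essentially a repackaging of the estimates already established in Lemmas~\ref{limminimizer} and~\ref{lemma:globalmin}, and the content is that the two-sided bound collapses to the equality $\lim_{p\to\infty}J_p^{1/p}(u_p)=L$.
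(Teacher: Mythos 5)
Your proof is correct and follows essentially the same route as the paper: the minimizing property $J_p(u_p)\le J_p(u)$ together with $J_p^{1/p}(u)\to \|du\|_{sv^\infty}=L$ for the upper bound, and the chain from Lemma~\ref{lemma:normlim}$(ii)$ plus lower semicontinuity for the lower bound. The only difference is cosmetic — the paper phrases the lower bound as a contradiction (assuming $\liminf=a<L$ and rerunning the argument of Lemma~\ref{lemma:globalmin} to contradict best Lipschitzness), whereas you run the same inequalities directly.
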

\begin{proof}The fact that $u_p$ is a $J_p$-minimizer and Lemma~\ref{lemma:normlim}$(i)$  imply
\[
 J_p^{1/p}(u_p) \leq J_p^{1/p}(u) \rightarrow L.
\]
Hence the $\limsup$ is less than equal to $L$. On the other hand, if $\liminf=a<L$, then proceeding as it the proof of Theorem~\ref{thm:existence}, there exists a Lipschitz map $w$ such that
$L_w\leq a<L$
which contradicts the best Lipschitz constant.
\end{proof}

The following Lipschitz approximation theorem will be needed in Section~\ref{lipqgoesto1} (cf. \cite[Theorem 4.4 and 4.6]{karcher}.)

\begin{theorem}\label{thmgreenewu} If  $f: M \rightarrow N$ is a Lipschitz map, then there exists a sequence
of smooth maps $f_k: M \rightarrow N$ such that $f_k \rightarrow  f$ in $C^0$  and the Lipschitz constants converge,
$L_{f_k} \rightarrow L_f.$
\end{theorem} 

We end the section with the definition of the canonical lamination. 
\begin{definition}\label{canlamin}Let $(M,g)$ and $(N,h)$ be closed hyperbolic surfaces. Given a best Lip map $f: M \rightarrow N$ of $L_f=L>1$, let $\lambda_f$ be its maximum stretch locus (cf. Lemma~\ref{realized}). Let $\mathcal F$ denote the collection of best Lipschitz maps in a homotopy class and set
\[
\lambda=\cap_{f \in \mathcal F} \lambda_f.
\]
We call $\lambda$ {\it{the canonical lamination}} associated to the hyperbolic metics $g$, $h$ and the fixed homotopy class.   
\end{definition}
From \cite{kassel} we know: 

\begin{theorem} 
\begin{itemize}
\item $(i)$  The closed set $\lambda$ is a geodesic lamination on $(M, g)$ and, for any $f \in \mathcal F$,  $f(\lambda)$ is a geodesic lamination on $(N, h)$. Furthermore, for any leaf of $\lambda$, $df$ multiplies arc length by the best Lipschitz constant $L$ (cf. \cite[Lemma 5.2]{kassel}).
\item $(ii)$ There exists $\hat f \in \mathcal F$ such that $\lambda_{\hat f}=\lambda$. We call such a map {\it optimal } (cf. \cite[Lemma 4.13]{kassel}).
\end{itemize}
\end{theorem}

\begin{remark} 
If the homotopy class is that of the identity an easy application of Gauss-Bonet implies that for any best Lipschitz map homotopic to the identity $L_f\geq 1$ and $L_f=1$ iff $f$ is an isometry. Moreover, the canonical lamination is equal to Thurston's chain recurrent lamination $\mu(g,h)$ associated to the hyperbolic structures $g,h$. (cf. \cite[Theorem 8.2]{thurston} and \cite[Lemma 9.3]{kassel}).
\end{remark}

 \section{Conservation laws from the symmetries of the target}\label{sec3}
In the next two sections, we derive formulas for conservation laws given any solution $u_p$ of the Euler Lagrange equations for $J_p.$  Noether's theorem states that for every symmetry of the integrand in a calculus of variations problem (in $\R^n$) and every solution of the Euler Lagrange equations, there exists a divergence free vector field.  In the case of maps between hyperbolic surfaces, the local $so(2,1)$ symmetries of the metric in $N$ and $M$ yield symmetries of the integrand $Tr Q(df)^p *1.$  If we equate vector fields on $M$ with one forms via an area two form, in each case we get a closed $so(2,1)$ valued one form as a conservation law.  Because the symmetries are local, these are sections of flat $so(2,1)$ bundles over $M.$
In this section we derive the conservation laws arising from the symmetry of $N$, and in the next section we derive those arising from the symmetry of $M.$
 We develop the geometry to describe these conservation laws, and we derive the formulas from the geometry, not directly from Noether's theorem.  Admittedly  we would not have found the formulas without being aware of Noether's theorem.

\subsection{The geometry of the hyperboloid}\label{geomhyp}
We review some basic geometry.   Let $e^\sharp  = diag(1,...,1,-1)$.  For $X \in \R^{n,1}$ let    $X^\sharp  = (e^\sharp X)^T$. The inner product in $\R^{n,1}$ is
\[  
( X,Y)^\sharp  =  X^\sharp Y =Y^\sharp X.
\] 
The associated transpose on linear maps $B$  is $B^\sharp  = e^\sharp B^Te^\sharp$.  The group $SO (n,1)$ consists of $(n+1) \times (n+1)$ matrices with determinant one that preserve the inner product. Equivalently, $g \in G$ if $det g=1$ and  $g^{-1} = g^\sharp$.  $B$ is in the Lie algebra $\mathfrak g=so(n,1)$  iff $Tr B=0$ and  $B^\sharp  = -B$.   For $X,Y \in \R^{n,1}$,   $YX^\sharp  - X^\sharp Y$ is a skew symmetric matrix (with respect to $^\sharp$). Define
\begin{equation}\label{wedgedef}
X \times Y=YX^\sharp  - X Y^\sharp \in  \mathfrak g.
\end{equation}
  Let  $(A,B)^\sharp= Tr A B $ denote the Killing form on $\mathfrak g$ 
 which has signature $(n, n(n-1)/2)$. 
 In particular for $n=2$ the Killing form is (up to a constant) a flat Lorenzian metric on $\mathfrak g$ of signature $(2,1)$  isometric to $\R^{2,1}$.
 Let
\[
\HH^n = \HH = \{X \in \R^{n,1}: ( X,X )^\sharp  = -1 \ and \ X_{n+1} \geq 1 \} 
\] 
and $G=SO^+(n,1)$ the index two subgroup of $SO(n,1)$ preserving $\HH$. 

%
For $X$ a point in $\HH$, let $\Pi(X)$ be the orthogonal in $(, )^\sharp$  projection onto $T_X\HH$ and $\Pi^\perp(X)$ be the orthogonal projection onto $X$.  
Then
\begin{equation*}
 \Pi^\perp(X)=-XX^\sharp, \ \Pi(X)=I+XX^\sharp. 
 \end{equation*}
For $v \in \R^{1,n}$,  denote the projection
\begin{equation} \label{proj}
\Pi(X)v=v_X=v+(v,X)^\sharp X.
 \end{equation}
This is similar to the formulas for $S^n$ in $\R^{n+1}$ with the change in sign in $XX^\sharp$ due to the indefinite metric in $\R^{n,1}$.   
  \begin{lemma}\label{metricHemb}The inner product $(,)^\sharp$ restrictred to the tangent bundle of $\HH$ is a Riemannian metric which agrees with the standard metric of the hyperbolic space.
  \end{lemma}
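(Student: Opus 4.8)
The plan is to verify directly that the restriction of the ambient indefinite form $(\cdot,\cdot)^\sharp$ to each tangent space $T_X\HH$ is positive definite, and then identify the resulting Riemannian metric with the hyperbolic metric, using the homogeneity of $\HH$ under $G=SO^+(n,1)$. The starting observation is that $\HH$ is a level set of the function $f(X)=(X,X)^\sharp$, so $T_X\HH=\ker df_X=\{Y:(X,Y)^\sharp=0\}=X^\perp$, which is exactly the image of the projection $\Pi(X)$ appearing in \eqref{proj}. So the first step is to record that $T_X\HH=X^\perp$.

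First I would check positive definiteness at the base point $X_0=(0,\dots,0,1)$. There $X_0^\perp=\{Y:Y_{n+1}=0\}=\R^n\times\{0\}$, and on this subspace the form $(\cdot,\cdot)^\sharp$, whose matrix is $e^\sharp=\mathrm{diag}(1,\dots,1,-1)$, restricts to the standard Euclidean inner product on $\R^n$ — manifestly positive definite, and in fact equal to the standard metric of $\HH^n$ at that point. Next I would propagate this to an arbitrary $X\in\HH$: write $X=g(X)X_0$ with $g(X)\in G$, so $T_X\HH=g(X)X_0^\perp$ by the lemma preceding the statement, and since $g(X)$ preserves $(\cdot,\cdot)^\sharp$ it carries the positive-definite form on $X_0^\perp$ isometrically onto the form on $X^\perp$. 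Hence $(\cdot,\cdot)^\sharp|_{T_X\HH}$ is positive definite for every $X$, i.e. a genuine Riemannian metric on $\HH$.

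It remains to identify this metric with the standard hyperbolic metric. The cleanest route is to recall (or cite as standard) that the hyperboloid model of $\HH^n$ is \emph{defined} as $\{X:(X,X)^\sharp=-1,\ X_{n+1}\geq 1\}$ equipped with precisely the induced form from $\R^{n,1}$; so the identification is essentially tautological once one has set up the notation as in this section. Alternatively, and more self-containedly, one notes that $G=SO^+(n,1)$ acts transitively on $\HH$ by isometries of $(\cdot,\cdot)^\sharp|_{T\HH}$, with isotropy group at $X_0$ equal to $SO(n)$ acting as the full rotation group of $T_{X_0}\HH$; a $G$-invariant metric on $G/SO(n)$ is determined up to a positive scalar by its value at one point, and we have just computed that value to agree with the standard one at $X_0$. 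Either way one concludes the metrics coincide.

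The argument is essentially bookkeeping, so there is no real obstacle; the only thing requiring a little care is the very first step — confirming that $T_X\HH$, defined as the kernel of $df$ for $f(X)=(X,X)^\sharp$, really equals the $^\sharp$-orthogonal complement $X^\perp$, and that this complement is transverse to the line $\R X$ (so that \eqref{proj} holds), which follows from $(X,X)^\sharp=-1\neq 0$. Once that is in place, equivariance under $G$ does all the remaining work.
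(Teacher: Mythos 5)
Your argument is correct. The paper actually states this lemma without any proof, treating it as the standard fact that the hyperboloid model of $\HH^n$ carries the metric induced from $\R^{n,1}$; your verification (positive definiteness at $X_0$, propagation by the transitive isometric $G$-action, and identification with the standard metric) is exactly the standard argument and correctly fills in what the paper leaves implicit.
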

  
  We now discuss the role of infinitesimal isometries.
An element $w$ of the Lie algebra $\mathfrak g$ of $SO(n,1)$ defines a vector field on $\HH$ by setting   $w(X)=wX \in T_X\HH$. Here $X \in \HH \subset \R^{n,1}$ and  $w$ acts on $X$ as a skew-adjoint endomorphism with respect to $(,)^\sharp$. 
Note that $w X \in  T_X\HH$. Indeed,
\begin{eqnarray}\label{infisoacts}
X^\sharp w X &=& (X, w X)^\sharp
= (w X, X)^\sharp \nonumber\\
 &=& (w X)^\sharp X
 = X^\sharp w^\sharp X
 =-X^\sharp w X.
\end{eqnarray}
Alternatively, 
$w(X)=\frac{d}{dt} \Big|_{t=0}e^{tw}X$. 
Let
\[
\alpha_X: \mathfrak g \rightarrow T_X\HH \subset \R^{n,1}, \ \  w \mapsto wX
\]
and
\[
\beta_X: \R^{n,1} \rightarrow \mathfrak g, \ \  v \mapsto v \times X.
\]
\begin{proposition}\label{helpemb0} 
\begin{itemize}
\item $(i)$ For $v \in \R^{n,1}$, $\alpha_X \circ \beta_X(v)=v+(v,X)^\sharp X$. 
\item $(ii)$ For $v \in \R^{n,1}$,
$Tr(\beta_X(v) \beta_X(v))=2 (\alpha_X \beta_X(v),v)^\sharp.$
Thus, the adjoint  $\beta_X^\sharp=2\alpha_X$. 
\item $(iii)$ The map $1/\sqrt 2 \beta_X$ identifies $T_X\HH$  isometrically with its image $\mathfrak p_X \subset ad(E)$.
\item $(iv)$  $\mathfrak g=\ker(\alpha_X)  \oplus  \mathfrak p_X$ is an orthogonal direct sum with respect to the Killing form. The inner product is positive on $\mathfrak p_X$ and negative on $\ker(\alpha_X)$ and corresponds pointwise to a Cartan decomposition of $\mathfrak g$ as a compact Lie algebra and its complement.
\item $(v)$ Given $\xi \in \mathfrak g$  define $\xi_X$ its orthogonal projection onto $\mathfrak p_X$. Then $\xi_X=\beta_X \circ \alpha_X(\xi).$
\end{itemize}
 \end{proposition}
\begin{proof}For $(i)$,
\[
\alpha_X \circ \beta_X(v)=(v \times X) X=Xv^\sharp X-vX^\sharp X=v+(v,X)^\sharp X.
\]
For $(ii)$, let $v \in \R^{n,1}$. Then,
\begin{eqnarray*}  
Tr(\beta_X(v), \beta_X(v))  &=& Tr(X v^\sharp-vX^\sharp)(X v^\sharp-vX^\sharp) \\
 &=& 2 (\alpha_X \beta_X(v),v)^\sharp+2(X,v)^{\sharp 2}\\
 &=&2 (v+(v,X)^\sharp X, v)^\sharp.
\end{eqnarray*} 
In the above we used  $X^\sharp X=-1$ and the fact that $Trvv^\sharp=(v,v)^\sharp$,  $TrXv^\sharp=TrvX^\sharp=(X,v)^\sharp$.

For $(iii)$ note that if $v \in T_X\HH$, then $\alpha_X \circ \beta_X(v)=v$. It follows that $1/\sqrt 2 \beta_X$ restricted to $T_X\HH$ is injective and  identifies $T_X\HH$  isometrically with its image in $\mathfrak g$. 

$(iv)$ is immediate from the above.
To prove $(v)$, first note that if $\xi \in \ker \alpha_X$ then the statement clearly holds. Now suppose that $\xi$ is perpendicular to the kernel of $\alpha_X$. By $(i)$, $\alpha_X(\beta_X(\alpha_X(\xi)))=\alpha_X(\xi)$, thus $\beta_X(\alpha_X(\xi))-\xi \in \ker \alpha_X$. But since both $\beta_X(\alpha_X(\xi))$ and $\xi$ are perpendicular to the kernel of $\alpha_X$, the result follows.
\end{proof}

Let $u:M \rightarrow N$ where $N$ is assumed to be hyperbolic but $M$ can be arbitrary. 
More generally, consider a representation $\rho: \pi_1(M) \rightarrow SO^+(n,1)$ and a $\rho$-equivariant map $\tilde u: \tilde M \rightarrow \HH$. For example, $\tilde u$ can be the lift of  $u$ and  $\rho$  the homomorphism induced by $u$ on the fundamental groups. 
Define  the flat bundles on $M$
\begin{equation}\label{exact45231}
 E=\tilde M \times_\rho R^{n,1}, \ ad(E)=\tilde M \times_{Ad(\rho)} \mathfrak g.
\end{equation}
For $u:M \rightarrow N$,  $u^{-1}(TN)$ is a subbundle of $E$ and $du$  a section of $T^*(M) \otimes E$.
%
%
%
%
%
%

The definition of the maps $\alpha$ and $\beta$ before can be modified to include the map $u$.
Since the construction is local we will identify $N$ with $\HH$ and $u$ with $\tilde u$. We view the flat Lie algebra bundle of skew tensors  $ad(E)$
as the space of local isometries. Given $\phi \in ad(E)$,
let $\xi = \phi u \in E$ (in other words, for each $x \in M$ we let $\xi(x) = \phi(x) u(x) \in \R^{n,1}$ For simplicity we drop the dependence on $x$).  By (\ref{infisoacts}),  $\xi=\phi u$ is tangent to $\HH$ at $u$ so there exists a map
\[
\alpha_u: ad(E) \rightarrow u^{-1}(TN) \subset E, \ \  \phi \mapsto \phi u.
\]
Let
\[
\beta_u: E \rightarrow ad(E), \ \  v \mapsto v \times u.
\]
The maps  $\alpha_u$ and $\beta_u$ will play an important role for the rest of the paper. The following follows immediately from Proposition~\ref{helpemb0}:
\begin{proposition}\label{helpemb} 
\begin{itemize}
\item $(i)$ For $v \in E$, $\alpha_u \circ \beta_u(v)=v+(v,u)^\sharp u$. 
\item $(ii)$ For $v \in E$,
\[
Tr(\beta_u(v) \beta_u(v))=2 (\alpha_u \beta_u(v),v)^\sharp.
\]
Thus, the adjoint  $\beta_u^\sharp=2\alpha_u$. 
\item $(iii)$ The map $1/\sqrt 2 \beta_u$ identifies $u^{-1}(TN)$  isometrically with its image $\mathfrak p_u \subset ad(E)$.
\item $(iv)$  $ad(E)=\ker(\alpha_u)  \oplus  \mathfrak p_u$ is an orthogonal direct sum with respect to the Killing form. The inner product is positive on $\mathfrak p_u$ and negative on $\ker(\alpha_u)$ and corresponds pointwise to a Cartan decomposition of $\mathfrak g$ as a compact Lie algebra and its complement.
\item $(v)$ Given $\xi \in ad(E),$ we have $\xi_u=\beta_u \circ \alpha_u(\xi).$
\end{itemize}
 \end{proposition}

\subsection{The Euler-Lagrange equations revisited} 
In this setting, the Noether currents are easily obtained from the Euler Lagrange equations.  We suppose $u: M \rightarrow N$ is in $W^{1,p}$ and 
$\xi $ is in the flat $R^{n,1}$ bundle $E$. We use the notation introduced in (\ref{proj}) to write
\[
        \Pi(u)\xi = \xi_u = \xi + (\xi,u)^\sharp u.
\]
Then $\xi_u$ is a section of  a $W^{1,p}$ sub-bundle $E_u$ of $E$ which is isomorphic to the pullback tangent bundle $u^{-1}(TN)$. Here
\[
      E_u = \{\xi \in E_x: \xi_u(x) = \xi, \forall x \in M \}.
\]
Similarly, for $\psi \in ad(E)$ we denote $\psi_u$ its projection onto the positive definite part $\mathfrak p_u$ in the Cartan decomposition.

If $E^\sharp$ denotes the dual bundle and $W \in T^*(M) \otimes E$,
then
\begin{equation}\label{newfQ}
              Q(W)^2 = (W ;W^\sharp) \in E \otimes E^\sharp
\end{equation}
where $( ; )$ is the inner product in the cotangent space $T^*M$. The tensor $Q^2$ is symmetric in $E \otimes E^\sharp$, and when $W_u = W$, $Q(W)^2$ is non-negative and has a non-negative square root which  sends $E_u$ to itself.

Assume now that $u = u_p$ satisfies the $J_p$-Euler Lagrange equations. Apply these calculations for $Q$ to translate the Euler Lagrange equations obtained in Proposition~\ref{prop:firstvar2} to the present setting.  The formula
\[
             Q(du)^2 = (du;du^\sharp)
\]
is consistent with both descriptions.

   We will also collect some shortcuts in notation:  
\[
<A,B> = \int_M (A; B)^\sharp *1
\]
\[   
 <Q(W)^p> = <Q(W)^{p-2}W,W> = \int_M TrQ(W)^p *1=||W||^p_{sv^p}.
\]
If $\Omega \subset M$, then $<A,B>_\Omega$ refers to the integral over $\Omega$.
Likewise for $<Q(W)^p>_\Omega.$ More generally we will use the notation 
$<f>_\Omega= \int_\Omega f*1$.

\begin{proposition}\label{piati}  In this notation, the Euler-Lagrange equations for $u=u_p$ can be written
\[
 < Q(du)^{p-2}du,  d\xi  > = 0.
\]
for all  $\xi \in \Omega^0(E)$  such that $\xi_u  = \xi$. 
\end{proposition}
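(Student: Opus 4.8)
The plan is to show that the Euler--Lagrange equations of Proposition~\ref{prop:firstvar2}, namely $\int_M(Q(du)^{p-2}du; D\phi)^\sharp*1=0$ for all $\phi \in \Omega^0(u^{-1}(TN))$, are literally the same equations as $< Q(du)^{p-2}du, d\xi> = 0$ for all $\xi \in \Omega^0(E)$ with $\xi_u = \xi$, once everything is transported to the flat bundle $E$ via the construction of Section~\ref{sec3}. The only subtlety is the difference between the pullback Levi--Civita connection $D = D_u$ on $u^{-1}(TN)$ appearing in Proposition~\ref{prop:firstvar2} and the flat connection $d$ on $E$ appearing here.

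First I would observe that by Lemma~\ref{recalllemm} and the definition of $E_u$, a section $\xi \in \Omega^0(E)$ with $\xi_u = \xi$ is exactly a section of $E_u$, which equals $u^{-1}TN$ in the smooth case (and in general is the correct $W^{1,p}$ substitute). Thus the test sections match on both sides. Next, by Lemma~\ref{decomphig} we have the decomposition $d = D_u + A_u$ on $E$, where $D_u$ restricted to $u^{-1}(TN)$ is precisely the pullback Levi--Civita connection, and $A_u$ is the second fundamental form, which by Proposition~\ref{prop:2fundform} equals $du \times u = \beta_u(du)$ and takes values in the normal direction (i.e.\ $A_u \xi \in \Omega^1(u^{-1}(TN^\perp))$ when $\xi \in \Omega^0(u^{-1}TN)$). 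Therefore $d\xi = D_u\xi + A_u\xi$ with $D_u\xi$ tangential and $A_u\xi$ normal.

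The key step is then the pairing computation: since $Q(du)^{p-2}du$ takes values in $T^*M \otimes u^{-1}(TN)$ (it is $Q^{p-2}_{ij}du^j F^i$, tangential in the $E$-factor), and since the splitting $E = u^{-1}(TN) \oplus u^{-1}(TN^\perp)$ is orthogonal with respect to $(,)^\sharp$ (this orthogonality being exactly what makes $\Pi(u)$ the orthogonal projection, cf.\ (\ref{proj})), the normal contribution drops out:
\[
(Q(du)^{p-2}du; d\xi)^\sharp = (Q(du)^{p-2}du; D_u\xi)^\sharp + (Q(du)^{p-2}du; A_u\xi)^\sharp = (Q(du)^{p-2}du; D\xi)^\sharp.
\]
Integrating over $M$ and invoking Proposition~\ref{prop:firstvar2} gives the claim. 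Conversely, the same identity shows that the stated equation implies the Euler--Lagrange equations, so the two formulations are equivalent.

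I do not expect a serious obstacle here; the content is entirely bookkeeping about which bundle and connection one works in, and the one thing to be careful about is the regularity issue flagged in the paragraph before the proposition: when $u$ is only $W^{1,p}$ and not smooth, one should phrase the argument using $E_u$ and the tensorial identity $Q(du)^2 = (du; du^\sharp)$ established just above, rather than appealing to the smooth submanifold picture, but the orthogonal-splitting argument is insensitive to this and goes through verbatim. The mild point worth a sentence is justifying that $A_u \xi$ is $(,)^\sharp$-orthogonal to $Q(du)^{p-2}du$ pointwise a.e., which follows because $A_u\xi = (d\Pi(u))(\Pi(u) - \Pi^\perp(u))\xi$ lands in the span of $u$ (normal direction) whereas $Q(du)^{p-2}du$ annihilates that direction by $Q(du)^2 u = (du; du^\sharp)u$ and $(du, u)^\sharp = 0$.
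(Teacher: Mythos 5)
Your proposal is correct and is essentially the paper's own argument: the paper writes $D_u\xi=\Pi(u)\,d\xi=(d\xi)_u$ and drops the projection because $Q(du)^{p-2}du$ is tangent, which is exactly your observation that the normal component $A_u\xi$ of $d\xi$ pairs to zero against a tangential quantity under the $(,)^\sharp$-orthogonal splitting. The extra care you take about the $W^{1,p}$ setting and the pointwise orthogonality is welcome but does not change the route.
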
 

\begin{proof} The Euler-Lagrange equations are
 \[
< Q(du)^{p-2}du, D_u\xi  > = 0.
\] 
where $\xi$ is tangent i.e $\xi_u  = \xi$. By definition,
$D_u\xi =\Pi(u)d\xi=(d\xi)_u $.  But we can drop the $\Pi$ from the formula since we are evaluating it against a tangent vector, $Q(du)^{p-2}du$. 
\end{proof}

Recall the map $\beta_u$ identifying $T\HH$ with a subbundle of the Lie algebra bundle $ad(E)$ and that 
$Q(du)^{p-2}du$ is tangent. In view of the Lemma above it is natural to identify $Q(du)^{p-2}du \in T^*M \otimes T\HH$ with   $(Q(du)^{p-2}du )\times u \in  T^*M \otimes ad(E)$. For simplicity we denote $S_{p-1}(du)= Q(du)^{p-2}du$. 
With this understood, 

\begin{theorem}\label{Prop:Elag-bund}   Let $u=u_p$ satisfy the $J_p$-Euler-Lagrange equations.  Then for any section $\phi \in \Omega^0(ad(E))$, 
\[
 <S_{p-1}(du) \times u, d \phi>= 0.
 \]
 Equivalently, if $V=*S_{p-1}(du) \times u$, then $dV=0$ in the distribution sense.
 \end{theorem}

\begin{proof} By  Proposition~\ref{piati}, since $\phi u$  is automatically in the tangent space of $u$,  for all $\phi \in \Omega^0(ad(E))$
\[
 \int_M  ( Q(du)^{p-2}du; d(\phi u))^\sharp  *1 = 0.
 \]
We work entirely on the integrand
\[
 ( Q(du)^{p-2}du; d(\phi u))^\sharp  =  (Q(du)^{p-2}du; d\phi u )^\sharp  + (Q(du)^{p-2}du;\phi du)^\sharp.
 \]
Rewrite the first term on the left as
\begin{eqnarray*}
 (Q(du)^{p-2}du; d\phi u)^\sharp  &=& Tr (Q(du)^{p-2}du; (d\phi u)^\sharp )\\
&=&Tr( Q(du)^{p-2}duu^\sharp ; d\phi^\sharp )\\
 &=& 1/2 Tr(S_{p-1}(du) \times u; d\phi).
 \end{eqnarray*}
The  last identity follows from the fact that $d\phi$ is skew so we can replace $Q(du)^{p-2}duu^\sharp$ by its skew adjoint part $S_{p-1}(du) \times u=(Q(du)^{p-2}du) \times u$.
The second term on the right can also be rewritten as
\[
 (Q(du)^{p-2}du; \phi du)^\sharp  = Tr(Q(du)^{p-2}du;du^\sharp  \phi^\sharp) =Tr (Q(du)^p \phi^\sharp) = 0.
\]
Here we use the fact that $Q$ is symmetric and $\phi$ skew-symmetric.
\end{proof}

An infinitesimal isometry $\phi$ of $N$ can be viewed a section of $ad(E)$ (or a section of the pullback bundle on the universal cover $\tilde M=\HH$) that is constant, i.e $d\phi=0$ and  $\xi=1/2 \alpha_u (\phi)$ denotes the corresponding (local) Killing field.

\begin{corollary}\label{Elag-bund2}For each infinitesimal isometry $\phi \in \Omega^0(ad(E))$ the 1-form on $M$ given by $\omega_\phi^M=(*S_{p-1}(du),  \alpha_u(\phi))^\sharp$ is closed.
\end{corollary}
\begin{proof}
\begin{eqnarray*}
d\omega_\phi^M &=&d Tr(*S_{p-1}(du) u^\sharp \phi^\sharp)\\
&=&-1/2 dTr(*S_{p-1}(du)\times u \phi) \\
&=&-1/2 Tr(d*(S_{p-1}(du)\times u) \phi)+1/2 Tr(*S_{p-1}(du)\times u d\phi) \\
&=& 0.
\end{eqnarray*}
In the second equality  above we replaced, as in the proof of Theorem~\ref{Prop:Elag-bund}, $*S_{p-1}(du) u^\sharp$ by its skew-symmetric part. In the last equality we used Theorem~\ref{Prop:Elag-bund} and the fact $d\phi=0$.
\end{proof}

The closed 1-form $\omega_\phi^M$ is {\it the Noether current associated to the infinitesimal symmetry $\phi$.} The closedness of the forms $\omega_\phi^M$ is equivalent to Theorem~\ref{Prop:Elag-bund}. One direction is proved in Corollary~\ref{Elag-bund2}. For the converse, let $\tilde V$ denote the lift of $V$ to the universal cover.
Also, pick an orthonormal  basis $\phi _i$ of constant sections of the pullback of the Lie algebra bundle to the universal cover and write
\begin{equation}\label{formuV}
\tilde V=\sum_i (\tilde V, \phi_i)^\sharp \phi_i=\sum \tilde \omega_{\phi_i}^M \phi_i.
\end{equation}
Since $d \phi_i=0$, it follows that $d\omega_{\phi_i}^M=0 \implies d\tilde V=0$ and hence also $dV=0$.

 \subsection{The dual equations}\label{dual}
 In our previous paper, where $N = S^1$, we associated with a solution of the $J_p$-Euler-Lagrange equations of $u = u_p$ a dual (function) $v = v_q$, with 
 \begin{equation}\label{form:conjugate}
 1/p + 1/q = 1.
 \end{equation} 
 The formula was $V=*S_{p-1}(du) = dv$. The limit of the $v$'s,  as $q \rightarrow1$,  turned out to be a key ingredient in the theory, as it defined a transverse measure associated to the maximum stretch lamination.  We follow  this construction in this paper, although the situation is more complicated. 

Recall from the preceding page that if $u = u_p$ is a solution of the $J_p$-Euler-Lagrange equations, then  $Z = Z_q = *S_{p-1}(du) = *Q(du)^{p-2}du \in \Omega^1(E)$ is a 1-form closed with respect to the covariant derivative $D_u Z = (dZ)_u = 0$.   To get a closed 1-form,  let $V = Z \times u$, which is in $\Omega^1(ad E)$.  Now $dV = 0$ and $V = dv$ locally, and the limits of $V = V_q$ as $q \rightarrow 1$ will be the transverse measure. The discussion of these details is left to the next paper. Technically $Z$ and $V$ are now only in $L^q$, although in Section~\ref{sect:regtheor} we show they are in $L^s$ for all $s$. We state the next theorem for both $Z$ and $V$. Computationally they are equivalent.  The difficulties arise, not from the non-linearity, but from the fact that $E $ and $ad(E)$ have indefinite metrics.

We treat $Z  = *S_{p-1}(du)$ first.
 We would like to minimize the functional $J_q$ within the cohomology class of $Z$. In other words, we would like to minimize the $q$-Schatten norm among variations $Z+ d\xi$ for $\xi \in \Omega^0(E)$.  The question is:  What is the optimal choice for $d\xi$?
 
 In order to make sense of measuring $Z + d\xi$, we need to project into a positive definite subspace, so we will measure this by projecting into the tangent space of $\HH$ at $u$, and measure $(Z + d\xi)_u = Z + (d\xi)_u.$   In order to get  a manageable estimate, we will then also to restrict $\xi = \xi_u.$  Note that then $D_u \xi = (d\xi)_u$ is then the covariant derivative induced in the pull back of the tangent bundle of $\HH.$  As before, 
\[
   Q(Z + (d\xi)_u)^2 = (Z + (d\xi)_u; Z+ (d\xi)_u)^\sharp.
   \]

%
%

\begin{theorem}\label{TTheorem A6}   Let $u=u_p$ satisfy the $J_p$-Euler-Lagrange equations and let $Z=*S_{p-1}(du)$. For $\xi \in \Omega^0(E)$ let 
\[
J_q(\xi) =  \int_M Tr Q(Z + (d\xi)_u)^q *1=||Z+ (d\xi)_u)||^q_{sv^q}.
\]
Then the minimum of $J_q$ over all $\xi = \xi_u$ is taken on at $\xi= 0$. Furthermore, 
\begin{equation}\label{eulags0}
D_u *Q(Z)^{q - 2}Z  = 0.
\end{equation}
\end{theorem}
\begin{proof}  Since the computations are done in the pull-back tangent bundle of $\HH$, the induced norm is positive definite here.  By a straightforward extension of the arguments already used, $J_q $ is convex functional. The covariant derivative  $(d |\xi|;d|\xi|)  \leq  (D_u\xi; D_u\xi)^\sharp)$ and $D_u $ has no kernel.  The computation of the Euler-Lagrange equations is also straightforward.  By (\ref{form:conjugate}), 
\[ 
     Q(Z)^{q-2}Z = Q(du)^{(q-2)(p-1)}Q(du)^{p-2}*du = *du.
 \]
Then, $D_udu = 0 $, proving (\ref{eulags0}).
\end{proof}

There is an analogous formulation of Theorem~\ref{TTheorem A6} in terms of  $V=*S_{p-1}(du) \times u$.

\begin{theorem}\label{TTheorem A66}   Let $u=u_p$ satisfy the $J_p$-Euler-Lagrange equations and let $V=*S_{p-1}(du)\times u$.  For $\psi \in \Omega^0(ad(E))$, let 
\[
J_q(\psi) =  \int_M Tr Q(V + (d\psi)_u)^q *1=||V + (d\psi)_u)||^q_{sv^q}.
\]
Then the minimum of $J_q$ over all $\psi =\psi_u$  is taken on at $\psi= 0$. Furthermore, 
\begin{equation}\label{eulags}
D_u*Q(V)^{q - 2}V  = 0.
\end{equation}
\end{theorem}
\begin{proof}
The proof is the same, except that $*(Q(V)^{q-2}V) = du \times u.$  This is the pull-back to $M$ of the Maurer Cartan form on $N.$ We have
\[
         d( du \times u) = du \times du
\]
which is a two form normal to the surface $\HH $ lifted to $ad(E)$ (i.e in the kernel of $\alpha_u$ (cf. Proposition~\ref{helpemb}$(iv)$)) to the surface $\HH $ lifted to $ad(E)$. Hence, $(du \times du)_u = 0.$
\end{proof}

  \section{Conservation laws from the symmetries of the domain}\label{consnoetdom}
In this Section we describe the conservation laws coming from the infinitesimal symmetries of the domain. 
 We first define the energy-momentum tensor $T=T(du)$ associated with the minimizer $u=u_p$ of the functional $J_p$. The results of this section are proved in greater generality  covering a wider class of variational problems for maps between Riemannian manifolds.  The main theorem  is that the energy-momentum tensor  is divergence free. Subsequently, we specialize to the  case of maps between hyperbolic surfaces and the functional $J_p$ that is of interest in this paper. We push $T$ forward to the Lie algebra bundle to obtain, as predicted by Noether's theorem, a {\it closed} $(n-1)=1$-form $W$ with values in the Lie algebra. 
 For an introduction to Noether's theorems see \cite{uhlenNoether}.

\subsection{The energy-momentum tensor}\label{enmomtens}
 In this section $M$ is a compact  $n$-dimensional Riemannian manifold and 
$N \subset \R^k$ a compact {\it isometrically embedded} submanifold. To be consistent with the notation before, we use $(\cdot , \cdot)^\sharp$ for the target metric and $(\cdot ; \cdot)$ for the domain metric.  For every $p \geq 2$,  we define a class of geometric $p$-Lagrangians
\[ 
 F: \R^k \times (\R^k \otimes \R^k) \rightarrow \R^+.
 \]
These should have the properties:
\begin{itemize}
   \item   $(i)$ $F$ is twice differentiable and convex in $X \in \R^k \otimes \R^k$ 
    \item  $(ii)$ For $C,c >0$, $c|X|^{p/2} \leq F(y,X) \leq C(1 + |X|)^{p/2} $
     \item  $(iii)$  $|G_{(i,j)}(y, X)| := |\frac{\partial F(y,X)}{\partial X^{ij}}| \leq 
    |X|^{p/2-1}$ is positive definite.
\end{itemize}

For $f:M \rightarrow N \subset \R^k$, recall the endomorphism $Q^2(df)$ of $End(f^{-1}(TN)).$  By the embedding of $N$ in $\R^k$, this extends to a symmetric $k \times k$-matrix
\begin{equation}\label{defQ}
Q^2_{ij}(df)  = (df^i ; df^j)=g^{\alpha \beta}(d_\alpha f^i ; d_\beta f^j).
\end{equation}
Here $g$ is the Riemannian metric on $M$ and $f^i, i=1,...k$ are the components of $f$.
In this section $p \geq 2$.
  Note that if  $f \in W^{1,p}(M,\R^k)$, $Q^2(df) \in L^{p/2}(M,\R^k)$.  
We are interested in the variational problem for the integral
\[
         I_p(f) = \int_M F(f, Q^2(df))*1.
\]

\begin{theorem}\label{genNTheorem 1}Fix $p \geq 2$ and a continuous map $f_0:M \rightarrow N$. There exists $u \in W^{1,p}(M,N)$ such that $I_p(u)$ minimizes 
$I_p(f)$ over all $f \in W^{1,p}(M,N)$ and $u_*=f_{0*}: \pi_1(M) \rightarrow \pi_1(N)$.
\end{theorem}
\begin{proof}  This follows from weak convergence and lower semicontinuity of $I_p$. For a more general result see \cite{white}.
\end{proof}

We are interested in constructing the  momentum tensor for the minimizer $u$. This is a symmetric 2-tensor $T$ in $T^*M \otimes T^*M$ given by
\[
     T(df) =1/p( 2 \sum_{i,j} G_{(i,j)} df^i \otimes df^j - g F).
\] 
 Note that this is well defined as a symmetric two tensor in $L^1$ if $f \in L^p$.
 \begin{remark} The isometric embedding of $N$ into $\R^k$ is used in the above formula. In fact, for an arbitrary metric $h_{ij}$ on $N$ the definition of $T$ is
  \[
     T(df) =1/p( 2 \sum_{i,j,k} h_{jk}(f)G_{(i,j)} df^i \otimes df^k - g F).
 \]
 \end{remark}
 
 \begin{proposition} \label{rep=s0}Assume that the function $F$ is homogenous of degree $p/2$, i.e $F(y, tX)=t^{p/2} F(y, X)$, $t \in \R$. Then 
 $$
Tr_gT(df) =(p-n)/p F(f, Q^2(df)).
$$
 \end{proposition}
 \begin{proof}
 \begin{eqnarray*}
Tr_gT(df) &=&  1/p g^{\alpha \beta} ( 2\sum_{i.j} G_{(i,j)}(f, Q^2(df))d_\alpha f^i d_\beta f^j   - F(f, Q^2(df)) g_{\alpha \beta})\\
&=&  1/p(2\sum_{i.j}  G_{(i,j)}(f, Q^2(df)) (df^i ; d f^j)    - F(f, Q^2(df)) g^{\alpha \beta}g_{\alpha \beta})\\
&=&  1/p(2\sum_{i.j}  G_{(i,j)}(f, Q^2(df)) Q^2_{ij}(df)   - nF(f, Q^2(df))) \\
&=&  (p-n)/p F(f, Q^2(df)).  
\end{eqnarray*}  
 In the last equality we used the homogeneity property of $F$, i.e 
 \  $p/2 \sum_{i,j}  G_{(i,j)}(y, X) X^{ij} \\=F(y, X).$
 \end{proof}
 
 We are primarily interested in the case $F(y, X)=Tr X^{p/2}$, $I_p=J_p$ and the minimum $u$ is the $p$-Schatten harmonic map. Then
  \begin{equation}\label{Slocalcoord1}
T(df) =  (Q(df)^{p-2}df\otimes df)^\sharp - 1/pTr Q(df)^p g 
\end{equation} 
and
\begin{equation}\label{rep=s0}
Tr_gT(df)= (p-n)/p \ TrQ(df)^p.
\end{equation}

Another example is   $F(y, X)=(Tr X)^{p/2}$. In this case, 
$I_p(f)=\int_M |df|^p*1$ 
and the minimum $u$ is the more familiar  $p$-harmonic map. Moreover,
 \begin{equation*}
T(du) =  (|du|^{p-2}du\otimes du)^\sharp - 1/p|du|^p g, \ Tr_gT(du)= (p-n)/p |du|^p. 
\end{equation*}

We now prove the main result of the section that the minimizer $u$ of $I_p$ satisfies $D^*(T(u))=0$. This is to be interpreted in a weak sense. According to the hypotheses, $T(du)$ is only in $L^1$. For all smooth 1-forms $\Phi$ on $M$
\[ 
    \int_M (T(du) ;D\Phi)*1 = 0.
    \]
Note that $D: C^\infty(T^*M)  \rightarrow C^\infty(T^*M \otimes T^*M)$ represents the exterior derivative  on $M$ coupled with the Levi-Civita connection on $T^*M$.
\vspace{.3cm}

 \begin{theorem}\label{genNTheorem 2} If $u=u_p$ is a minimum in $W^{1,p}(M,N)$ of $I_p$, then $D^*{T}(du) = 0$, i.e the symmetric (0,2)-tensor $T=T(du)$ is divergence free with respect to the covariant derivative $D$.
\end{theorem}
\begin{proof}
Let $\phi(t, \cdot)=\phi_t:M \rightarrow M$ be a smooth family of diffeomorphisms such that $\phi_0=id$ and $\frac{d\phi}{dt} \big|_{t=0}  = V$, where $V=\Phi^\sharp$ is the vector field dual to the 1-form $\Phi$.   Then, (omitting the $t$ for now)
\[
I_p(\phi^*u) = 
         \int_M F( u(\phi(x)), Q^2(du  |_{\phi(x)})d\phi |_x)*1.
\]
By a change of variable $y = \phi(x)$ we get
\[
I_p(\phi^* u) = 
\int_M F(u(y),Q^2(du |_yd\phi |_{\phi^{-1}(y)})det(d\phi^{-1} |_y))*1.
\]
Note that the above expression is differentiable in $t$.  Since $u =\phi_0^* u$ in a minimum, the derivative at $t = 0$ must be 0.  Using (\ref{defQ}),  we calculate the derivative at $t = 0$ 
\begin{eqnarray*}
0 &=&
2\int_M [ G_{(i,j)}(u(y),Q^2(du |_y)) (du^i |_y ; D_{d/dt} \big |_{t=0} du^j |_yd\phi |_{\phi^{-1}(y)})\\
     &+&  \frac{d}{dt} \Big |_{t=0} det(d\phi^{-1} |_y)F(u(y),Q^2(du |_y))] *1\\
     &=& 2\int_M [ G_{(i,j)}( u(y),Q^2(du |_y)) (du^i |_y ;  du^j |_y D( \frac{\partial}{\partial t} \Big |_{t=0} \phi(\phi^{-1}(y))))\\
     &+&  \frac{d}{dt} \Big |_{t=0} det(d\phi^{-1} |_y)F(u(y),Q^2(du |_y))] *1\\
 &=& \int_M [ G_{(i,j)}( u(y),Q^2(du |_y))(du^i |_y ; du^j |_y DV |_y)
     - Tr(DV |_y)F(u(y),Q^2(du |_y)] *1\\
     &=& \int_M [ G_{(i,j)}( u(y),Q^2(du |_y))(du^i |_y\otimes du^j |_y ; D\Phi |_y)
     - Tr_g(D\Phi |_y)F(u(y),Q^2(du |_y)] *1.
\end{eqnarray*}
In the second equality we used the fact that $u$ does not depend on $t$ and the identity 
$\frac{D}{dt}d\phi=D\frac{\partial}{\partial t}$.
\end{proof}

%
%
\begin{corollary}\label{conservformcl} If $M$ has a local symmetry given by the Killing field $\xi: M \rightarrow TM$, and $u$ is a minimum of $I_p$, then the $(n-1)$-form $*T(du)(\xi)$ is closed, i.e
\[
 d*T(du)(\xi) = 0.
\]
In particular, there exists locally a $(n-2)$-form $\theta_\xi$ such that  $*T(du)(\xi)=d\theta_\xi $.
\end{corollary}

\begin{corollary}\label{genNCorollary 4}  If $n = p$, $F$ homogeneous of degree $p/2$ and $u$ is a minimizer for $I_p$, then $T(du)$ is traceless and divergence free. 
\end{corollary}
\begin{proof}  Follows from Theorem~\ref{genNTheorem 2} and (\ref{rep=s0}). 
\end{proof}

\begin{remark} Since a traceless and divergence free symmetric (0,2)-tensor on a Riemann surface is a holomorphic quadratic differential,  Corollary~\ref{genNCorollary 4} should be seen as the higher dimensional generalization of the Hopf differential. 
\end{remark}

\subsection{The case of hyperbolic surfaces}\label{Noethypsurf}
We now specialize to the case where $(M,g)$, $(N,h)$ are hyperbolic surfaces and $u=u_p$ is the minimum of $J_p$ in a given homotopy class. Let $\sigma$ denote the $SO(2,1)$-representation corresponding to the hyperbolic structure on $M$. In analogy with $E$, $ad(E)$   let $F=\tilde M \times_\sigma \R^{2,1}$ and $ad(F)=\tilde M \times_\sigma \mathfrak g$ for the $\R^{2,1}$-bundle and the Lie algebra bundle respectively. 
The maps $\alpha$ and $\beta$ of Proposition~\ref{helpemb0} globalize
\[
\alpha: ad(F) \rightarrow TM \subset F, \ \  \phi \mapsto \phi u
\]
and 
\[
\beta: F \rightarrow ad(F), \ \  v \mapsto v \times id.
\]
Here $id: M \rightarrow M$ is the identity map.

As in Section~\ref{sec3},  $u = u_p$ is a solution of the $J_p$-Euler-Lagrange equations, $S_{p-1}(du) = Q(du)^{p-2}du$ and $V=*\beta_u(S_{p-1}(du)) =(*S_{p-1}(du) \times u$. Set
\[
W=*\beta(T)=*T \times id
\]
where $T=T(du) =  (Q(du)^{p-2}du \otimes du)^\sharp - 1/pTr Q(du)^p g$ is the energy-momentum tensor.  Then $W \in \Omega^1(ad(F))$ of the appropriate Sobolev class. By identifying $T M$ with $T^*M$ via the metric (musical isomorphisms) and $ad(F)$ with its dual, the star operator commutes with $\beta$. Therefore, the order which we take in the definition of $W$ is not important.

Let  $\phi$ be an infinitesimal isometry  of $M$ considered as  a section of $ad(F)$ such that $d\phi=0$.  If  $\xi =1/2 \alpha (\phi)$ denotes the corresponding (local) Killing field,   
  \begin{eqnarray*}
 (W, \phi)^\sharp=(\beta(*T), \phi)^\sharp=2(*T, \alpha(\phi))^\sharp=*T(\xi).
  \end{eqnarray*}
  The closed form $ \omega_\phi^N=(W, \phi)^\sharp=*T(\xi)$ is { \it the Noether current associated to the infinitesimal symmetry $\phi$}.
  
  Lifting $W$ to the universal cover  write, as in (\ref{formuV}), 
  \[
\tilde W=\sum_{i=1}^3*\omega_{\phi_i}^N\phi_i.
  \]
 Thus, the closedness of $W$ is equivalent to the closedness of the Noether currents $\omega_{\phi_i}^N$. In particular, Corollary~\ref{conservformcl} implies
 the following analogue of Theorem~\ref{Prop:Elag-bund} for the symmetries of the domain:
 \begin{proposition} With respect to the flat connection $d$ on $ad(F)$, $W$ is closed in the distribution sense, i.e $dW=0$.
 \end {proposition}
 
 We now record two propositions that will be needed in the future. The first is an expression of the trace of $T$ in terms of $W$. The second gives the relation between $V$ and $W$.
  
 The derivative $d(id)$ of the identity map has values in  $T^*M \otimes TM$ and thus $d(id)\times (id)$ is in $\Omega^1(M, Ad(F))$. We write $d(id)\times (id)(x)=dx \times x=\omega_{mc}(x)$ and note that it is equal to the Mauer-Cartan form. With this notation:
 
 \begin{proposition}\label{positiv1} 
 \[
 *(\omega_{mc}\wedge W)^\sharp=2Tr_g T=\frac{2(p-2)}{p}TrQ(du)^p. 
 \]
 \end{proposition}
 \begin{proof}
Choose local orthonormal coordinates and write $T=T_{\alpha \beta}dx^\alpha \otimes dx^\beta$. Then, 
 \begin{eqnarray*}
 \lefteqn{(\omega_{mc}\wedge W)^\sharp= 2(d(id)\wedge *T)^\sharp}\\
 &=& 2(dx^\gamma \otimes \frac{\partial}{\partial x^\gamma}\wedge   T_{\alpha \beta} *dx^\alpha \otimes \frac{\partial}{\partial x^\beta})^\sharp\\
 &=&2T_{\alpha \beta} dx^\beta \wedge *dx^\alpha\\
 &=&2Tr_g T *1=\frac{2(p-2)}{p}TrQ(df)^p *1.
\end{eqnarray*}
 The last equality follows from (\ref{rep=s0}).
 \end{proof}

\begin{proposition}\label{prop:relvw} The tensors $W$ and $V$ are related by
\[
-2T=(*V \otimes du \times u)^\sharp; \ \ *W=-2T \times id.
\]
\end{proposition}

 \section{Regularity theory}\label{sect:regtheor}
The study of  $J_p$-harmonic maps is as natural as studying $p$-harmonic maps.   However, we found no references in the literature about Schatten integrals such as $J_p$.  One of the problems is that the usual methods of proving regularity, even the simplest theorem of showing $du_p$ is bounded, do not seem to be applicable.  In Theorem~\ref{mainregtH} we prove the apriori estimates for showing that  for $u_p$ satisfying the  $J_p$-Euler-Lagrange equations,   $D(Q(du_p)^{p/2-1}du_p) \in L^2$. {\it Throughout this section we make the assumption that $M$ and $N$ are hyperbolic surfaces.}

\subsection{Estimates on the indefinite metric}We first derive some basic estimates by considering maps $f: M \rightarrow N$ as sections of the $\HH$-bundle $H$  embedded in the flat $R^{2,1}$ bundle $E$ determined by the homotopy class of the map.  Because the metric on $R^{2,1}$ is indefinite, the geometry is slightly different than what we are accustomed to. These simple results will be used  throughout the subsequent sections.
 
\begin{lemma}\label{distpos} Let $X,Y \in \HH$ and $t = dist (X,Y).$  Then 
\[
t^2 \leq (X - Y,X - Y)^\sharp = 2(\cosh t -1) \leq t^2 \cosh t .
\]
\end{lemma}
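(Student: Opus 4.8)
The statement is purely about the geometry of the hyperboloid $\HH \subset \R^{n,1}$, so the plan is to reduce it to a one-dimensional computation along a geodesic. First I would use the transitivity of the $SO^+(n,1)$-action (and the fact that it preserves $(,)^\sharp$ and distances, as recorded in Section~\ref{geomhyp}) to move $X$ to the base point $X_0 = (0,\dots,0,1)$. Since $Y$ lies at distance $t$ from $X_0$, after a further rotation fixing $X_0$ I may assume $Y$ lies in a fixed two-dimensional Lorentzian plane, so that $Y = (\sinh t, 0, \dots, 0, \cosh t)$; this is just the standard arclength parametrization of the geodesic through $X_0$ in $\HH$, whose correctness follows from Lemma~\ref{metricHemb} identifying $(,)^\sharp|_{T\HH}$ with the hyperbolic metric.

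With these normalizations the middle quantity is an explicit computation:
\[
(X_0 - Y, X_0 - Y)^\sharp = (-\sinh t)^2 + (1 - \cosh t)^2\cdot(-1)\cdot(-1)\dots
\]
— more carefully, writing $X_0 - Y = (-\sinh t, 0, \dots, 0, 1 - \cosh t)$ and using $(Z,Z)^\sharp = Z_1^2 + \dots + Z_n^2 - Z_{n+1}^2$, one gets $\sinh^2 t - (1-\cosh t)^2 = \sinh^2 t - 1 + 2\cosh t - \cosh^2 t = 2\cosh t - 2$, which is the claimed identity $2(\cosh t - 1)$.

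It then remains to prove the elementary scalar inequalities $t^2 \leq 2(\cosh t - 1) \leq t^2 \cosh t$ for $t \geq 0$. For the left inequality, compare Taylor series: $2(\cosh t - 1) = \sum_{k\geq 1} \frac{2 t^{2k}}{(2k)!} \geq t^2$ since the $k=1$ term is exactly $t^2$ and all further terms are nonnegative. For the right inequality I would set $g(t) = t^2 \cosh t - 2\cosh t + 2$, note $g(0) = 0$, and show $g'(t) \geq 0$ for $t \geq 0$; one computes $g'(t) = 2t\cosh t + t^2 \sinh t - 2\sinh t = 2t\cosh t + (t^2 - 2)\sinh t$, and then check $g'(0)=0$ and $g''(t) = 2\cosh t + 2t\sinh t + (t^2-2)\cosh t + 2t\sinh t = t^2\cosh t + 4t \sinh t \geq 0$, so $g'$ is increasing from $0$ hence nonnegative, hence $g$ is nondecreasing from $0$. (Alternatively, $2(\cosh t - 1)/t^2$ is an increasing function of $t^2$ with limit $1$ at $0$, while $2(\cosh t - 1) = \int_0^t \int_0^s \cosh r \, dr \, ds \leq \cosh t \cdot t^2/... $ — the cleanest version is $2(\cosh t - 1) = \int_0^t 2\sinh s\, ds$ and $2\sinh s \leq 2s\cosh s \leq \frac{d}{ds}(s^2 \cosh s)$ pointwise.)

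\textbf{Main obstacle.} There is no real obstacle here; this is a routine normalization-plus-calculus lemma. The only point requiring a little care is justifying that one may simultaneously normalize $X = X_0$ and place $Y$ on the standard geodesic — this uses both transitivity and the isotropy action at $X_0$ — and keeping the signs straight in the Lorentzian inner product computation of $(X_0 - Y, X_0 - Y)^\sharp$. The scalar inequalities are best handled by the integral representation $2(\cosh t - 1) = \int_0^t 2\sinh s\,ds$ together with the pointwise bounds $2s \le 2\sinh s$... wait, that inequality goes the wrong way for the lower bound; instead use $\sinh s \ge s$ directly to get $2(\cosh t-1) = \int_0^t 2\sinh s\, ds \ge \int_0^t 2s\, ds = t^2$, and use $2\sinh s \le \frac{d}{ds}(s^2\cosh s) = 2s\cosh s + s^2\sinh s$ (valid since $s^2 \sinh s \ge 0$ and $2s\cosh s \ge 2s \ge 2\sinh s$ fails — so instead verify $2s\cosh s + s^2 \sinh s - 2\sinh s \ge 0$ directly as above) to get the upper bound by integrating from $0$.
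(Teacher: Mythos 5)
Your proposal is correct and follows essentially the same route as the paper: normalize $X$ and $Y$ onto a standard geodesic via the isometric $SO^+(n,1)$-action, compute $(X-Y,X-Y)^\sharp = \sinh^2 t - (\cosh t - 1)^2 = 2(\cosh t -1)$, and finish with elementary calculus for the two scalar bounds (the paper leaves the upper bound as "via calculus," which your second-derivative argument for $g(t)=t^2\cosh t - 2\cosh t + 2$ supplies correctly).
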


\begin{proof}  By equivariance, we may assume $X_i = 0, i = 1,2$,  $X_3 = 1$ and $Y_1 = 0, Y_2 = \sinh t, Y_3 = \cosh t. $ Then distance in $\HH$ from $X$ to $Y$ equals
\[
\int_0^t \left (X'_2(\tau)^2 - X'_3(\tau)^2 \right)^{1/2}d\tau = t. 
\]
The geodesic is 
\[
     X_1(\tau) = 0, \ X_2(\tau) = \sinh(\tau) \  X_3(\tau) = \cosh(\tau).
\]
Since 
\[
(X-Y,X-Y)^\sharp = (\sinh t)^2 - (\cosh t - 1)^2 = 2(\cosh t - 1)\geq
 t^2,
 \]
 the estimate $2(\cosh t - 1) \leq t^2 \cosh t $ can be obtained via calculus.
\end{proof}

Denote the orthogonal projection of $W \in R^{2,1}$ into the tangent space of $\HH$ at $X$ 
by 
\begin{equation}\label{proj1}
W_X =\Pi(X)W= W + (W,X)^\sharp X.
\end{equation}
\begin{lemma}\label{supptLemma2} Let $X, Y \in \HH$, $W_Y = W$.  Then
\[
  (W,W)^\sharp \leq (W_X,W_X)^\sharp \leq \left (1 + 1/2(X-Y,X-Y)^\sharp \right)^2(W,W)^\sharp.
 \]
\end{lemma}
\begin{proof}
\[
       (W_X,W_X)^\sharp = (W + (W,X)^\sharp X, W+(W,X)^\sharp X)^\sharp =
        (W,W)^\sharp + {(W,X)^\sharp}^2.
\]
This implies the left hand side inequality. Moreover, 
\begin{eqnarray}\label{strineq1}
   {(W,X)^\sharp}^2 &=& {(W, X - Y)^\sharp}^2 
    = {(W,(X -Y)_Y)^\sharp}^2 \nonumber \\
     &=& {(W, X - Y + (X - Y,Y)^\sharp Y)^\sharp}^2 \nonumber\\
 &\leq& (W,W)^\sharp \left((X - Y, X - Y)^\sharp + {(Y-X,Y)^\sharp}^2\right) \\
 &=& (W,W)^\sharp (X - Y, X - Y)^\sharp (1 + 1/4(X - Y, X  - Y)^\sharp).\nonumber
  \end{eqnarray}
The inequality  follows from the fact that both terms are in the tangent space to $Y$ in which the inner product is positive definite, and the last equality from
\begin{eqnarray*} 
    (Y - X,Y)^\sharp &=& 1/2(2(-X,Y)^\sharp -2)
     = 1/2(-2(X,Y)^\sharp +(Y,Y)^\sharp +(X,X)^\sharp) \\
     &=&1/2(X - Y, X - Y)^\sharp.
 \end{eqnarray*}
 The right hand side  inequality follows from this.
 \end{proof}
 
We assume $W: T_x \tilde M=T_x \HH \rightarrow T_X \HH \subset R^{2,1} $. Typically $W$ will be the differential of the lift of  a map between two hyperbolic surfaces. Recall from (\ref{newfQ}) that we defined $Q(W)$ by
    $Q(W)^2 = (W ; W^\sharp) $. 
 Let $s(W) = s_1(W)$ be the largest eigenvalue of $Q(W) $ (or singular value of $W$) as in Definition~\ref{schatten}. Then,  
\begin{eqnarray}\label{shatl1}
       s (W)^l \leq Tr Q(W)^l \leq 2s (W)^l
 \end{eqnarray}
which will make it useful in estimates.
At the same time, we find the weight 
\begin{eqnarray}
      \omega(X,Y) = 1 + 1/2(X-Y, X-Y)^\sharp \geq 1 
 \end{eqnarray}\label{deromea}
appears often in our estimates ($ \omega \geq  1$ by  Lemma~\ref{distpos}).


\begin{corollary} \label{compprojnorm} If $W=W_Y: T_y \HH \rightarrow T_Y \HH \subset R^{2,1} $, then the pointwise Schatten norms satisfy
\[
   |W_X|_{sv^p}  \leq \omega(X,Y)|W|_{sv^p} \ \ 1 \leq p \leq \infty.
      \]
      In particular,
\[
       s(W_X)  \leq \omega(X,Y)s(W).
      \]
\end{corollary}
\begin{proof} Lemma~\ref{supptLemma2} implies that $|W_X| \leq \omega(X,Y)|W|$ pointwise. The rest follows immediately from   Lemma~\ref{ineqsvp}.
\end{proof}



\begin{lemma}\label{formdRw}  Assume $w, f :M \rightarrow N$, $W =dw$. Let  $(w - f)_w = w - f + R$, where  $R = (w - f,w)^\sharp w. $  Then 
 \begin{eqnarray*}       
  (dR)_w = 1/2(w-f,w-f)^\sharp dw.
  \end{eqnarray*}
\end{lemma}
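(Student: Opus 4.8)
The plan is to compute $(dR)_w$ directly, where $R = (w-f,w)^\sharp w$, and then project onto the tangent space of $\HH$ at $w$. First I would differentiate $R$ using the product rule in the flat bundle $E$, obtaining three terms:
\[
dR = d(w-f,w)^\sharp \cdot w + (w-f,w)^\sharp \, dw,
\]
where the first piece further splits as $(d(w-f),w)^\sharp w + (w-f,dw)^\sharp w$. The key simplifications come from the constraints that $w$ is a section of $H$, so $(w,w)^\sharp = -1$ and hence $(dw,w)^\sharp = 0$, and from the relation analogous to the one in Lemma~\ref{supptLemma2}, namely $(w-f,w)^\sharp = \tfrac12(w-f,w-f)^\sharp$ (using $(w,w)^\sharp = (f,f)^\sharp = -1$).

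The second step is the projection. Applying $\Pi(w)$, i.e. the map $\eta \mapsto \eta + (\eta,w)^\sharp w$, kills any term proportional to $w$ modulo a correction, so I would track carefully which surviving terms are tangent. The term $(w-f,w)^\sharp dw$ is already tangent since $dw$ is. The terms of the form $(\cdots)^\sharp w$ get projected: $\Pi(w)\big((\cdots)^\sharp w\big) = (\cdots)^\sharp(w + (w,w)^\sharp w) = 0$ because $(w,w)^\sharp = -1$. Thus all the awkward pieces arising from differentiating the scalar coefficient drop out under the projection, and I expect to be left with exactly
\[
(dR)_w = (w-f,w)^\sharp \, dw = \tfrac12 (w-f,w-f)^\sharp \, dw,
\]
which is the claim. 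One should double-check that $W_w = W$ with $W = dw$ means no additional projection is needed on the $dw$ factor, consistent with the hypothesis.

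The main obstacle, such as it is, is bookkeeping: making sure the decomposition $(w-f)_w = w-f+R$ is used consistently and that the identity $(w-f,w)^\sharp = \tfrac12(w-f,w-f)^\sharp$ (the same computation as $(Y-X,Y)^\sharp = \tfrac12(X-Y,X-Y)^\sharp$ in the proof of Lemma~\ref{supptLemma2}) is invoked at the right moment, both inside $d(\cdot)^\sharp$ and in the final coefficient. There is no analytic difficulty here; the content is purely the algebra of the indefinite form together with the two pointwise constraints $(w,w)^\sharp=-1$ and $(dw,w)^\sharp=0$. I would present it as a short direct calculation, mirroring the style of the proof of Proposition~\ref{prop:2fundform}.
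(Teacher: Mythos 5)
Your proposal is correct and follows essentially the same route as the paper: product rule on $R=(w-f,w)^\sharp w$, observe that the piece proportional to $w$ is annihilated by the projection $\Pi(w)$ since $(w,w)^\sharp=-1$, and finish with the identity $(w-f,w)^\sharp=\tfrac12(w-f,w-f)^\sharp$. Nothing is missing.
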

\begin{proof}  
Using the fact that $w$ is orthogonal  to the tangent space,
\begin{eqnarray*}
(dR)_w&=&\left (d(w - f,w)^\sharp w+(w - f,w)^\sharp dw \right)_w 
=(w - f,w)^\sharp dw.
\end{eqnarray*}
Since $(w,w)^\sharp = (f,f)^\sharp = -1$,  
 \[       
  (w-f,w)^\sharp = 1/2(w-f,w-f))^\sharp.
\] 
The lemma follows immediately from this. 
\end{proof}


%
\begin{proposition}\label{supptprop5}  Assume $w = u_p$ satisfies the $J_p$-Euler-Lagrange equations, $W=du_p$,  and $f :M \rightarrow N$ is a comparison map.  Then
\[
   < \omega(w,f) S_{p-1}(W), W - F> = 0.
\]
Here $S_{p-1}(W)=Q(W)^{p-2}W$, and $F = \omega(w,f)^{-1}(df)_{w} $ satisfies $s(F) \leq s(df).$
\end{proposition}
\begin{proof}  Because $w$ satisfies the Euler-Lagrange equations for $ J_p$, 
\begin{eqnarray*}
    0 &= &<S_{p-1}(W), D_{w}(w-f)_{w}> = <S_{p-1}(W), d(w-f)_{w}> \\
    &= &<S_{p-1}(W), W - df + dR> = <S_{p-1}(W), \left(1 + 1/2 (w-f, w-f)^\sharp \right)W - df> \\
    &= &<\omega(w,f)S_{p-1}(W), W - F>.
     \end{eqnarray*}
  Here we have used that    $S_{p-1}(W)=Q(W)^{p-2}W$ is tangent, and  Lemma~\ref{formdRw}.
  The fact that $s(F) \leq s(df)$ follows from  Corollary~\ref{compprojnorm}. \end{proof}

The maps we are dealing with are not necessarily smooth. We can approximate $W^{1,p}$ maps by $C^\infty$ maps, prove the inequalities for them (of course, remembering that the approximations do not satisfy the Euler-Lagrange equations, but do up to a term which goes to zero as the approximation goes to its limit).  This is a standard technique in basic differential topology of maps based on Banach norms, and we do not go into the details.

The next Lemma follows immediately from Lemma~\ref{lemmaconvvnorms}. We simply use the pointwise inequality  on differentiable sections $W$ and $F$ of the tangent bundle with a smooth weight.  The weights simply multiply the inequality point wise. Since the differentiable sections  and smooth weights are dense, it follows for all $W^{1,p}$  sections $W$ and $F$ and bounded measurable non-negative weights $\omega.$

\begin{lemma}\label{sptlemma7}  $w: M \rightarrow N$ a $W^{1,p}$ map, $W=dw$.  If  $0 \leq \omega=\omega(x) \leq k$ is a weight and  $F_w=F$, then
\[
p<\omega Q(W)^{p-2}W, F - W> \leq <\omega Tr(Q(F)^p-Q(W)^p)>.
\]
\end{lemma}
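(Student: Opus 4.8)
The statement to be proved, Lemma~\ref{sptlemma7}, asks for the weighted pointwise inequality
\[
p\,\omega\,(Q(W)^{p-2}W, F-W)^\sharp \leq \omega\,\bigl(TrQ(F)^p - TrQ(W)^p\bigr)
\]
integrated over $M$, for a $W^{1,p}$ map $w:M\to N$ with $W=dw$, a weight $0\le\omega\le k$, and a section $F$ of the tangent bundle with $F_w=F$. The key observation, flagged already in the sentence preceding the lemma, is that this is nothing but Proposition~\ref{lemmaconvvnorms} applied \emph{pointwise} and then multiplied by the non-negative weight $\omega$. So the plan is: first establish the pointwise inequality on smooth sections, then multiply by $\omega$, integrate, and finally pass to the $W^{1,p}$ limit by density.

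First I would recall Proposition~\ref{lemmaconvvnorms}: for $A,B\in Hom(V_1,V_2)$ one has $p(Q(A)^{p-2}A, B-A)\le |B|_{sv^p}^p - |A|_{sv^p}^p$, which follows from convexity of $j(A)=TrQ(A)^p$ together with the identity $(dj)_A(C)=p(Q(A)^{p-2}A,C)$. Apply this at a point $x\in M$ with $A=W(x)$ and $B=F(x)$, regarded as linear maps $T_xM\to T_{w(x)}\HH\subset\R^{2,1}$ with the positive-definite inner product on the tangent space of $\HH$ (this is where $F_w=F$ and $W_w=W$ matter: both take values in the tangent space at $w(x)$, on which $(\,,\,)^\sharp$ is genuinely positive definite by Lemma~\ref{metricHemb}, so the linear-algebra proposition applies verbatim). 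This gives
\[
p\,(Q(W)^{p-2}W, F-W)^\sharp \le TrQ(F)^p - TrQ(W)^p
\]
pointwise. Multiplying both sides by $\omega(x)\ge 0$ preserves the inequality at each point, and integrating over $M$ yields the claimed inequality for smooth $W,F$ and smooth (hence bounded measurable) weights.

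The remaining step is the density/approximation argument, which is the only place any real care is needed (though still routine). Given a $W^{1,p}$ map $w$, approximate it by smooth maps $w_j\to w$ in $W^{1,p}$; set $W_j=dw_j$ and take smooth $F_j$ with $(F_j)_{w_j}=F_j$ approximating $F$, and smooth weights $0\le\omega_j\le k$ approximating $\omega$. For each $j$ the inequality holds; then one lets $j\to\infty$. The continuity of both sides under $W^{1,p}$ convergence follows because $A\mapsto Q(A)^{p-2}A$ and $A\mapsto TrQ(A)^p$ are continuous with polynomial (degree $p-1$ and $p$ respectively) growth, so $L^p$-convergence of $W_j,F_j$ and $L^\infty$-boundedness of the weights give $L^1$-convergence of the integrands by a standard Vitali/generalized dominated convergence argument; the uniform bound $\omega_j\le k$ is exactly what controls the weight factor in the limit. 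The main (modest) obstacle is simply bookkeeping the growth estimates to justify passing the limit inside the integral — there is no conceptual difficulty, since convexity does all the real work at the pointwise level and the paper has already set up (in the paragraph before the lemma) that this density argument is to be taken as standard.
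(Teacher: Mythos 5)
Your proposal is correct and follows essentially the same route as the paper: apply the pointwise convexity inequality of Proposition~\ref{lemmaconvvnorms} to $A=W(x)$, $B=F(x)$ (valid because both land in the tangent space at $w(x)$, where the metric is positive definite), multiply by the non-negative weight, integrate, and conclude for $W^{1,p}$ sections and bounded measurable weights by density. The paper treats the density step as routine exactly as you do, so there is nothing to add.
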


\subsection{Pointwise inequalities} In this section we will prove some pointwise inequalities that we will need in the sequel. We found no reference for similar inequalities in the literature. Let $V_1$ and $V_2$ be inner product spaces of dimension 2 and $A,B \in Hom(V_1, V_2)$.   In the applications of the first inequalities $A = du$ and $B = (dw)_u$ both map  $V_1=T_xM$  to $V_2=T_{u(x)} N$.  As before  $Q(A)^2 = A  A^T$ is a symmetric map of $V_2$ and $\mathcal Q(A)^2 = A^T  A$  a symmetric map of $V_1$. Let $S_q(A)=Q(A)^{q-1}A$  a map from $V_1$ to $V_2$. We also introduce the notation $\delta(X,Y)=(X-Y, X-Y)^\sharp.$

 We start with the following non-standard inequality:

\begin{lemma}\label{lemmappp}  For $x, y > 0$, $p>2$
\[
       (x^{p/2} \pm y^{p/2})^2 < p(x^{p-1} \pm y^{p-1})(x\pm y). 
       \]
\end{lemma}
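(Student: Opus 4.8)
The plan is to treat the two sign cases separately and reduce each to a one-variable inequality. First I would dispose of the trivial homogeneity: both sides are homogeneous of degree $p$ in $(x,y)$, so after dividing by (say) $y^p$ we may set $y=1$ and must prove, for $t = x>0$ and $p>2$,
\[
(t^{p/2}\pm 1)^2 < p(t^{p-1}\pm 1)(t\pm 1).
\]
By the $x\leftrightarrow y$ symmetry of both sides (in the $+$ case) and the antisymmetry under swap combined with $t\mapsto 1/t$ (in the $-$ case), it suffices to handle $t\ge 1$, and at $t=1$ both sides agree in the $-$ case (they vanish) while in the $+$ case the inequality reads $4 < 4p$, which holds. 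So in each case I would set $g(t) = p(t^{p-1}\pm 1)(t\pm 1) - (t^{p/2}\pm 1)^2$, check the boundary value $g(1)\ge 0$ (with strictness coming out at the end), and show $g'(t) > 0$ for $t>1$.

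The computation of $g'(t)$ gives, for the $+$ case,
\[
g'(t) = p\bigl((p-1)t^{p-2}(t+1) + (t^{p-1}+1)\bigr) - 2(t^{p/2}+1)\cdot \tfrac{p}{2}t^{p/2-1},
\]
and I would group this as $p$ times $\bigl[(p-1)t^{p-1} + (p-1)t^{p-2} + t^{p-1} + 1 - t^{p-1} - t^{p/2-1}\bigr] = p\bigl[(p-1)t^{p-1} + (p-1)t^{p-2} + 1 - t^{p/2-1}\bigr]$. For $t\ge 1$ and $p>2$ we have $t^{p/2-1}\le t^{p-2}\le (p-1)t^{p-2}$, so the bracket is at least $(p-1)t^{p-1} + 1 > 0$. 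The $-$ case is analogous: $g'(t) = p\bigl[(p-1)t^{p-1} + (p-1)t^{p-2} - 1 - t^{p/2-1}\cdot(\text{sign adjustments})\bigr]$ — more carefully, $\frac{d}{dt}(t^{p-1}-1)(t-1) = (p-1)t^{p-2}(t-1) + (t^{p-1}-1) = (p-1)t^{p-1} - (p-1)t^{p-2} + t^{p-1} - 1$ and $\frac{d}{dt}(t^{p/2}-1)^2 = p t^{p/2-1}(t^{p/2}-1) = p(t^{p-1} - t^{p/2-1})$, so $g'(t) = p\bigl[pt^{p-1} - (p-1)t^{p-2} - 1 - t^{p-1} + t^{p/2-1}\bigr] = p\bigl[(p-1)t^{p-1} - (p-1)t^{p-2} - 1 + t^{p/2-1}\bigr]$; since for $t\ge 1$, $(p-1)t^{p-1} - (p-1)t^{p-2} \ge 0$ and one then needs $t^{p/2-1}\ge 1$, the bracket is $\ge 0$, and in fact $>0$ for $t>1$ because the first difference is strictly positive there.

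The only delicate point is making the inequality \emph{strict} everywhere, including at $t=1$ in the $-$ case where $g(1)=0$. There I would argue that $g'(1^+)>0$: plugging $t=1$ into the bracket for the $-$ case gives $(p-1) - (p-1) - 1 + 1 = 0$, so the first derivative also vanishes at $t=1$ and I would need the second derivative, or equivalently a short Taylor expansion of both sides near $t=1$. Writing $t = 1+\epsilon$, the left side $(t^{p/2}-1)^2 = \tfrac{p^2}{4}\epsilon^2 + O(\epsilon^3)$ while the right side $p(t^{p-1}-1)(t-1) = p(p-1)\epsilon^2 + O(\epsilon^3)$, and $p(p-1) > \tfrac{p^2}{4}$ iff $4(p-1) > p$ iff $p > 4/3$, which holds for $p>2$; so the inequality is strict in a punctured neighborhood of $t=1$, and the monotonicity argument ($g'>0$ for $t>1$ bounded away from $1$) extends it to all $t>1$. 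This second-order comparison at the diagonal is the main obstacle; everything else is routine calculus on an explicit one-variable function. Finally I would note that the $p>2$ hypothesis is used both in the sign of $g'$ and in the diagonal expansion, so the restriction is natural.
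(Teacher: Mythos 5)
Your proof is correct and follows essentially the same route as the paper: normalize by homogeneity to reduce to one variable $t\ge 1$ and compare the two sides by calculus (the paper uses a second-derivative comparison for the minus sign and a direct algebraic expansion for the plus sign, whereas you use first derivatives throughout, which is a minor variation in execution). Two small remarks: your Taylor expansion at $t=1$ is superfluous, since $g(1)=0$ together with $g'(t)>0$ for all $t>1$ already gives $g(t)=\int_1^t g'(s)\,ds>0$; and at $x=y$ the minus-sign inequality is in fact an equality, so the strict $<$ must be read as $\le$ there, exactly as in the paper's own proof.
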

\begin{proof}  Assume $x\geq y$ and note the inequality is trivial if $y = 0.$ If $y > 0$, divide the expression by $y^p.$ We see it is sufficient to prove the inequality for $x' = x/y\geq 1$, $y' = 1$. Prove this first for the minus sign, which is less standard. At $x = 1$, both sides and their derivatives are zero. If we compute the second derivatives of each side, we get on the right
\[
     p\left(p(p-1)x^{p-2} - (p-1)(p-2)x^{p-3}\right) = p(p-1)x^{p-3}(px- (p-2)) \geq     p(p-1)x^{p-2}.
     \] 
Note we used $x\geq1$ for this last step.
And on the left
\[
         p(p-1)x^{p-2} - p(p/2 - 1)x^{p/2 - 2} < p(p-1)x^{p-2}.
         \]
So the second derivative of the right-hand expression is less than the second derivative on the left, and we can concluded the inequality.
     For the $+$ sign, we simply write out left and right sides. Assuming $p>2$,
 \begin{eqnarray*}
 (x^{p/2} + 1)^2 = x^p +2 x^{p/2} + 1\leq x^p +2 x^{p-1} + 1\\
 \leq  2(x^{p-1} + 1)(x+1) < p(x^{p-1} + 1)(x+1).
\end{eqnarray*}
\end{proof}

\begin{proposition} \label{propineq1}  
\begin{eqnarray*}
(S_{p/2}(A) - S_{p/2}(B); S_{p/2}(A) - S_{p/2}(B))^\sharp \leq p (S_{p-1}(A) - S_{p-1}(B), A- B)^\sharp.
\end{eqnarray*}
\end{proposition}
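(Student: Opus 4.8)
The plan is to reduce the matrix inequality to the scalar inequality of Lemma~\ref{lemmappp} by simultaneously diagonalizing, as much as possible, the relevant symmetric operators. The first move is to choose orthonormal bases of $V_1$ and $V_2$ realizing the singular value decompositions of $A$ and $B$. Since $\dim V_1 = \dim V_2 = 2$, we can write $A$ and $B$ in terms of their singular values $a_1 \geq a_2 \geq 0$ and $b_1 \geq b_2 \geq 0$ together with the orthogonal changes of basis relating their singular frames. Then $S_{p/2}(A) = Q(A)^{p/2-1}A$ has singular values $a_i^{p/2}$ and $S_{p-1}(A) = Q(A)^{p-2}A$ has singular values $a_i^{p-1}$ (similarly for $B$), so the two sides of the claimed inequality become bilinear expressions in the two singular frames.

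Next I would expand both sides using $(X,Y)^\sharp = \mathrm{Tr}(X Y^\sharp)$ (here $\sharp$ is just the ordinary transpose, since $A,B$ map between ordinary inner product spaces — the $\sharp$ notation is inherited from the ambient indefinite setting but the restriction to the tangent spaces is positive definite by Lemma~\ref{metricHemb}). Writing the left side as $|S_{p/2}(A)|_2^2 - 2(S_{p/2}(A); S_{p/2}(B))^\sharp + |S_{p/2}(B)|_2^2$ and likewise the right side, the diagonal terms contribute $\sum_i a_i^p$, $\sum_i b_i^p$ on the left and $p\sum_i a_i^p$, $p\sum_i b_i^p$ on the right, and since $p > 2$ these already satisfy the inequality with room to spare. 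The crux is therefore the cross terms: one must show
\[
(S_{p/2}(A); S_{p/2}(B))^\sharp \geq \tfrac{p}{2}\left( |S_{p/2}(A)|_2^2 + |S_{p/2}(B)|_2^2 \right) - \tfrac{p}{2}\cdot 2\,(S_{p-1}(A); B)^\sharp - \ldots
\]
— more precisely, after rearranging, it suffices to prove the pointwise scalar statement that for the angles $\theta_{ij}$ between the $i$-th left singular vector of $A$ and the $j$-th of $B$ (and correspondingly on the source side), the combination of $a_i^{p/2} b_j^{p/2}$ terms is dominated by the combination of $p\, a_i^{p-1} b_j + p\, a_i b_j^{p-1}$ terms weighted by the same angle factors. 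The key algebraic input is Lemma~\ref{lemmappp}, $(x^{p/2}\pm y^{p/2})^2 < p(x^{p-1}\pm y^{p-1})(x\pm y)$, applied with $x = a_i$, $y = b_j$; the $\pm$ flexibility is exactly what lets one absorb both the positively-weighted and negatively-weighted cross terms coming from $\cos\theta_{ij}$ of either sign.

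The main obstacle I anticipate is bookkeeping the trigonometric weights: in dimension 2 the relation between the singular frame of $A$ and that of $B$ is encoded by a single rotation angle on each side (source and target), and one must check that the quadratic form in these $\cos$ and $\sin$ factors obtained by subtracting the two sides is genuinely nonnegative, uniformly in the angles, using only Lemma~\ref{lemmappp} and $\sum \cos^2 + \sin^2 = 1$. I would handle this by grouping terms into sums of the shape (nonnegative trig coefficient)$\times$(Lemma~\ref{lemmappp} difference, which is positive), possibly completing the square where a mixed sign appears. A cleaner route, if it works, is to avoid full diagonalization: write the difference of the two sides as $\mathrm{Tr}$ of a symmetric operator built from $Q(A)$, $Q(B)$ and the "transport" $A^T B$, and bound it by testing against eigenvectors — but I expect the explicit $2\times 2$ computation with the scalar lemma to be the surest path, at the cost of being somewhat computational.
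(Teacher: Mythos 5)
Your proposal follows essentially the same route as the paper: singular value decomposition of $A$ and $B$ in dimension $2$ with one rotation angle on each of the source and target sides, expansion of both pairings into sums weighted by products of cosines and sines, and termwise application of Lemma~\ref{lemmappp} with the $\pm$ sign chosen to make the trigonometric coefficients nonnegative. The only refinement the paper makes that you should adopt is the precise grouping: rather than separating "diagonal" from "cross" terms, one writes both sides as $\sum_j(\alpha_j^p+\beta_j^p)(1-E_1-E_2)$ plus $E_1,E_2$-weighted products (with $E_1,E_2\geq 0$, $E_1+E_2\leq 1$), so the first pieces coincide exactly and Lemma~\ref{lemmappp} handles the rest.
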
 

\begin{proof} Since the diagonalizable matrices  are dense, it suffices to prove the inequality for those. We will do this for all pointwise inequalities in this section without explicit mention each time.

 Let $A = \sum \alpha_j a_j \otimes A_j$ and $B = \sum \beta_j b_j \otimes B_j$   where $a_j$ and $b_j$ are orthonormal bases for $V_1$,  $A_j$ and $B_j$ are orthonormal bases for $V_2$ chosen so the real numbers $\alpha_j$ and $\beta_j$ are positive. Then we can write
 \begin{eqnarray*}
       b_1 &=& \cos \theta a_1 + \sin \theta a_2 \ \ \ b_2 = -\sin \theta a_1 + \cos \theta a_2\\
        B_1 &=& \cos \phi A_1 + \sin \phi A_2 \ \ B_2 = -\sin \phi A_1 + \cos \phi A_2.
\end{eqnarray*}
A direct computation gives 
 \begin{eqnarray*}
      \lefteqn{ (S_{p-1}(A) - S_{p-1}(B); A-B)^\sharp} \\
      &=&(Q(A)^{p-2}A - Q(B)^{p-2}B; A-B)^\sharp\\
    &=& (\sum_k (\alpha_k^{p-1}a_k \otimes A_k - \beta_k^{p-1}b_k \otimes B_k);  \sum_j (\alpha_j a_j \otimes A_j - \beta_j b_j \otimes B_j))^\sharp\\
   &=& \sum_j ( \alpha_j^p +\beta_j^p - 
      \cos \theta \cos \phi(\alpha_j^{p-1}\beta_j + \beta_j^{p-1}\alpha_j) -
       \sin \theta \sin \phi(\alpha_j^{p-1}\beta_{j'} + \beta_j^{p-1}\alpha_{j'})
 \end{eqnarray*}
Here $1' = 2$ and $2' = 1$.  Let 
\[ 
E_1 = (-1)^n \cos \theta\cos \phi \geq 0 \ \mbox{and} \  E_2 = (-1)^m \sin \theta\sin \phi \geq 0.
\]  
Note that $E_1 + E_2 \leq 1$.  We rewrite
 \begin{eqnarray*}
     \lefteqn{(S_{p-1}(A) - S_{p-1}B; A-B))^\sharp} \\
     &=& \sum_j  (\alpha_j^p + \beta_j^p) (1 - E_1 - E_2)
    + E_1 (\alpha_j^{p-1} - (-1)^n \beta_j^{p-1})(\alpha_j -(-1)^n\beta_j)\\
&+&E_2 (\alpha_j^{p-1} - (-1)^m \beta_{j'}^{p-1})(\alpha_j -(-1)^m \beta_{j'}).
 \end{eqnarray*}
Using the same rules, we get
 \begin{eqnarray}\label{Esecondterm1}
      \lefteqn{\left |S_{p/2}(A)-S_{p/2}(B) \right|^2}\nonumber \\
      &=&\sum_j ( \alpha_j^p + \beta_j^p )(1 - E_1 - E_2)
    + E_1 (\alpha_j^{p/2} - (-1)^n \beta_j^{p/2})^2  \\
&+&E_2 (\alpha_j^{p/2} - (-1)^m \beta_{j'}^{p/2})^2. \nonumber
 \end{eqnarray}
The first terms are equal in the two expressions, so the insertion of $p$ increases the right hand side.  Term by term, the inequality follows by applying Lemma~\ref{lemmappp}  to the pairwise expressions with the same indices in the sums which multiply the E's.
\end{proof} 

The next inequality is much easier and follows from the same computation. The relevant inequality in one variable is the following simple: 

\begin{lemma}\label{lemmappp234}For $x, y > 0$, $p>2$ and $z \geq \max\{x,y\}$
\[
       (x^{p-1} \pm y^{p-1})^2 < 4z^{p-1}(x^{p/2} \pm y^{p/2})^2. 
       \]
\end{lemma}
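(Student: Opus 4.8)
The plan is to prove the scalar inequality
\[
(x^{p-1} \pm y^{p-1})^2 < 4z^{p-1}(x^{p/2} \pm y^{p/2})^2
\]
for $x,y>0$, $p>2$, and $z \geq \max\{x,y\}$. Without loss of generality assume $x \geq y$, so $z \geq x$. The key observation is that $x^{p-1} \pm y^{p-1} = (x^{p/2} \pm y^{p/2})(x^{p/2-1} \mp \cdots)$ does not factor cleanly, so instead I would factor out a common power and compare. First I would write $a = x^{p/2}$ and $b = y^{p/2}$, so that $x^{p-1} = a^2/x$ and $y^{p-1} = b^2/y$; this suggests rewriting the left-hand side in terms of $a,b$ and the right-hand side as $4z^{p-1}(a \pm b)^2$.

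The cleanest route: since $x^{p-1} = x^{p/2} \cdot x^{p/2-1} \leq x^{p/2} z^{p/2-1}$ and similarly $y^{p-1} = y^{p/2} y^{p/2 - 1} \leq y^{p/2} z^{p/2-1}$ (using $x, y \leq z$ and $p/2 - 1 > 0$), we get $x^{p-1} \pm y^{p-1} \leq z^{p/2-1}(x^{p/2} \pm y^{p/2})$ in the $+$ case immediately, and in the $-$ case one must be a little careful with signs since $x^{p-1} - y^{p-1} \geq 0$ and $x^{p/2} - y^{p/2} \geq 0$; here I would write $x^{p-1} - y^{p-1} = x^{p/2}x^{p/2-1} - y^{p/2}y^{p/2-1} \leq z^{p/2-1}(x^{p/2} - y^{p/2})$ because replacing $x^{p/2-1}$ by the larger $z^{p/2-1}$ and $y^{p/2-1}$ by the larger $z^{p/2-1}$ respectively increases the first term and decreases (the magnitude of) the subtracted term — wait, this needs the subtracted term to go the right way, so more precisely $x^{p/2}x^{p/2-1} - y^{p/2}y^{p/2-1} \leq z^{p/2-1}x^{p/2} - y^{p/2}y^{p/2-1} \leq z^{p/2-1}x^{p/2} - y^{p/2} \cdot 0$ is too lossy; the honest bound is $x^{p-1}-y^{p-1} \le z^{p/2-1}(x^{p/2}-y^{p/2}) + y^{p/2}(z^{p/2-1}-y^{p/2-1})$ which is not obviously what we want. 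So instead I would bound more crudely: $(x^{p-1}\pm y^{p-1})^2 \leq (x^{p-1}+y^{p-1})^2 \leq (2z^{p/2-1}\max\{x^{p/2},y^{p/2}\})^2 \cdot$ something, and then compare to $(x^{p/2}\pm y^{p/2})^2$.

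Actually the most robust approach, and the one I expect the authors use, is to bound the left side as $(x^{p-1}\pm y^{p-1})^2 \le \big(z^{p/2-1}\big)^2 (x^{p/2}\pm y^{p/2})^2$ in the $+$ case and handle the $-$ case by noting $(x^{p-1}-y^{p-1})^2 \le z^{p-2}(x^{p/2}-y^{p/2})^2 \cdot$ (bounded factor): write $x^{p-1}-y^{p-1} = (x^{p/2}-y^{p/2})(x^{p/2-1}+\text{mixed terms})$ is false, but $|x^{p-1}-y^{p-1}| \le \max\{x^{p/2-1},y^{p/2-1}\}\,|x^{p/2}-y^{p/2}| + \max\{x^{p/2},y^{p/2}\}\,|x^{p/2-1}-y^{p/2-1}|$ via the product-difference identity $ac-bd = c(a-b)+b(c-d)$ with $a=x^{p/2}, b=y^{p/2}, c=x^{p/2-1},d=y^{p/2-1}$. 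Since $p/2-1 \le p/2$ and all quantities are controlled by $z$, and $|x^{p/2-1}-y^{p/2-1}| \le |x^{p/2}-y^{p/2}|\cdot z^{-1}$-type estimates hold by monotonicity of $t\mapsto t^{p/2-1}$ versus $t\mapsto t^{p/2}$ on $[0,z]$ (namely $x^{p/2-1}-y^{p/2-1} \le \frac{p/2-1}{p/2} z^{-1}(x^{p/2}-y^{p/2}) \le z^{-1}(x^{p/2}-y^{p/2})$ by the mean value theorem applied to $s\mapsto s^{p/2}$ and $s\mapsto s^{p/2-1}$ and comparing derivatives), one assembles $|x^{p-1}-y^{p-1}| \le z^{p/2-1}|x^{p/2}-y^{p/2}| + z^{p/2}\cdot z^{-1}|x^{p/2}-y^{p/2}| = 2z^{p/2-1}|x^{p/2}-y^{p/2}|$, whence squaring gives $(x^{p-1}-y^{p-1})^2 \le 4z^{p-2}(x^{p/2}-y^{p/2})^2 \le 4z^{p-1}(x^{p/2}-y^{p/2})^2$ (since $z \geq \max\{x,y\}\geq$ whatever makes $z\geq 1$? — if $z<1$ then $z^{p-2}\le z^{p-1}$ fails; so actually I need $z \ge 1$ or the sharper bound $4z^{p-2}$ suffices and $z^{p-2}\le z^{p-1}$ only when $z\ge1$). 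The main obstacle I anticipate is precisely this bookkeeping of which power of $z$ appears — whether it is $z^{p-2}$ or $z^{p-1}$ and whether the constant $4$ is tight — so I would double-check the derivative comparison $\frac{d}{ds}s^{p/2-1} = (p/2-1)s^{p/2-2}$ versus $\frac{d}{ds}s^{p/2}=(p/2)s^{p/2-1}$ giving ratio $\frac{p/2-1}{(p/2)s} \le \frac{1}{s}$, integrate, and confirm the final power of $z$ agrees with what the subsequent application in the paper requires; the $+$ case is genuinely routine as shown.

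\medskip

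\noindent\textbf{Remark on the plan.} The $+$ case follows immediately from $x^{p-1} \le z^{p/2-1}x^{p/2}$, $y^{p-1}\le z^{p/2-1}y^{p/2}$, giving $(x^{p-1}+y^{p-1})^2 \le z^{p-2}(x^{p/2}+y^{p/2})^2$, which is even stronger than claimed. The content is the $-$ case, where the product-difference identity $ac - bd = c(a-b) + b(c-d)$ with the mean-value comparison of exponents $p/2$ and $p/2-1$ is the crux. I expect the final inequality with the stated constant $4$ (rather than $1$) is included precisely to absorb the two terms produced by this identity, so no sharper argument is needed.
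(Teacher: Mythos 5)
Your $+$ case is correct and in fact gives the stronger constant $1$ in place of $4$. The $-$ case, however, has a genuine gap. The step you rely on,
\[
x^{p/2-1}-y^{p/2-1}\ \le\ z^{-1}\bigl(x^{p/2}-y^{p/2}\bigr),
\]
is false for general $z\ge\max\{x,y\}$: take $p=4$, $x=2$, $y=1$, $z=100$, where the left side is $1$ and the right side is $3/100$. The mean-value justification does not save it, because the pointwise derivative comparison you invoke, $(p/2-1)s^{p/2-2}\le z^{-1}(p/2)s^{p/2-1}$, holds only for $s\ge z(1-2/p)$, which fails on the lower part of $[y,x]$ whenever $y$ is small relative to $z$. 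The repair is to observe first that the right-hand side of the lemma is increasing in $z$ (as $p/2-1>0$), so one may assume $z=\max\{x,y\}=x$; in that case the displayed inequality reduces by direct algebra to $xy^{p/2-1}\ge y^{p/2}$, i.e.\ $x\ge y$, and your product-difference decomposition $ac-bd=c(a-b)+b(c-d)$ then assembles correctly into $|x^{p-1}-y^{p-1}|\le 2x^{p/2-1}|x^{p/2}-y^{p/2}|\le 2z^{p/2-1}|x^{p/2}-y^{p/2}|$. Without that reduction, your chain $y^{p/2}(x^{p/2-1}-y^{p/2-1})\le z^{p/2}\cdot z^{-1}(x^{p/2}-y^{p/2})$ passes through a false inequality even though its endpoints happen to be comparable.

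For comparison, the paper argues differently: it normalizes $y=1$ (so $z\ge x\ge1$), reduces to the unsquared inequality $x^{p-1}\pm1\le 2z^{p/2-1}(x^{p/2}\pm1)$, checks it at $x=1$, and compares derivatives in $x$, using $(p-1)x^{p-2}\le p\,z^{p/2-1}x^{p/2-1}$. Both routes prove the unsquared bound with factor $2z^{p/2-1}$ and hence, after squaring, the constant $4z^{p-2}$ — not $4z^{p-1}$. You were right to flag this: the exponent $p-1$ in the statement is evidently a typo for $p-2$ (the statement with $z^{p-1}$ is not even homogeneous and fails for small $z$, e.g.\ $x=z=0.1$, $y\to0$, $p=4$), and it is the $z^{p-2}$ version that is used in Proposition~\ref{propineq2}.
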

\begin{proof}Assume without loss of generality that $x \geq y$. Divide both sides by $y^{p-1}$, so it suffices to prove
\[
       (x^{p-1} \pm 1) \leq 2z^{p/2-1}(x^{p/2} \pm 1) \ \ \mbox{for} \ x \geq 1.
 \]
 The inequality holds for $x = 1.$ Since the derivative of the left hand side is less than the derivative of the right hand side the inequality follows.
\end{proof}

\begin{proposition}\label{propineq2}
 \begin{eqnarray*}
\left |S_{p-1}(A) - S_{p-1}(B) \right|^2 \leq
      4(\max(s(A),s(B))^{p-2}\left | S_{p/2}(A)-S_{p/2}(B) \right|^2.
       \end{eqnarray*}
\end{proposition}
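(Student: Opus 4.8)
The plan is to mirror exactly the proof of Proposition~\ref{propineq1}: reduce to diagonalizable matrices by density, diagonalize $A$ and $B$ in adapted orthonormal bases, and then carry out the same trigonometric bookkeeping with $E_1, E_2$, reducing the matrix inequality to the one-variable statement of Lemma~\ref{lemmappp234}. Concretely, I would write $A = \sum_j \alpha_j\, a_j \otimes A_j$ and $B = \sum_j \beta_j\, b_j \otimes B_j$ with $\{a_j\},\{b_j\}$ orthonormal bases of $V_1$, $\{A_j\},\{B_j\}$ orthonormal bases of $V_2$, and $\alpha_j,\beta_j \ge 0$, then introduce the rotation angles $\theta$ (between the $a$-basis and $b$-basis) and $\phi$ (between the $A$-basis and $B$-basis) exactly as in the previous proof, so that the same expansion applies.

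The key computational step is to recognize that the quantity $\left|S_{p-1}(A)-S_{p-1}(B)\right|^2$ admits precisely the same decomposition as $\left|S_{p/2}(A)-S_{p/2}(B)\right|^2$ in display~(\ref{Esecondterm1}), except with the exponent $p/2$ replaced by $p-1$: namely one gets a sum over $j$ of $(\alpha_j^{p-1}+\beta_j^{p-1})^{?}$-type terms weighted by $1-E_1-E_2$, $E_1$, and $E_2$, where the last two multiply terms of the form $(\alpha_j^{p-1}-(-1)^n\beta_j^{p-1})^2$ and $(\alpha_j^{p-1}-(-1)^m\beta_{j'}^{p-1})^2$. This is the content of the phrase ``follows from the same computation'' in the paper. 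Since $1-E_1-E_2 \ge 0$ and the unweighted ``diagonal'' terms $(\alpha_j^{p-1}-\beta_j^{p-1})^2$ etc.\ can be bounded against $(\alpha_j^{p/2}-\beta_j^{p/2})^2$ term by term via Lemma~\ref{lemmappp234} with $z=\max(s(A),s(B)) \ge \max(\alpha_j,\beta_j)$ (noting $z^{p-2} = z^{2(p/2-1)}$ so the factor $4z^{p-1}$ in the one-variable lemma combines correctly after squaring—here one uses $(x^{p-1}\pm y^{p-1})^2 \le 4z^{p-1}(x^{p/2}\pm y^{p/2})^2$ pointwise), the whole inequality follows by summing. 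The $1-E_1-E_2$ weighted part needs the trivial bound $(\alpha_j^{p-1})^2 \le z^{p-2}\alpha_j^p$ and similarly for $\beta_j$, which is immediate since $z \ge \alpha_j,\beta_j$.

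The main obstacle I expect is purely bookkeeping: making sure every term of the $S_{p-1}$-norm-squared decomposition is dominated by the matching term of $4\max(s(A),s(B))^{p-2}$ times the $S_{p/2}$-norm-squared decomposition, including the mixed-index $E_2$ terms where $\beta_{j'}$ appears rather than $\beta_j$ (there one applies Lemma~\ref{lemmappp234} with $x=\alpha_j$, $y=\beta_{j'}$, $z=\max(s(A),s(B))$, which still dominates both). Since the first, $E$-free terms are structurally identical in both decompositions up to replacing $p-1$ by $p/2$ in the exponents, and since $s(A)=\max_j\alpha_j$, $s(B)=\max_j\beta_j$, the uniform bound $z^{p-2}$ covers them as well. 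No curvature or analytic input is needed—this is entirely an exercise in the same linear-algebra computation as Proposition~\ref{propineq1}, and I would simply say so and invoke Lemma~\ref{lemmappp234} in place of Lemma~\ref{lemmappp}.
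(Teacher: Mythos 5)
Your proposal is correct and follows essentially the same route as the paper: the same adapted-basis decomposition as in Proposition~\ref{propineq1}, giving the expansion of $\left|S_{p-1}(A)-S_{p-1}(B)\right|^2$ with $p$ replaced by $2p-2$, followed by a term-by-term comparison using Lemma~\ref{lemmappp234} for the $E_1,E_2$ terms and the trivial bound $\alpha_j^{2p-2}\le z^{p-2}\alpha_j^p$ for the $(1-E_1-E_2)$ terms. The only caveat is the power of $z$: what is actually needed (and what the proof of Lemma~\ref{lemmappp234} establishes, after squaring $(x^{p-1}\pm 1)\le 2z^{p/2-1}(x^{p/2}\pm 1)$) is $(x^{p-1}\pm y^{p-1})^2\le 4z^{p-2}(x^{p/2}\pm y^{p/2})^2$, with $z^{p-2}$ rather than the $z^{p-1}$ you quote from the lemma's statement.
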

 \begin{proof} The left hand side of this inequality is the same as the one in (\ref{Esecondterm1}) with $p$ replaced by $2p-2$. Thus,
\begin{eqnarray}\label{Esecondterm2}
\lefteqn{\left |S_{p-1}(A) - S_{p-1}(B) \right|^2} \nonumber \\
     &=& \left |Q(A)^{p-2}A - Q(B)^{p-2}B \right|^2 \nonumber\\
      &=&\sum_j ( \alpha_j^{2p-2} + \beta_j^{2p-2} )(1 - E_1 - E_2)
    + E_1 (\alpha_j^{p-1} - (-1)^n \beta_j^{p-1})^2\\
&+&E_2 (\alpha_j^{p-1} - (-1)^m \beta_{j'}^{p-1})^2.\nonumber
 \end{eqnarray}
 If we multiply the terms of four times the expression of $|Q(A)^{p/2-1}A - Q(B)^{p/2-1}B|^2$ given in (\ref{Esecondterm1}) by the largest of  the four terms $\alpha_k^{p-2}$ and $\beta_k^{p-2}$, $k = 1,2$,  this  will dominate the  corresponding terms of $|Q(A)^{p-2}A - Q(B)^{p-2}B)|^2$ given in (\ref{Esecondterm2}). 
\end{proof}

In the next inequality, we look at a slightly more complicated situation. Let
\[
\tilde B: V_1 =T_xM \rightarrow \tilde V_2 = T_YN \subset R^{2,1}
\]
 where $Y = w(x)$ is a different point than $X = u(x).$ Let 
 \[
       B = \tilde B_X = \tilde B + (\tilde B,X)^\sharp X =\tilde B + (\tilde B, X-Y)^\sharp X.
       \]
Note that $(\tilde B, X)^\sharp$ is a cotangent (= tangent) vector on $M$ 
defined by
\[
(\tilde B,X)^\sharp(a)=(\tilde B(a),X)^\sharp \ \mbox{where} \ a \in V_1=T_xM.
\]
Also (after identifying tangent with cotangent vectors)
\begin{eqnarray}\label{mathcalQproj}
   \mathcal Q(B)^2 &= & \mathcal Q(\tilde B)^2 + (\tilde B, X)^\sharp \otimes (\tilde B, X )^\sharp  \\
   &= & \mathcal Q(\tilde B)^2 + (\tilde B, X-Y)^\sharp \otimes (\tilde B, X-Y )^\sharp. \nonumber
 \end{eqnarray}
 Indeed, for $a, c \in V_1=T_xM$,
 \begin{eqnarray*}
 (\mathcal Q(B)^2a;c) &=&(Ba, Bc)^\sharp \\
 &=& (\tilde Ba + (\tilde Ba, X)^\sharp X, \tilde Bc + (\tilde Bc, X)^\sharp X)^\sharp \\
 &=& (\mathcal Q(\tilde B)^2a;c)+(\tilde Ba, X)^\sharp (\tilde Bc, X)^\sharp.
 \end{eqnarray*}

\begin{proposition} \label{morecomp1} If $\tilde B_Y = \tilde B$ (namely, $ \tilde B $ is in the tangent space of $N$ at $Y$), then   
\[
((S_{p-1}(\tilde B),X)^\sharp;  (\tilde B,X)^\sharp) \leq 
             Tr Q(\tilde B)^p \delta(X,Y)(1 + 1/4\delta(X,Y))
            \]
\end{proposition}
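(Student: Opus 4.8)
The plan is to bound the left-hand side by expressing everything in terms of the quantity $(\tilde B, X)^\sharp = (\tilde B, X - Y)^\sharp$, which is a cotangent vector on $M$ with norm controlled by the singular values of $\tilde B$ and the separation $\delta(X,Y)$. First I would observe that since $\tilde B_Y = \tilde B$ and $S_{p-1}(\tilde B) = Q(\tilde B)^{p-2}\tilde B$ is built algebraically from $\tilde B$, it is also in the tangent space to $N$ at $Y$; hence both $(S_{p-1}(\tilde B), X)^\sharp = (S_{p-1}(\tilde B), X - Y)^\sharp$ and $(\tilde B, X)^\sharp = (\tilde B, X-Y)^\sharp$ factor through evaluation against $X - Y$, and moreover, writing $(X-Y)_Y$ for the projection of $X - Y$ into $T_Y N$, we have $(\tilde B, X - Y)^\sharp(a) = (\tilde B(a), (X-Y)_Y)^\sharp$ for $a \in T_xM$ because $\tilde B(a)$ is tangent at $Y$. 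So the problem reduces to a pointwise estimate purely inside the positive-definite inner product space $T_YN$.

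Next, working in $T_YN$ with the now-positive-definite $(\,,\,)^\sharp$, I would diagonalize: write $\tilde B = \sum_j \alpha_j\, a_j \otimes A_j$ with $\{a_j\}$ orthonormal in $V_1$ and $\{A_j\}$ orthonormal in $T_YN$, $\alpha_j \geq 0$, so that $S_{p-1}(\tilde B) = \sum_j \alpha_j^{p-1} a_j \otimes A_j$. Let $\eta = (X-Y)_Y \in T_YN$ have components $\eta_j = (A_j, \eta)^\sharp$. Then $(\tilde B, X)^\sharp = \sum_j \alpha_j \eta_j\, a_j^*$ and $(S_{p-1}(\tilde B), X)^\sharp = \sum_j \alpha_j^{p-1}\eta_j\, a_j^*$ as cotangent vectors on $M$, whence by Cauchy–Schwarz their inner product is $\sum_j \alpha_j^p \eta_j^2 \leq \big(\sum_j \alpha_j^p\big)\big(\sum_j \eta_j^2\big) = Tr Q(\tilde B)^p \cdot |\eta|^2$. (Alternatively one can just bound each term $\alpha_j^p \eta_j^2 \le \alpha_j^p |\eta|^2$ and sum.)

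The remaining ingredient is the bound $|\eta|^2 = (\,(X-Y)_Y, (X-Y)_Y\,)^\sharp \leq \delta(X,Y)\big(1 + \tfrac14 \delta(X,Y)\big)$. This is exactly the computation already carried out in the proof of Lemma~\ref{supptLemma2}, where it is shown via $(Y-X,Y)^\sharp = \tfrac12(X-Y,X-Y)^\sharp$ that $(X-Y)_Y = X - Y + (X-Y,Y)^\sharp Y$ has squared norm $(X-Y,X-Y)^\sharp\big(1 + \tfrac14(X-Y,X-Y)^\sharp\big)$; I would simply cite that identity. Combining the two displays gives $((S_{p-1}(\tilde B),X)^\sharp; (\tilde B,X)^\sharp) \leq Tr Q(\tilde B)^p\, \delta(X,Y)\big(1 + \tfrac14\delta(X,Y)\big)$, as desired.

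I do not anticipate a serious obstacle here: the only subtlety is making sure the two factored expressions really do pair as claimed once restricted to $T_YN$, i.e. that $S_{p-1}(\tilde B)$ shares the singular vectors of $\tilde B$ so that the cross terms drop and Cauchy–Schwarz is applied along matching indices; this is immediate from the polar decomposition. The rest is bookkeeping plus the already-established geometric estimate from Lemma~\ref{supptLemma2}.
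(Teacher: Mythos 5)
Your proposal is correct and follows essentially the same route as the paper: diagonalize $\tilde B$ by its singular value decomposition so the cross terms drop and the pairing becomes $\sum_j \tilde\beta_j^p\,((\tilde B_j,X)^\sharp)^2$, then invoke the estimate (\ref{strineq1}) from Lemma~\ref{supptLemma2} (equivalently, your exact computation of $|(X-Y)_Y|^2 = \delta(X,Y)(1+\tfrac14\delta(X,Y))$ via $(Y-X,Y)^\sharp = \tfrac12\delta(X,Y)$, followed by Cauchy--Schwarz in $T_YN$). The only cosmetic difference is that you compute $|(X-Y)_Y|^2$ once and sum, while the paper applies (\ref{strineq1}) to each unit singular vector $\tilde B_j$ separately; both are valid and equivalent.
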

\begin{proof}  We can decompose $\tilde B = \sum_j \tilde \beta_j b_j \otimes \tilde B_j$ where $b_j$ and $\tilde B_j$ are orthonormal bases of $V_1=T_xM$ and  $V_2 = T_YN$ respectively. Then
\begin{eqnarray*}
     ((S_{p-1}(\tilde B),X)^\sharp;(\tilde B,X)^\sharp )&=& 
\sum_j \tilde \beta_j^{p-1}b_j(\tilde B_j,X)^\sharp \sum_k \tilde \beta_k b_k (\tilde B_k,X)^\sharp \\
    &=& \sum_j \tilde \beta_j^p {(\tilde B_j,X)^\sharp}^2 \\
    &\leq&\sum_j \tilde \beta_j^p{(\tilde B_j, \tilde B_j)^\sharp}^2\delta(X,Y)(1 + 1/4\delta(X,Y))\\
    &=& \sum_j \tilde \beta_j^p\delta(X,Y)(1 + 1/4\delta(X,Y)).
\end{eqnarray*} 
In the above the  inequality follows from (\ref{strineq1}).
\end{proof}

Now we come to the most complicated situation.  The terms we are estimating do not appear in the final computation of the derivative from difference quotients, so they are smaller than our other terms. Because $p$ is large, it is a real nuisance to compute.  We do an easy computation to warm up.  We could get better decay for $p \geq 8,$ but we give the proof which includes $p = 4.$ We assume that $\delta(X,Y) = (X-Y, X-Y )^\sharp < 1/10$, so as not to carry around an extra factor.

\begin{lemma}\label{props5}   Let  $ \mathcal   Q(B)^{2q} - \mathcal Q (\tilde B)^{2q}: V_1 \rightarrow V_1$ be a symmetric map, with $\tilde B_Y = \tilde B$ and $\tilde B_X = B.$  Then the eigenvalues of $\mathcal   Q(B)^{2q} - \mathcal Q (\tilde B)^{2q}$ are non-negative and bounded above by $2q\beta_1^{2q}\delta(X,Y).$
\end{lemma}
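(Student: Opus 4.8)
The plan is to reduce the estimate on $\mathcal Q(B)^{2q}-\mathcal Q(\tilde B)^{2q}$ to the rank-one perturbation formula (\ref{mathcalQproj}) together with the elementary estimate on $(\tilde B,X-Y)^\sharp$ coming from (\ref{strineq1}), and then apply a standard monotonicity/convexity fact about the map $R\mapsto R^{q}$ on non-negative symmetric matrices.

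First I would record from (\ref{mathcalQproj}) that $\mathcal Q(B)^2=\mathcal Q(\tilde B)^2+\zeta\otimes\zeta$, where $\zeta=(\tilde B,X-Y)^\sharp$ is a cotangent vector on $M$; in particular $\mathcal Q(B)^2\geq\mathcal Q(\tilde B)^2$ in the sense of symmetric operators, and the difference is rank one with operator norm $|\zeta|^2$. From the computation in the proof of Proposition~\ref{morecomp1} (or directly from (\ref{strineq1}) applied coordinatewise) one has $|\zeta|^2\leq \beta_1^2\,\delta(X,Y)(1+\tfrac14\delta(X,Y))$, and since we are assuming $\delta(X,Y)<1/10$ this is bounded by, say, $\beta_1^2\,\delta(X,Y)$ up to a harmless constant close to $1$ — I will keep track of constants only loosely here since the final bound only needs the factor $2q\beta_1^{2q}\delta(X,Y)$. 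Writing $R=\mathcal Q(\tilde B)^2$ and $R'=\mathcal Q(B)^2$, we have $0\leq R\leq R'$ and $\|R'-R\|\leq\beta_1^2\delta(X,Y)$, and also $\|R'\|\leq \beta_1^2+|\zeta|^2\leq\beta_1^2(1+\delta(X,Y))$, which is at most, say, $\tfrac{11}{10}\beta_1^2$.

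The key step is then the operator inequality: for $0\leq R\leq R'$ symmetric one has $0\leq (R')^{q}-R^{q}$, and moreover $\|(R')^{q}-R^{q}\|\leq q\,\max(\|R\|,\|R'\|)^{q-1}\,\|R'-R\|$. The non-negativity is operator monotonicity of $t\mapsto t^{q}$ for $q\geq 1$ (here $q=p/2\geq 2$); the norm bound follows from the mean-value-type estimate for the (operator-)function $t\mapsto t^q$, or more elementarily by writing $(R')^{q}-R^{q}=\sum_{i=0}^{q-1}(R')^{i}(R'-R)R^{q-1-i}$ when $q$ is an integer and estimating each summand, then passing to general $q$ by the usual density/interpolation argument we have been using for pointwise inequalities. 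Combining this with the two bounds from the previous step gives eigenvalues of $(R')^{q}-R^{q}$ that are non-negative and at most $q\cdot(\tfrac{11}{10}\beta_1^2)^{q-1}\cdot\beta_1^2\delta(X,Y)$, and absorbing the benign constant $(\tfrac{11}{10})^{q-1}$ — which is where the hypothesis $\delta(X,Y)<1/10$ is really being used, to keep $\|R'\|$ close to $\beta_1^2$ — yields the stated bound $2q\beta_1^{2q}\delta(X,Y)$. (One should double-check the arithmetic: $\|R'\|\le\beta_1^2(1+\delta)$ with $\delta<1/10$ gives $\|R'\|^{q-1}\le\beta_1^{2(q-1)}(1+\delta)^{q-1}$, and $(1+\delta)^{q-1}$ is not uniformly bounded in $q$; so in fact one wants the sharper route below.)

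The cleaner argument, and the one I would actually write, avoids the factor $(1+\delta)^{q-1}$ by diagonalizing: since everything is $2\times2$, let $\beta_1\geq\beta_2\geq0$ be the singular values of $\tilde B$ and let $\mu_1\geq\mu_2\geq0$ be those of $B$. By the rank-one perturbation and Weyl's interlacing inequalities, $\mu_1^2\in[\beta_1^2,\beta_1^2+|\zeta|^2]$ and $\mu_2^2\in[\beta_2^2,\beta_1^2]$ with $\mu_1^2+\mu_2^2=\beta_1^2+\beta_2^2+|\zeta|^2$; in particular $\mu_1\le\beta_1(1+\delta)^{1/2}$, and since $t\mapsto t^q$ is monotone and convex, $\mu_1^{2q}-\beta_1^{2q}\le 2q\,\mu_1^{2q-2}(\mu_1^2-\beta_1^2)$. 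Here is the one genuinely delicate point — the main obstacle — namely controlling the top eigenvalue of the matrix difference $\mathcal Q(B)^{2q}-\mathcal Q(\tilde B)^{2q}$ rather than just the difference of eigenvalues, since the eigenvectors of $\mathcal Q(B)^{2}$ and $\mathcal Q(\tilde B)^{2}$ need not agree. But operator monotonicity of $t\mapsto t^q$ (valid for $q\geq1$) gives $\mathcal Q(B)^{2q}\geq\mathcal Q(\tilde B)^{2q}$ directly, so the difference is non-negative, and its largest eigenvalue is at most $\mathrm{Tr}\big(\mathcal Q(B)^{2q}-\mathcal Q(\tilde B)^{2q}\big)=\sum_i(\mu_i^{2q}-\text{corresponding }\beta^{2q})$, which by the convexity estimate and $\sum_i(\mu_i^2-\beta_i^2)=|\zeta|^2\le\beta_1^2\delta(X,Y)(1+\tfrac14\delta(X,Y))$, together with $\mu_i\le\beta_1(1+\delta)^{1/2}$, is bounded by $2q\beta_1^{2q-2}(1+\delta)^{q-1}\cdot\beta_1^2\delta(X,Y)(1+\tfrac14\delta)$; for $\delta<1/10$ and $q\ge2$ one checks $(1+\delta)^{q-1}(1+\tfrac14\delta)\le$ something that is absorbed, but to be safe one should either restrict $q$ further or simply state the bound with an extra universal constant. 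In the write-up I would present the trace bound as the clean estimate and note that the factor $2q\beta_1^{2q}\delta(X,Y)$ holds once $\delta(X,Y)$ is small, exactly as the hypothesis $\delta(X,Y)<1/10$ allows.
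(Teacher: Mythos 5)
Your starting point---the rank-one identity $\mathcal Q(B)^2=\mathcal Q(\tilde B)^2+\zeta\otimes\zeta$ with $|\zeta|^2\le s(\tilde B)^2\delta(X,Y)(1+\tfrac14\delta(X,Y))$---is exactly the paper's, but there are two genuine problems in how you close the estimate.

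First, you invoke operator monotonicity of $t\mapsto t^q$ for $q\ge1$ to conclude $\mathcal Q(B)^{2q}\ge\mathcal Q(\tilde B)^{2q}$ ``directly.'' By L\"owner--Heinz, $t\mapsto t^q$ is operator monotone only for $0\le q\le1$; for $q\ge2$ it already fails for $2\times2$ matrices differing by a rank-one perturbation (e.g.\ $A=B+e_1\otimes e_1$ with $B$ the all-ones matrix gives $A^2-B^2$ with negative determinant). So neither your non-negativity claim nor the subsequent step ``largest eigenvalue $\le$ trace''---which requires positive semidefiniteness---is justified as written, and your ``cleaner'' second route does not stand on its own. (To be fair, the paper also asserts the non-negativity without argument; but the applications in Propositions~\ref{propineq6} and~\ref{propine8} only use the operator-norm bound.) For the upper bound the paper avoids any operator-monotonicity statement by dominating the perturbation by a multiple of the identity: $\zeta\otimes\zeta\le 2s(B)^2\delta(X,Y)\,\mathrm{Id}$, so one compares with $\mathcal Q(B)^{2q}-(\mathcal Q(B)^2-2s(B)^2\delta(X,Y)\,\mathrm{Id})^q$, which commutes with $\mathcal Q(B)^2$ and diagonalizes in its eigenbasis; everything then reduces to the scalar inequality $x^q-(x-a)^q\le qax^{q-1}$ evaluated at the eigenvalues $\beta_j^2$ of $\mathcal Q(B)^2$.

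Second, you never actually land on the stated constant $2q\beta_1^{2q}\delta(X,Y)$: both of your routes end with an unabsorbed factor $(1+\delta)^{q-1}$, which, as you yourself observe, is not uniformly bounded in $q$, and you hedge with ``an extra universal constant.'' The trouble comes from your reading of $\beta_1$: in this lemma (see its use in Proposition~\ref{propineq6} and the paper's own proof) $\beta_1=s(B)$ is the top singular value of the \emph{projected} matrix $B=\tilde B_X$, not of $\tilde B$. Since $s(\tilde B)\le s(B)$ by Lemma~\ref{compprojnorm}, one has $\max(\|\mathcal Q(\tilde B)^2\|,\|\mathcal Q(B)^2\|)=\beta_1^2$ exactly and $|\zeta|^2\le\beta_1^2\delta(X,Y)(1+\tfrac14\delta(X,Y))$; with that reading your first route closes immediately with constant $q(1+\tfrac14\delta)\le 2q$ and no $(1+\delta)^{q-1}$ loss. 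So the upper bound is repairable along the lines you sketch, but as submitted the proof neither establishes the non-negativity nor the claimed constant.
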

\begin{proof} 
Recall from (\ref{mathcalQproj}), $\mathcal Q  (\tilde B)^2 = \mathcal Q(B)^2 - (\tilde B,X)^\sharp \otimes (\tilde B,X)^\sharp.$
 We use (\ref{strineq1}) to estimate
 \begin{equation}\label{normtilBX}
| (\tilde B,X)^\sharp |  
         \leq s(\tilde B)(\delta(X,Y)(1+ 1/4\delta(X,Y)))^{1/2}.
\end{equation}
We rewrite  
\[
    (\tilde B,X)^\sharp \otimes (\tilde B,X)^\sharp = s(B)^2 \delta(X,Y) C
\]
where $C$ is a rank 1 symmetric matrix whose entries are all less than a number slightly larger than 1 (recall $\delta(X,Y)$ is small).

 We want to compute the operator norm of
$\mathcal Q(B)^{2q} - \mathcal   Q(\tilde B)^{2q}.$
As symmetric matrices, 
\begin{eqnarray*}
 0 \leq \mathcal  Q(B)^{2q} -\mathcal   Q(\tilde B)^{2q} &=&
   \mathcal  Q(B)^{2q} - (\mathcal   Q(B)^2 - s(B)^2\delta(X,Y)C)^q \\
   &\leq &
   (\mathcal   Q(B)^{2q} - (\mathcal   Q(B)^2 - 2s(B)^2\delta(X,Y)Id)^q.
\end{eqnarray*}
The eigenvalues of the larger matrix are
\[ 
       \beta_j^{2q} - (\beta_j^2 - 2\beta_1^2\delta(X,Y))^q \leq
       2q \beta_1^{2q} \delta(X,Y).  
       \]
        This means the smaller matrix must have a smaller operator norm. This gives the result.
\end{proof}
                                                                                 
\begin{proposition}\label{propineq6}  Let $\tilde B_Y = \tilde B$ and $\tilde B_X = B.$ Then,
\[
|(S_{p-1}(B) - S_{p-1}(\tilde B), X - Y)^\sharp| \leq
     2(p-1)s(B)^{p-1} \delta(X,Y)^{3/2}.
     \]
\end{proposition}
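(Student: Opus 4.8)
The plan is to estimate $|(S_{p-1}(B) - S_{p-1}(\tilde B), X - Y)^\sharp|$ by splitting it into a piece that measures the difference of the two maps $S_{p-1}(B)$ and $S_{p-1}(\tilde B)$ paired against $X-Y$, and a piece that comes from the projection. First I would observe that $B = \tilde B_X = \tilde B + (\tilde B, X-Y)^\sharp X$, so that $B - \tilde B = (\tilde B, X-Y)^\sharp X$, a term of size $O(\delta(X,Y)^{1/2})$ by \eqref{normtilBX} (or \eqref{strineq1}). Using this, the plan is to write $(S_{p-1}(B) - S_{p-1}(\tilde B), X - Y)^\sharp$ and first replace, where possible, $S_{p-1}(\tilde B)$ pairings by $S_{p-1}(B)$ pairings so that the leading behaviour is controlled by known quantities. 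Diagonalizing both $B$ and $\tilde B$ (as in Proposition~\ref{propineq1}) in compatible orthonormal bases, the inner product $(S_{p-1}(B) - S_{p-1}(\tilde B), X-Y)^\sharp$ becomes a sum over the singular directions, and the relevant bookkeeping is to track which terms are $O(\delta^{1/2})$ (linear in $X-Y$) versus $O(\delta^{3/2})$.

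The key point is that the \emph{linear-in-$(X-Y)$} part vanishes. Indeed, both $B$ and $\tilde B$ agree to first order in $X-Y$ in the sense that $B = \tilde B + (\tilde B, X-Y)^\sharp X$, and $S_{p-1}$ is a smooth function of its argument away from zero, so $S_{p-1}(B) - S_{p-1}(\tilde B)$ is itself $O(\delta^{1/2})$ in operator norm. Pairing against $X - Y$ naively would then only give $O(\delta)$. To get $O(\delta^{3/2})$ I would use that the direction of $B - \tilde B$ is along $X$, which is \emph{normal} to $T_Y N$, hence the contribution of $S_{p-1}(\tilde B)$ (which lives in $T_Y N \otimes T^*M$) paired with $X$ in the relevant slot produces an extra factor $(\tilde B_j, X)^\sharp = (\tilde B_j, X - Y)^\sharp$, which is again $O(\delta^{1/2})$ by \eqref{strineq1}. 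Concretely: expand $Q(B)^{p-2}B - Q(\tilde B)^{p-2}\tilde B$ using Lemma~\ref{props5} (applied with $q = (p-2)/2$, giving that $\mathcal Q(B)^{p-2} - \mathcal Q(\tilde B)^{p-2}$ has eigenvalues bounded by $(p-2)\beta_1^{p-2}\delta(X,Y)$), multiply by $B$ and by $(B - \tilde B) = (\tilde B, X-Y)^\sharp X$, and observe that the cross-terms pairing the ``tangent at $Y$'' part with $X-Y$ carry one factor $\delta^{1/2}$ from $B - \tilde B$ and, if present, a second from $(\tilde B_j, X-Y)^\sharp$; the terms that are genuinely linear cancel because $(\tilde B, \tilde B)^\sharp$-type pairings against the purely normal vector $X$ vanish up to $O(\delta)$ corrections. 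Summing, every surviving term is bounded by (constant)$\cdot(p-1)s(B)^{p-1}\delta(X,Y)^{3/2}$, with the constant at most $2$ once $\delta(X,Y) < 1/10$ is used to absorb the factors slightly larger than $1$ appearing in $C$ and in $(1 + \tfrac14\delta)$.

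The main obstacle is the careful accounting of powers of $\delta(X,Y)$: since $p$ is large, one cannot afford to be sloppy with constants that grow in $p$, and one must verify that after pairing against $X-Y$ the genuinely $O(\delta)$ (linear) contributions really do cancel rather than merely being small. The cancellation is a consequence of $X$ being orthogonal to $T_Y N$ and of $\tilde B$ taking values in $T_Y N$, so that $(\tilde B(a), X)^\sharp = (\tilde B(a), X - Y)^\sharp$ is itself first order; isolating this and combining it with the first-order smallness of $B - \tilde B$ is what upgrades $O(\delta)$ to $O(\delta^{3/2})$. I would organize the computation exactly as in the proofs of Propositions~\ref{propineq1} and~\ref{propineq2}: diagonalize, write the sum explicitly, and then bound term by term using Lemma~\ref{props5}, \eqref{strineq1}, and \eqref{shatl1}, reserving the $\delta(X,Y) < 1/10$ hypothesis to keep the final constant at $2(p-1)$.
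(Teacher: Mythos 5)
Your proposal is correct and follows essentially the same route as the paper: the paper splits $X-Y$ (and $S_{p-1}(\tilde B)$) into tangential and normal parts at $X$, bounds the normal contribution by $|(S_{p-1}(\tilde B),X)^\sharp|\,|(X-Y,X)^\sharp|\lesssim s(B)^{p-1}\delta^{1/2}\cdot\tfrac12\delta$ using exactly your observation that $(\tilde B_j,X)^\sharp=(\tilde B_j,X-Y)^\sharp=O(\delta^{1/2})$, and bounds the tangential part by Lemma~\ref{props5} (operator norm of $\mathcal Q(B)^{p-2}-\mathcal Q(\tilde B)^{p-2}$ at most $(p-2)s(B)^{p-2}\delta$) times $|B|\,|(X-Y)_X|$. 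Your "cancellation of the linear terms" is precisely this projection decomposition, and the full diagonalization you propose, while workable, is not needed for this estimate.
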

\begin{proof}
\begin{eqnarray*}
\lefteqn{\left |(S_{p-1}(B) - S_{p-1}(\tilde B), X - Y)^\sharp \right |}  \\
&\leq & \left |(S_{p-1}(B) - S_{p-1}(\tilde B)_X, (X - Y)_X)^\sharp \right |\\
                                       &+& \left | (S_{p-1}(\tilde B),X)^\sharp (X-Y,X)^\sharp\right |.
  \end{eqnarray*}
The second factor is easy to estimate, as $(X-Y,X)^\sharp = 1/2\delta(X,Y)$ and
\begin{eqnarray*}
|(S_{p-1}(\tilde B),X)^\sharp | &=& |\mathcal   Q(\tilde B)^{p-2}(\tilde B,X)^\sharp| \\
 &\leq& s(\tilde B)^{p-2}|(\tilde B,X)^\sharp|\\
 &\leq& s(\tilde B)^{p-1}(\delta(X,Y)(1 + 1/4\delta(X,Y))^{1/2}.
\end{eqnarray*}
In the last inequality we used (\ref{normtilBX}).
Recall $s(\tilde B) \leq s(B)$, so we may replace $s(\tilde B)$ by $s(B)$ and absorb the $(1 + 1/4\delta(X,Y))^{1/2}$ in the constant. Next,
\[
 (S_{p-1}(B) - S_{p-1}(\tilde B)_X, (X - Y)_X)^\sharp =
       (\mathcal Q(B)^{p-2}- \mathcal Q(\tilde B)^{p-2})(B,(X-Y)_X)^\sharp.
\]
From Lemma~\ref{props5}, we can estimate the operator norm of
$\mathcal Q(B)^{p-2}- \mathcal   Q(\tilde B)^{p-2}$ by $(p-2)s(B)^{p-2}\delta(X,Y)$,
 the norm of $B$ by $s(B)$ and the norm of $(X-Y)_X$ by $\delta(X,Y)^{1/2}(1+ 1/4\delta(X,Y))^{1/2}.$
\end{proof}

\begin{proposition}\label{propine7} Let $\tilde B_Y = \tilde B$ and $\tilde B_X = B.$ Then, 
\[
|S_{p/2}(\tilde B)_X - S_{p/2}(B)| \leq p\delta(X,Y) s(B)^{p/2}.
\]
\end{proposition}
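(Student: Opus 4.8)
The plan is to estimate $|S_{p/2}(\tilde B)_X - S_{p/2}(B)|$ by splitting it into a ``rotation/eigenvalue'' part and a ``projection error'' part, exactly in the spirit of the preceding propositions. Recall $B = \tilde B_X$ and $\tilde B_Y = \tilde B$, so the two maps differ only through the change of basepoint from $Y$ to $X$. First I would write $S_{p/2}(\tilde B)_X = (\mathcal{Q}(\tilde B)^{p/2-1}\tilde B)_X$, using the transpose formalism $Q(A)^{p/2-1}A = A\mathcal{Q}(A)^{p/2-1}$ from Section~\ref{sect1}, so that the comparison becomes $\tilde B\,\mathcal{Q}(\tilde B)^{p/2-1}$ projected to $T_X\HH$ versus $B\,\mathcal{Q}(B)^{p/2-1}$. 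Since $B = \tilde B + (\tilde B, X-Y)^\sharp X$ and projection onto $T_X\HH$ differs from the identity by the rank-one term $(\,\cdot\,,X)^\sharp X$, the difference $S_{p/2}(\tilde B)_X - S_{p/2}(B)$ is controlled by (a) the difference $\mathcal{Q}(\tilde B)^{p/2-1} - \mathcal{Q}(B)^{p/2-1}$ acting on $\tilde B$, and (b) terms involving $(\tilde B, X-Y)^\sharp$.

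For part (a) I would invoke Lemma~\ref{props5} with $q = p/4$ (or rather the analogous eigenvalue estimate for the exponent $p/2-1$ in place of $2q$): the eigenvalues of $\mathcal{Q}(B)^{p-2'} - \mathcal{Q}(\tilde B)^{p-2'}$ for the relevant exponent are non-negative and bounded by a constant times $p\,\beta_1^{\,\cdot}\,\delta(X,Y)$, where $\beta_1 = s(\tilde B) \le s(B)$. Applying this with exponent $p/2-1$ gives operator norm $\le (p/2-1)s(B)^{p/2-1}\delta(X,Y)$ for $\mathcal{Q}(B)^{p/2-1} - \mathcal{Q}(\tilde B)^{p/2-1}$; composing with $\tilde B$ (norm $\le s(B)$) produces a term bounded by $(p/2-1)s(B)^{p/2}\delta(X,Y)$, which is dominated by $p\,\delta(X,Y)\,s(B)^{p/2}$. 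For part (b), the projection error contributes a term of the form $(\,S_{p/2}(\tilde B),X\,)^\sharp X$-type, and using the estimate $|(\tilde B,X)^\sharp| \le s(\tilde B)(\delta(X,Y)(1+\tfrac14\delta(X,Y)))^{1/2}$ from~(\ref{normtilBX}) together with $\|\mathcal{Q}(\tilde B)^{p/2-1}\| \le s(\tilde B)^{p/2-1}$ gives a bound of order $s(B)^{p/2}\delta(X,Y)^{1/2}$; since we assume $\delta(X,Y) < 1/10$ is small, $\delta(X,Y)^{1/2}$ is \emph{larger} than $\delta(X,Y)$, so here I must be careful — but in fact the relevant projection term that survives is the one that also carries a factor $(X-Y,X)^\sharp = \tfrac12\delta(X,Y)$ or arises from $\mathcal{Q}(B)^2 - \mathcal{Q}(\tilde B)^2$, both genuinely of order $\delta(X,Y)$, so after recombining, every surviving term is $O(\delta(X,Y))$ and the stated bound $p\,\delta(X,Y)\,s(B)^{p/2}$ follows by absorbing the $(1+\tfrac14\delta(X,Y))^{1/2}$ and other $O(1)$ factors into the constant.

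The main obstacle I anticipate is bookkeeping the factor of $p$ correctly while keeping the power of $s(B)$ at exactly $p/2$ and the power of $\delta(X,Y)$ at exactly $1$: one must resist the temptation to use the crude bound $|\tilde B_X - \tilde B| \lesssim \delta(X,Y)^{1/2}|\tilde B|$ pointwise, since that would give the wrong (square-root) power; instead the near-cancellation between $S_{p/2}(\tilde B)_X$ and $S_{p/2}(B)$ — which share the same leading term — must be exploited, just as in Proposition~\ref{propineq6}. Concretely, I would organize the computation so that the difference is written as $\bigl(\mathcal{Q}(B)^{p/2-1} - \mathcal{Q}(\tilde B)^{p/2-1}\bigr)\tilde B$ plus $\mathcal{Q}(B)^{p/2-1}\bigl(B - \tilde B_X\bigr)$ — but the second bracket vanishes identically since $B = \tilde B_X$ — plus the genuine projection discrepancy coming from writing $\tilde B_X$ in terms of $\tilde B$; tracking that the remaining discrepancy is exactly the rank-one $\delta(X,Y)$-term and not a $\delta(X,Y)^{1/2}$-term is the crux. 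Once that structural decomposition is in place, Lemma~\ref{props5}, the transpose identity, and~(\ref{strineq1})/(\ref{normtilBX}) finish the estimate routinely.
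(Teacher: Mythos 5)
Your proposal is correct and is essentially the paper's argument: the paper's entire proof is the remark that this ``follows in the same fashion as the first estimate of the previous proposition,'' i.e., use the transpose formalism and $\tilde B_X = B$ to write $S_{p/2}(\tilde B)_X - S_{p/2}(B) = B\bigl(\mathcal Q(\tilde B)^{p/2-1} - \mathcal Q(B)^{p/2-1}\bigr)$ and then apply the eigenvalue bound of Lemma~\ref{props5}. The only streamlining available is to observe that the projection $(\cdot)_X$ acts on the target and hence commutes with right-composition by $\mathcal Q(\tilde B)^{p/2-1}$, so the ``projection discrepancy'' you worry about in part (b) is either absent entirely or, in your grouping, a rank-one correction of order $\delta(X,Y)^{3/2}$ that is dominated by the main term.
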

This follows in the same fashion as in the first estimate of the previous proposition.

\begin{proposition}\label{propine8}  Let 
\[
\tilde B_Y =\tilde B,  \ \tilde B_X = B, \  A_X = A
\]
and $p \geq 4.$ Then,
\[
  |(S_{p-1}(\tilde B) - S_{p-1}(B); B - A )^\sharp| \leq 
         2(p-2)C s(B)^{p-2}\delta(X,Y) |S_{p/2}(B)-S_{p/2}(A)|^{4/p}.
 \]
 Here $C$ is a combinatorial constant independent of $p$ computed using the number of terms in each summand.
\end{proposition}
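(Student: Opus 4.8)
The plan is to isolate the part of $S_{p-1}(\tilde B)-S_{p-1}(B)$ that is genuinely tested against $B-A$, to symmetrize, and then to reduce to two elementary scalar convexity inequalities. First I would use the transpose identity $S_{p-1}(W)=Q(W)^{p-2}W=W\mathcal Q(W)^{p-2}$ to write
\[
S_{p-1}(\tilde B)-S_{p-1}(B)=(\tilde B-B)\,\mathcal Q(\tilde B)^{p-2}+B\,M,\qquad M:=\mathcal Q(\tilde B)^{p-2}-\mathcal Q(B)^{p-2}.
\]
Since $B=\tilde B_X=\tilde B+(\tilde B,X-Y)^\sharp X$, the difference $\tilde B-B=-(\tilde B,X-Y)^\sharp X$ takes values in $\mathrm{span}(X)$, and hence so does $(\tilde B-B)\mathcal Q(\tilde B)^{p-2}$; because $A=A_X$ and $B=\tilde B_X$ both take values in $T_X\HH=X^\perp$, that summand drops out of the pairing, leaving $(S_{p-1}(\tilde B)-S_{p-1}(B);B-A)^\sharp=(B\,M;B-A)^\sharp=Tr(M\,B^\sharp(B-A))$, where $B^\sharp$ on $T_X\HH$ is just the transpose.

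Next, because $M$ is a symmetric endomorphism of $V_1$ I may replace $B^\sharp(B-A)$ by its symmetric part, and expanding $(B-A)^\sharp(B-A)$ shows the latter equals $\tfrac12\big((\mathcal Q(B)^2-\mathcal Q(A)^2)+\mathcal Q(B-A)^2\big)$. Therefore, using $|Tr(MN)|\le|M|_{sv^\infty}|N|_{sv^1}$ (Proposition~\ref{shat1}) and $|\mathcal Q(B-A)^2|_{sv^1}=Tr((B-A)^\sharp(B-A))=|B-A|_2^2$,
\[
\big|(S_{p-1}(\tilde B)-S_{p-1}(B);B-A)^\sharp\big|\le\tfrac12\,|M|_{sv^\infty}\big(|\mathcal Q(B)^2-\mathcal Q(A)^2|_{sv^1}+|B-A|_2^2\big).
\]
By Lemma~\ref{props5} applied with $2q=p-2$ (admissible since $p\ge4$, and $\delta(X,Y)<1/10$ is the standing hypothesis), $|M|_{sv^\infty}\le(p-2)\,s(B)^{p-2}\delta(X,Y)$; this produces the factor $(p-2)s(B)^{p-2}\delta(X,Y)$.

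It then remains to prove the two scale-free bounds
\[
|\mathcal Q(B)^2-\mathcal Q(A)^2|_{sv^1}\le C\,|S_{p/2}(B)-S_{p/2}(A)|^{4/p},\qquad |B-A|_2^2\le C\,|S_{p/2}(B)-S_{p/2}(A)|^{4/p},
\]
with $C$ combinatorial. I would obtain both from the diagonalization device of Proposition~\ref{propineq1}: writing $A=\sum_j\alpha_j\,a_j\otimes A_j$ and $B=\sum_j\beta_j\,b_j\otimes B_j$ and expanding in these bases turns each side into a finite sum of scalar expressions in the singular values $\alpha_j,\beta_j$, with weights built from the domain rotation angle $\theta$ and the target rotation angle $\phi$. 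The two scalar facts that power the reduction are, for $a,b\ge0$: the subadditivity of $t\mapsto t^{4/p}$ --- valid precisely because $p\ge4$ --- which gives $|b^2-a^2|=\big|(b^{p/2})^{4/p}-(a^{p/2})^{4/p}\big|\le|b^{p/2}-a^{p/2}|^{4/p}$; and the superadditivity of $t\mapsto t^{p/2}$, which gives $|b-a|^{p/2}\le|b^{p/2}-a^{p/2}|=|S_{p/2}(b)-S_{p/2}(a)|$. The cross-index and sign contributions are controlled exactly as in the proofs of Propositions~\ref{propineq1} and~\ref{propineq2}, via $(\alpha^2+\beta^2)^{p/2}\le2^{p/2-1}(\alpha^p+\beta^p)$ and $(\alpha+\beta)^p\le2^{p-1}(\alpha^p+\beta^p)$; the $p$-dependent factors these produce enter the final constant only through the exponent $4/p$ (respectively $2/p$), so they collapse to a bound depending solely on the number of summands, which is why $C$ is independent of $p$. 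Combining the three steps gives the estimate with $2(p-2)C\,s(B)^{p-2}\delta(X,Y)$ in front.

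The genuinely delicate point, I expect, is this last step: one must align the weights $\cos^2\theta,\sin^2\theta$ that occur in the expansion of $|\mathcal Q(B)^2-\mathcal Q(A)^2|_2^2$ with the weights $E_1=(-1)^n\cos\theta\cos\phi$, $E_2=(-1)^m\sin\theta\sin\phi$, $1-E_1-E_2$ that occur in $|S_{p/2}(B)-S_{p/2}(A)|^2$, and verify that whenever $1-E_1-E_2$ is small (so the ``aligned'' terms are not available) the needed difference of $p/2$-powers is still supplied by the $E_1$- or $E_2$-term. This is precisely where the interplay of the two independent rotation angles is used, and it is the only part of the argument that does not follow routinely from the two scalar inequalities above.
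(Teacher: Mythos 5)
Your reduction is sound and tracks the paper's own proof quite closely: both arguments use $(B-A)_X=B-A$ to discard the $\mathrm{span}(X)$-valued part of $S_{p-1}(\tilde B)-S_{p-1}(B)$, both invoke Lemma~\ref{props5} to extract the factor $(p-2)s(B)^{p-2}\delta(X,Y)$ from the operator norm of $\mathcal Q(\tilde B)^{p-2}-\mathcal Q(B)^{p-2}$, and both must then bound the entries of a $2\times 2$ matrix built from $A$ and $B$ by a constant times $|S_{p/2}(B)-S_{p/2}(A)|^{4/p}$. Your symmetrization of $B^\sharp(B-A)$ into $\tfrac12\bigl(\mathcal Q(B)^2-\mathcal Q(A)^2+\mathcal Q(B-A)^2\bigr)$ is a legitimate repackaging of the paper's matrix $(B,B-A)^\sharp$, and it has the cosmetic advantage of removing the mixed weights $E_3=\sin\theta\cos\phi$, $E_4=\cos\theta\sin\phi$ from the second summand, whose $sv^1$ norm is just $|B-A|_2^2$ and expands in $E_1,E_2$ alone.

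The difficulty is that the proof stops exactly where the real work begins. The two displayed ``scale-free bounds'' are asserted, not proved, and they do not follow routinely from the two scalar inequalities you quote: the entries of $\mathcal Q(B)^2-\mathcal Q(A)^2$ (in particular the off-diagonal entry $(\alpha_1^2-\alpha_2^2)\cos\theta\sin\theta$) depend only on the domain angle $\theta$, whereas $Z=|S_{p/2}(B)-S_{p/2}(A)|^2$ is expressed through $E_1=\pm\cos\theta\cos\phi$ and $E_2=\pm\sin\theta\sin\phi$; one must show that whenever $1-E_1-E_2$ is small the $E_1$- and $E_2$-terms of $Z$ still supply the needed differences of $p/2$-powers, and one must separately dispose of the sign cases where $E_1$ or $E_2$ is negative. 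This case analysis is precisely the content of the second half of the paper's proof (the estimate $(E_3-E_4)^2\le 2(1-E_1-E_2)$, the dichotomy between $|\sin\theta|$ and $|\sin\phi|$, the cases $E_2\ge 0$ versus $E_2<0$), and it occupies most of its length; your symmetrization changes its packaging but not its substance, since a $\theta$-only quantity must still be controlled by the $(\theta,\phi)$-dependent $Z$. You have correctly identified this as the delicate point, but as written the argument has a genuine gap at the proposition's crux: the comparison estimates against $Z^{2/p}$ are exactly what needs to be proved.
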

\begin{proof}  Since $(B-A)_X = B-A$ we can compute
\begin{eqnarray*}
        \lefteqn{|(S_{p-1}(\tilde B) - S_{p-1}(B); B - A )^\sharp|}\\
         &=&| ((\mathcal Q(\tilde B)^{p-2} - \mathcal Q(B)^{p-2})B; B-A)^\sharp \\
         &=& |Tr (\mathcal Q(\tilde B)^{p-2} - \mathcal Q(B)^{p-2}) (B, B-A)^\sharp |\\
        &\leq & 8 \mbox{ max eigenvalue of} \ (Q(\tilde B)^{p-2}-Q(B)^{p-2}) \\
            &\times &  \mbox{largest entry in the $2 \times 2$  matrix} \ (B, B-A)^\sharp.
\end{eqnarray*}
We can take the largest entry in any orthogonal basis (we use the $b_j$).
Here the $2 \times 2$ matrix 
\[
   (B, B-A)^\sharp = \sum_j \beta_j^2 b_j \otimes b_j - \sum_{j,k} 
    \beta_j\alpha_k (B_j, A_k) (b_j \otimes a_k).
\]
We have already estimated the eigenvalues of $Q(\tilde B)^{p-2}-Q(B)^{p-2}$ in Lemma~\ref{props5}. We need only estimate the terms in 
$(B, B-A)^\sharp. $ We recall that $B = \sum_j  \beta_j b_j \otimes B_j$ and $A = \sum_k  \alpha_k a_k \otimes A_k.$  We will use the calculations in the proof of Proposition~\ref{propineq1}, except we now use the bases $b_j $ of $T_xM$ and $B_j$ of $T_XN$ to expand in.  This means that 
\begin{eqnarray*} 
          a_1 &=& \cos\theta b_1 - \sin\theta b_2  \ \ a_2 = \sin \theta b_1 + \cos \theta b_2 \\
          A_1 &=& \cos \phi B_1 - \sin \phi B_2 \ \ A_2 = \sin \phi B_1 + \cos\phi B_2.
\end{eqnarray*} 
We get  that the matrix
$(B, B-A)^\sharp$ is
\begin{eqnarray*} 
        &&  \beta_1(\beta_1 -\alpha_1 E_1 - \alpha_2 E_2)( b_1 \otimes b_1)
       + \beta_2(\beta_2 - \alpha_2 E_1 - \alpha_1 E_2) (b_2 \otimes b_2)\\
          &+& \beta_1 (\alpha_1 E_3 - \alpha_2 E_4) (b_1\otimes  b_2)
          +\beta_2(\alpha_1 E_4-\alpha_2 E_3) (b_2 \otimes b_1).
\end{eqnarray*} 
Here, as before $E_1 = \cos\theta \cos\phi$ and $E_2 = \sin\theta\sin\phi.$ New in this computation is $E_3 = \sin\theta \cos\phi$ and  $E_4 = \cos\theta\sin\phi.$

For convenience, let $Z = |S_{p/2}(B)-S_{p/2}(A)|^2.$ We need to bound all the terms above by a constant times $Z^{2/p}.$ As before, some complications arise if the signs of the cosine and sines are not as expected.  Without changing signs, a computation similar to (\ref{Esecondterm1}) implies 
 \begin{eqnarray}\label{Esecondterm1Y}
      \lefteqn{\left |S_{p/2}(A)-S_{p/2}(B) \right|^2}\nonumber \\
      &=&\sum_j ( \alpha_j^p + \beta_j^p )(1 - E_1 - E_2)
    + E_1 (\alpha_j^{p/2} -  \beta_j^{p/2})^2  \\
&+&E_2 (\alpha_j^{p/2} -  \beta_{j'}^{p/2})^2. \nonumber
 \end{eqnarray}
If $E_1$ and $E_2$ are negative
\[
      Z \geq \sum (\alpha_j^p + \beta_j^p).
      \]
In this case, we can estimate all terms easily.   

In the remaining cases  $E_1$ and $E_2$ are interchangeable by reversing the roles of $b_1$ with $b_2$ and $B_1$ with $B_2$. We may thus assume $E_1 \geq E_2.$  Also
\[
E_1 + |E_2|  <1.
\] 
%
In the proof of Propositions~\ref{propineq1} and~\ref{propineq2}, we already handled terms that look like the coefficients of $b_j \otimes b_j. $ The only new ingredient we need here is that  $|x (x-y)| \leq |x^{p/2} - y^{p/2}|^{4/p}$ which can be easily checked. 

To handle the off diagonal terms, we first decide which of $|\sin\theta|$ and $|\sin\phi|$ is smaller. Suppose it is $|\sin\theta|$. Then $|E_3|^2 \leq |E_2|. $
We write the coefficient of $ b_1 \otimes b_2$ as
\[
      \beta_1 (\alpha_1 E_3  - \alpha_2 E_4) = 
       \beta_1 (\alpha_1 - \alpha_2)E_3 +\beta_1\alpha_2(E_3 - E_4).
       \]
But 
\begin{eqnarray*}
(E_3 - E_4)^2 &=& \sin(\theta - \phi)^2 = 1 - cos(\theta - \phi)^2 \leq 
        2(1 - cos(\phi - \theta)) \\
        &=&2(1 - E_1 - E_2).  
\end{eqnarray*}
Hence, if  $E_2 \geq 0$ (recall $p\geq 4$)
\begin{eqnarray*}
       |\beta_1 \alpha_1 (E_3 - E_4)|^{p/2}
        &\leq& \beta_1^{p/2} \alpha_1^{p/2} |(E_3 - E_4)|\\
        &\leq& \beta_1^{p/2} \alpha_1^{p/2} 2(1 - E_1 - E_2) \leq 2 Z.
 \end{eqnarray*}
 If $E_2 < 0$ we use instead
\begin{eqnarray*}
Z &\geq & 1/2 \sum_j   (\alpha_j^p+ \beta_j^p) (1 - E_1 -E_2) \\
&\geq& 1/2 \alpha_1^{p/2}\beta_1^{p/2} (E_3 -E_4)^2 \geq 1/2 |\beta_1 \alpha_1 (E_3 - E_4)|^{p/2}.
 \end{eqnarray*}
 For the proof of the first inequality we use $1+E_2>E_1$.

To estimate the other term, we write 
\[
|\beta_1(\alpha_1 - \alpha_2)E_3| 
       \leq \sum_k  \beta_1 |(\beta_1 - \alpha_k)E_3|.
 \]      
 If $E_2 \geq 0$,
      \[
         Z\geq \sum_{j,k} (\beta_j^{p/2} - \alpha_k^{p/2})^2 E_2.
         \]
Recall that we chose $E_2 \leq E_1$ for exactly this purpose. Since $|E_3| ^2 \leq |E_2|$ and we already met the estimate $|x (x-y)| \leq |x^{p/2} - y^{p/2}|^{4/p},$ the bound on this term is complete. The case when $E_2<0$ is easier, since in this case
\[ 
Z \geq \sum (\alpha_j^p + \beta_j^p)(1 - E_1) \geq 
         \sum   (\alpha_j^p + \beta_j^p)  |E_2|.
\]
The coefficient of $b_2 \otimes b_1$ is estimated in the same way. We reverse the roles of $E_3$ and $E_4$ in the case that
 $|\sin\theta| \geq  |\sin\phi|.$
\end{proof}

\subsection{The regularity theorem}
In this section we prove that if $u=u_p$ satisfies the $J_p$-Euler-Lagrange equations, $D(Q(du)^{p/2 - 1}du)=DS_{p/2}(du)$ is in $L^2.$
We find the notation $Q(du)^{q-1}du = S_q(du)$ and $\delta(u,w) = (u-w, u-w)^\sharp$ useful. 
The a priori estimate predicting this is easily obtained from a Bochner formula, but to obtain the result  for a solution only known to be in $W^{1,p}$, we must take difference quotients.  Because we are mapping into  a locally symmetric space, we can locally consider differences obtained using the solution $u$, and the translated solutions $w_t = g_t^*u$, or $w_t(x) = u(g_tx)$ for $g_t = \exp ta$, for a an arbitrary elements in the Lie algebra.   If we can obtain bounds in $L^2$ on the difference quotients,  $1/t \left(S_{p/2}(dw_t)_u - S_{p/2}(du)\right)$, then the covariant derivative of $S_{p/2}(du)$ exists in $L^2.$

Assume $w$ and $u$ are solutions to the $J_p$-Euler-Lagrange equations in a ball, and $\phi$ is a cut-off function with support in the ball $\Omega$ (and equal to 1 on a smaller ball $\Omega'$).   Use the Euler-Lagrange equations for $u$ to get:
\[
<S_{p-1}(du), d(\phi^2(u - w + (u-w,u)^\sharp u))> = 0.  
\]
Rearrange to get
\[
 <S_{p-1}(du),d(\phi^2(u-w))> = 
      -1/2 <\phi^2 (S_{p-1}(du),du)^\sharp \delta(u,w)>.
\]
Write down the same equation with $u$ and $w$ interchanged and add the two equations.  This gives
\[
           <S_{p-1}(dw) - S_{p-1}(du),d(\phi^2(w-u))> = 
           -1/2<\phi^2 Tr(Q(du)^p + Q(dw)^p) \delta(u,w)>.
  \]  
Thus,
\begin{eqnarray*}
    \lefteqn{<\phi^2(S_{p-1}(dw) - S_{p-1}(du)),d(w-u))>}\\
     &=&-< d(\phi^2)(S_{p-1}(dw) - S_{p-1}(du)), w-u>\\ 
     &-& 1/2<\phi^2\delta(u,w) (Tr Q(du)^p + Tr Q(dw)^p)>.
 \end{eqnarray*}
Adding and rearranging some terms, we get the next equation. 

For the purposes of the proof of the next proposition, we label each term in the equation by a Roman numeral. Note that the inner product in $R^{2,1}$ is not positive definite, so to make estimates, we need to project terms into the tangent space at $N$ of either $u$ or $w.$ This explains the elaborate rearrangement of terms:

 \begin{eqnarray*} 
 \lefteqn{<\phi^2(S_{p-1}(dw_u) - S_{p-1}(du)), dw - du> \ (I)}\\
 &=&<\phi^2(S_{p-1}(dw_u) - S_{p-1}(dw)), dw - du>\\
  &+&<\phi^2(S_{p-1}(dw) - S_{p-1}(du)), dw - du> \\
  &=&
    <\phi^2(S_{p-1}(dw_u) - S_{p-1}(dw)), dw_u - du> \ (II) \\
    &+& <\phi^2(S_{p-1}(dw) - S_{p-1}(dw_u)), (dw - du,u)^\sharp u> \ (III)\\
    &- 2&<\phi d\phi(S_{p-1}(dw_u) - S_{p-1}(du)),(w-u)_u> \ (IV)\\
   &-& <d(\phi^2) (S_{p-1}(dw) - S_{p-1}(dw_u)), w-u > \ (V)\\
&-1/2& (<\phi^2(\delta(u,w)(Tr Q(du)^p + TrQ(dw)^p)> \ (VI).
\end{eqnarray*}

\begin{proposition}\label{propine9}Assume $w$ and $u$ are solutions to the $J_p$-Euler-Lagrange equations in a ball, and $\phi$ is a cut-off function with support in the ball $\Omega$ (and equal to 1 on a smaller ball $\Omega'$). Then,
 \begin{eqnarray*}  
\lefteqn{1/(4p)<\phi^2 (S_{p/2}(dw_u) - S_{p/2}(du)), S_{p/2}(dw_u) - S_{p/2}(du)>_\Omega }\\
 &\leq& 
64p \max|d\phi|^2  <(s(du)^{p-2}+ s(dw)^{p-2})\delta (w,u)>_\Omega \ (VII)\\
&+ &
 C(p) \max |d(\phi^2)| <s(dw_u)^{p-1}\delta(w,u)^{3/2}>_\Omega \ (VIII)\\
&+& C(p)  <s(dw_u)^p (\phi \delta(w,u))^{p/p-2}>_\Omega \ (IX)\\
 &+&1/2 < \phi^2(Q(dw)^p - Q(du)^p)\delta(u,w)>_\Omega \ (X)\\
   &+&<\phi^2 Q(dw)^p\delta(w,u)^2>_\Omega  \ (XI).
   \end{eqnarray*}
\end{proposition}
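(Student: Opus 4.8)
The plan is to start from the labeled identity $(I) = (II) + (III) + (IV) + (V) + (VI)$ displayed just before the proposition, and to bound each of the six terms, producing on the left-hand side the quantity we want to control, namely $\frac{1}{4p}\langle \phi^2(S_{p/2}(dw_u)-S_{p/2}(du)), S_{p/2}(dw_u)-S_{p/2}(du)\rangle_\Omega$. The first move is to observe that term $(I)$ is, up to the cut-off $\phi^2$, exactly the quantity whose integrand is estimated from below by Proposition~\ref{propineq1}: writing $A = du$ and $B = dw_u$ (both tangent at $u$), Proposition~\ref{propineq1} gives pointwise $(S_{p/2}(dw_u)-S_{p/2}(du); S_{p/2}(dw_u)-S_{p/2}(du))^\sharp \leq p\,(S_{p-1}(dw_u)-S_{p-1}(du), dw_u - du)^\sharp$. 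But $(I)$ is $\langle \phi^2(S_{p-1}(dw_u)-S_{p-1}(du)), dw - du\rangle$, with $dw$ rather than $dw_u$; the discrepancy $dw - dw_u$ is a normal vector (it equals $-(dw,u)^\sharp u$ type terms), and since $S_{p-1}(dw_u)-S_{p-1}(du)$ is tangent at $u$, pairing against the normal part vanishes, so $(I)$ equals $\langle \phi^2(S_{p-1}(dw_u)-S_{p-1}(du)), dw_u - du\rangle$ and hence $(I) \geq \frac{1}{p}\langle \phi^2(S_{p/2}(dw_u)-S_{p/2}(du)), S_{p/2}(dw_u)-S_{p/2}(du)\rangle_\Omega$. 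That yields the left side of the proposition (with the factor $\tfrac14$ to spare, absorbed when moving pieces of $(II)$ across).

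Next I would bound each right-hand term of the identity using the pointwise inequalities of the previous subsection, all applied with $Y = w(x)$, $X = u(x)$, $\tilde B = dw$ so that $\tilde B_X = dw_u$, and $A = du$. Term $(II)$, $\langle \phi^2(S_{p-1}(dw_u)-S_{p-1}(dw)), dw_u - du\rangle$, is precisely the pairing estimated in Proposition~\ref{propine8}, which bounds its integrand by $2(p-2)C\, s(dw)^{p-2}\delta(w,u)\,|S_{p/2}(dw_u)-S_{p/2}(du)|^{4/p}$; I would then apply Young's inequality $ab \leq \eps a^{p/(p-2)} + C_\eps b^{p/2}$ with $a$ the $\delta$-factor and $b = |S_{p/2}(dw_u)-S_{p/2}(du)|^{4/p}$ raised appropriately, so that $b^{p/2} = |S_{p/2}(dw_u)-S_{p/2}(du)|^2$ can be absorbed into the left-hand side (this is why the left coefficient is $\tfrac{1}{4p}$ and not $\tfrac1p$), while the remaining power of $\delta(w,u)$, together with $s(dw_u)^p$, becomes term $(IX)$ with its exponent $p/(p-2)$. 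Term $(III)$, involving $(dw - du, u)^\sharp u$, is controlled by Proposition~\ref{propineq6} together with $|(dw-du,u)^\sharp| = \tfrac12\delta(u,w)$-type bounds from Lemma~\ref{formdRw}, giving something of the shape $s(dw_u)^{p-1}\delta(w,u)^{5/2}$, dominated (since $\delta$ is small) by term $(VIII)$ or absorbed into $(XI)$. Terms $(IV)$ and $(V)$ carry the derivative of the cut-off: $(IV)$ is handled by Cauchy–Schwarz together with Proposition~\ref{propineq2} (which converts $|S_{p-1}(dw_u)-S_{p-1}(du)|$ into $\max(s)^{(p-2)/2}|S_{p/2}(dw_u)-S_{p/2}(du)|$) and $|(w-u)_u| \lesssim \delta(w,u)^{1/2}$, again using Young to absorb the $S_{p/2}$-difference into the left side at the cost of the term $64p\max|d\phi|^2\langle(s(du)^{p-2}+s(dw)^{p-2})\delta(w,u)\rangle_\Omega$, which is $(VII)$; $(V)$ is estimated by Proposition~\ref{propineq6} (bounding $|S_{p-1}(dw)-S_{p-1}(dw_u)|$ by $s(dw)^{p-1}\delta^{1/2}$ or so) times $|w-u|\lesssim\delta^{1/2}$, producing the $\max|d(\phi^2)|\langle s(dw_u)^{p-1}\delta(w,u)^{3/2}\rangle_\Omega$ term $(VIII)$. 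Finally $(VI)$, which is $-\tfrac12\langle\phi^2\delta(u,w)(\mathrm{Tr}\,Q(du)^p + \mathrm{Tr}\,Q(dw)^p)\rangle$, I would rewrite as $-\tfrac12\langle\phi^2\delta(u,w)\mathrm{Tr}(Q(du)^p+Q(dw)^p-2Q(dw)^p)\rangle + \langle\phi^2\delta(u,w)\mathrm{Tr}\,Q(dw)^p\rangle$; the first piece is $(X) = \tfrac12\langle\phi^2(Q(dw)^p - Q(du)^p)\delta(u,w)\rangle_\Omega$ after sign bookkeeping, and the second, combined with the leftover $\delta^2$ contributions coming from $(III)$ and from comparing $Q(dw)^p$ at $u$ versus $w$ (via Lemma~\ref{props5} / Proposition~\ref{propine7}), gives $(XI) = \langle\phi^2 Q(dw)^p\delta(w,u)^2\rangle_\Omega$.

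Throughout I would use freely that $\mathrm{Tr}\,Q(W)^l$ and $s(W)^l$ are comparable up to a factor $2$ (inequality (\ref{shatl1})), that $\delta(w,u)$ is small so that weight factors like $\omega(w,u) = 1 + \tfrac12\delta(w,u)$ and $(1+\tfrac14\delta)^{1/2}$ may be absorbed into combinatorial constants, and that $s(dw_u) \leq \omega(w,u)s(dw)$ by Lemma~\ref{compprojnorm}, so the various $s(dw)$ versus $s(dw_u)$ distinctions cost only constants. The main obstacle — and the reason this proposition is delicate rather than routine — is the bookkeeping in term $(II)$: one must extract from Proposition~\ref{propine8} exactly the right fractional power $|S_{p/2}(dw_u)-S_{p/2}(du)|^{4/p}$ and split it by Young's inequality so that the exponent conjugate to $p/2$ is precisely $p/(p-2)$, matching the exponent appearing on $\delta(w,u)$ in term $(IX)$, all while keeping the constant $C(p)$ polynomial in $p$ and keeping enough room on the left side (hence the $\tfrac14$) to also absorb the contribution from $(IV)$. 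A secondary nuisance is tracking which terms are genuinely $O(\delta^{3/2})$ versus $O(\delta^2)$ after all the normal-component projections, so that everything lands in one of the five named terms $(VII)$–$(XI)$ and nothing of order $\delta^{1/2}$ or $\delta$ is left unaccounted for; the smallness assumption $\delta(X,Y) < 1/10$ and the fact that the leading tangential pairing has already been moved to the left are what make this possible.
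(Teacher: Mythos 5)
Your decomposition into $(I)$–$(VI)$ and your handling of $(I)$ (drop the normal part of $dw-dw_u$, then apply Proposition~\ref{propineq1} with $A=du$, $B=dw_u$), of $(II)$ (Proposition~\ref{propine8} plus Young with conjugate exponents $p/2$ and $p/(p-2)$, absorbing into the left), of $(IV)$ (Proposition~\ref{propineq2} plus Young, producing $(VII)$) and of $(V)$ (Proposition~\ref{propineq6}, producing $(VIII)$) all match the paper's argument. The gap is in term $(III)$.

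You claim $(III)=\langle\phi^2(S_{p-1}(dw)-S_{p-1}(dw_u)),(dw-du,u)^\sharp u\rangle$ is of size $s(dw_u)^{p-1}\delta(w,u)^{5/2}$ and controllable by Proposition~\ref{propineq6}. That is false, and the error propagates into your bookkeeping for $(VI)$. Since $(S_{p-1}(dw_u),u)^\sharp=0$ and $(du,u)^\sharp=0$, term $(III)$ reduces to $\langle\phi^2(dw,u)^\sharp;(S_{p-1}(dw),u)^\sharp\rangle$. Each factor $(dw,u)^\sharp=(dw,u-w)^\sharp$ and $(S_{p-1}(dw),u)^\sharp=(S_{p-1}(dw),u-w)^\sharp$ is of order $\delta(w,u)^{1/2}$ (you appear to have conflated these $1$-form pairings with the scalar identity $(w-u,w)^\sharp=\tfrac12\delta(w,u)$ of Lemma~\ref{formdRw}), so $(III)$ is of order $TrQ(dw)^p\,\delta(w,u)$ — this is the curvature term, estimated by Proposition~\ref{morecomp1} as $(III)\leq\langle\phi^2\,TrQ(dw)^p\,\delta(w,u)(1+\tfrac14\delta(w,u))\rangle$, not by Proposition~\ref{propineq6}, which concerns the pairing against $X-Y$ rather than against $u$ itself. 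The correct argument must then add this bound to the negative term $(VI)=-\tfrac12\langle\phi^2\delta(u,w)(TrQ(du)^p+TrQ(dw)^p)\rangle$ so that the order-$\delta$ pieces cancel, leaving exactly $(X)+(XI)$. In your version, after rewriting $(VI)=(X)-\langle\phi^2\delta\,Q(dw)^p\rangle$, there is nothing to cancel the uncontrolled positive order-$\delta$ contribution of $(III)$; an extra term $\langle\phi^2 TrQ(dw)^p\delta(w,u)\rangle$ would survive on the right, which is only $O(t^{2-4/p})$ in the difference-quotient application of Theorem~\ref{mainregtH} and would destroy the $H^1$ estimate. Once $(III)$ is estimated via Proposition~\ref{morecomp1} and paired with $(VI)$, the rest of your proof goes through as written.
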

\begin{proof}
The proof of this comes from the point wise inequalities. We do not keep track of the middle constants $C(p)$, because they do not appear in the a priori estimates and vanish once we have higher regularity.
\begin{itemize}
\item $(I)$  With the notation from the point wise inequalities, $B = dw_u$ and $A = du,$ we have from Proposition~\ref{propineq1} that
\[
1/p<\phi^2(S_{p/2}(dw_u) - S_{p/2}(du)), S_{p/2}(dw_u) - S_{p/2}(du)> \leq (I).
\]
\end{itemize}
This is equal to four times the left hand side of the inequality in Proposition~\ref{propine9}. We will now write $(I)$ as a sum of the terms $(II) -(VI)$. Below we will estimate these  in terms of $(VII) -(XI)$. Twice we will absorb terms from the right hand side to the left hand side. This explains the coefficient $1/(4p)$ in the left hand side of Proposition~\ref{propine9}.
\begin{itemize}
\item $(II)$  We use the point wise inequality in Proposition~\ref{propine8}. With $\tilde B = dw$, $B = dw_u$ and $A = du,$ we have
 \begin{eqnarray*}  
   \lefteqn{  (|S_{p-1}(dw_u) - S_{p-1}(dw); dw_u - du|)^\sharp }\\ 
   &\leq&    2(p-1)C  s(dw_u)^{p-2} \delta(w,u)|S_{p/2}(dw_u)-S_{p/2}(du)|^{4/p}\\
   &\leq&    2/p  \left(|S_{p/2}(dw_u)-S_{p/2}(du)|^{4/p}\right)^{p/2}\\
   &+&   (p-2)/p   \left(2(p-1)C s(dw_u)^{p-2} \delta(w,u)\right)^{p/(p-2)}\\
&=& 1/{2p}   |S_{p/2}(dw_u) - S_{p/2}(du)|^2 +      
               C  TrQ(dw_u)^p\delta(w,u)^{p/p-2}.
  \end{eqnarray*}
  Multiply by $\phi^2$ and integrate.
The first term can be subtracted from the left hand side  (leaving $1/(2p)$) and the second is found in $(IX)$ of the right hand side of the inequality in Proposition~\ref{propine9}.

\item $(III)$   This is a serious term, containing part of the curvature of $N.$ Note that since $(S_{p-1}(dw_u),u)^\sharp = 0$
and $(du, u)^\sharp=0$,  term $(III)$ is equal to 
\[
(<\phi^2(dw,u)^\sharp;(S_{p-1}(dw),u)^\sharp>).
\]
 Proposition~\ref{morecomp1} bounds this term by 
 \[
 <\phi^2 Q(dw)^p(\delta(w,u)(1  + 1/4\delta(w,u))>.
 \]
This combines with term $(VI)$ to give $(X)$ and $(XI)$ of Proposition~\ref{propine9}.

\item $(IV)$ A direct application of Proposition~\ref{propineq2}  bounds this term by
\begin{eqnarray*}
 \lefteqn{2\max d\phi<\phi |S_{p/2}(dw_u)-S_{p/2}(du)|}\\
 &\times&(\delta(w,u)(1+1/4\delta(w,u)))^{1/2} 2\max (s(du),s(dw_u))^{(p-2)/2}>  \\                                                                           
&\leq& 1/(2p) <\phi^2|S_{p/2}(dw_u)-S_{p/2}(du)|^2> + (VII).
 \end{eqnarray*}
Subtracting this from $(I)$ leaves the $1/(4p)$ as the coefficient on the right hand side of the inequality of Proposition~\ref{propine9}.
\item $(V)$  There is a direct bound of $(V)$ by $(VIII)$ via Proposition~\ref{propineq6}.
 \end{itemize} 
 \end{proof}
We can now finish the regularity theorem.

\begin{theorem}\label{mainregtH}Let $u=u_p$ satisfy the $J_p$-Euler-Lagrange equations
 and  $w = w_t = g_t^*u$, where $g = \exp (at)$ for $a$ an element of the Lie algebra.  Then
\[
\lim_{t \rightarrow 0}1/{t^2} <\phi^2(S_{p/2}(dw_t)_u - S_{p/2}(u)), S_{p/2}(dw_t)_u - S_{p/2}(u)>_\Omega 
\]
 is finite and $S_{p/2}(du) \in H^1(\Omega'). $  Moreover,
\[
      ||S_{p/2}(du)||_{H^1(\Omega')} \leq k p <Q(du)^p>_\Omega^{1/2}
\]
where $k$ depends on $\Omega' \subset \Omega$ but not on $p.$
Moreover,
\[
        ||S_{p/2}(du)||_{H^1(M)} \leq k' p <Q(du)>^{1/2},
  \]
  where $k'$ depends on $M$ but not on $p.$
\end{theorem}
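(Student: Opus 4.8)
The plan is to run a Caccioppoli-type / difference-quotient argument built on Proposition~\ref{propine9}. The point is that all five error terms $(VII)$--$(XI)$ on the right-hand side of that proposition are \emph{linear} (or better) in $\delta(w,u)$ up to powers, and for the translated solution $w_t=g_t^*u$ we have the pointwise bound $\delta(w_t,u)=(u(g_tx)-u(x),u(g_tx)-u(x))^\sharp \le C t^2 |du|^2$ on the support of $\phi$, coming from Lemma~\ref{distpos} together with the fact that $g_t$ moves points a distance $O(t)$. Dividing the inequality of Proposition~\ref{propine9} by $t^2$ and letting $t\to 0$, each term on the right is controlled by $C\,p^2\int_\Omega (s(du)^{p-2}+s(dw_u)^{p-2})|du|^2*1$ plus higher-order-in-$t$ junk; since $s(du)^{p-2}|du|^2 \le r\,s(du)^p \le r\, TrQ(du)^p$ (using \eqref{shatl1}), the right-hand side is bounded by $C\,p^2 <Q(du)^p>_\Omega$, uniformly in $t$. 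This gives the uniform $L^2$ bound on the difference quotients $\frac1t(S_{p/2}(dw_t)_u - S_{p/2}(du))$, hence $DS_{p/2}(du)\in L^2(\Omega')$ together with the estimate
\[
||S_{p/2}(du)||_{H^1(\Omega')} \le k\, p <Q(du)^p>_\Omega^{1/2},
\]
the $L^2$ norm of $S_{p/2}(du)$ itself being at most $<Q(du)^p>^{1/2}$ trivially and the covariant derivative being controlled by the difference-quotient limit. One must run this for finitely many $a$ spanning the Lie algebra so that the translates $g_t$ sweep out all directions at each point of $\Omega'$; then the bound on covariant derivatives in all coordinate directions follows.

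For the global statement, I would cover the compact surface $M$ by finitely many balls $\Omega'_k \Subset \Omega_k$ on each of which the local symmetric space structure is available (so that translations $g_t^*u$ make sense), apply the local estimate on each, and sum. The constants $k$ produced locally depend only on the geometry of $\Omega'_k \subset \Omega_k \subset \HH$ and not on $p$ (this is exactly what Proposition~\ref{propine9} delivers, since the combinatorial constants $C(p)$ drop out in the a priori estimate and only the explicit $p$-powers $64p$, etc., survive). Summing over the finite cover and using $\sum_k <Q(du)^p>_{\Omega_k} \le C <Q(du)^p>_M = C\,J_p(u)$ gives
\[
||S_{p/2}(du)||_{H^1(M)} \le k'\, p\, <Q(du)^p>^{1/2},
\]
with $k'$ depending on $M$ (and the chosen cover, hence on the metric) but not on $p$. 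The final clause of the theorem statement writes $<Q(du)>^{1/2}$ for $J_p(u)^{1/2}=<Q(du)^p>^{1/2}$ in the abbreviated bracket notation introduced before Proposition~\ref{piati}.

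The main obstacle is the passage to the limit $t\to 0$: one needs to know a priori that the difference quotients actually \emph{converge} (not merely stay bounded) to the covariant derivative, and that the various terms involving $dw_t$ (as opposed to $du$) behave correctly under this limit — in particular that $s(dw_t)\to s(du)$ in a strong enough sense to pass the bounds through, and that $S_{p/2}(dw_t)_u$, the projection of the translated quantity back into $T_{u(x)}\HH$, differs from $S_{p/2}(du(g_t\cdot))$ by terms that are $O(t^2)$ pointwise (Proposition~\ref{propine7} is the relevant input). Since $u$ is only known to be in $W^{1,p}\cap C^\alpha$, one cannot differentiate directly; the standard fix is to note that $dw_t \to du$ in $L^p$ (translation is continuous on $L^p$), extract the weak $L^2$ limit of the bounded difference quotients, and identify it with $DS_{p/2}(du)$ in the distributional sense by testing against smooth compactly supported sections — the uniform bound then passes to the limit by weak lower semicontinuity of the $L^2$ norm. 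Managing the bookkeeping of the $w_t$-dependent terms uniformly in $t$, while the solution has only Sobolev regularity, is where the real care is needed; everything else is an assembly of the pointwise inequalities of the previous subsection.
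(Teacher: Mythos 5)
Your overall strategy is the paper's: difference quotients $w_t=g_t^*u$ fed into Proposition~\ref{propine9}, term-by-term control of $(VII)$--$(XI)$, weak $L^2$ compactness of the bounded difference quotients, and a finite cover for the global statement. But there are two genuine gaps in how you control the right-hand side. First, the pointwise bound $\delta(w_t,u)\leq Ct^2|du|^2$ is not available: $u$ is only known to lie in $W^{1,p}\subset C^{1-2/p}$, so the a priori estimate one actually has is the \emph{uniform} bound $\max\delta(w_t,u)\leq kt^{2-4/p}$, and the distinction matters. For instance term $(IX)$ carries $\delta(w,u)^{p/(p-2)}$ against the weight $s(dw_u)^p$; with the Hölder bound this is exactly $\left(t^{2-4/p}\right)^{p/(p-2)}=t^2$ times an $L^1$ quantity, whereas with your claimed pointwise bound you would need $s(dw_u)^p|du|^{2p/(p-2)}$ to be integrable, which is not known. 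The correct bookkeeping is: $(XI)$ and $(VIII)$ are $o(t^2)$ for $p>4$ using the Hölder bound (for $(VIII)$ one also uses that $t^{-1}\delta^{1/2}$ converges to $|d_au|$ in $L^p$), $(IX)$ is exactly $O(t^2)$, and only $(VII)$ survives in the limit, where $t^{-2}\delta(w_t,u)\to|d_au|^2$ is legitimate because it is used inside an integral against $s(du)^{p-2}+s(dw)^{p-2}$, not as a pointwise a priori bound.

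Second, and more seriously, you do not address term $(X)=\tfrac12\langle\phi^2(Q(dw)^p-Q(du)^p)\delta(u,w)\rangle$ at all, and it cannot be absorbed into "$Cp^2\int(s(du)^{p-2}+s(dw_u)^{p-2})|du|^2$ plus higher-order junk": dividing by $t^2$ and using $\delta\leq kt^{2-4/p}$ would require $\|TrQ(dw_t)^p-TrQ(du)^p\|_{L^1}=o(t^{4/p})$, i.e.\ a quantitative continuity of the top-order quantity you are trying to prove is regular. The paper's resolution is a symmetrization: write the inequality of Proposition~\ref{propine9} also with the roles of $u$ and $w_t$ reversed and add; the two $(X)$ terms are antisymmetric in $u\leftrightarrow w_t$ and cancel, while both left-hand sides are nonnegative and the remaining terms are handled identically. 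Without this cancellation (or some substitute) the argument does not close. The remaining ingredients of your write-up — the use of Proposition~\ref{propine7} to pass between $S_{p/2}(dw_t)_u$ and $S_{p/2}((dw_t)_u)$, running over a spanning set of Lie algebra elements, weak lower semicontinuity to identify the limit with $DS_{p/2}(du)$, and the covering argument for the global bound — match the paper and are fine.
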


\begin{proof}  We proceed as in Proposition~\ref{propine9} and in an arbitrary neighborhood with arbitrary choice of the Lie algebra element $a.$ Note that $W^{1,p} \subset C^{1 - 2/p}.$ This means that we have a uniform estimate  
  \[           
            \max \delta(w_t,u) \leq kt^{2-4/p}.
\]
Of course, $k$ depends on the manifold and the choice of the Lie algebra element $a. $

We are going to use the estimate from Proposition~\ref{propine9}  to prove Theorem~\ref{mainregtH}.  However, it is not quite correct yet. Formally we need a uniform bound on 
\[
1/t ||{S_{p/2}(dw_t)}_u -S_{p/2}(du)||_{L^2(\Omega')}
\]
 when Proposition~\ref{propine9} is only giving us a bound on
 \[ 
1/t||S_{p/2}({(dw_t)}_u) - S_{p/2}(du)||_{L^2(\Omega')}.
\]
However,
\begin{eqnarray*}
     \lefteqn{||{S_{p/2}(dw_t)}_u -S_{p/2}(du)||_{L^2(\Omega')}}\\
      &\leq&
        ||{S_{p/2}(dw_t)}_u - {S_{p/2}((dw_t)}_u)||_{L^2(\Omega')} +
        ||{S_{p/2}((dw_t)}_u)-S_{p/2}(du)||_{L^2(\Omega')}.
\end{eqnarray*}
A straight forward application of the inequalities of Proposition~\ref{propine7}, with $dw_t= \tilde B$, $dw_u = B$, $Y = w_t(x)$ and $X = u(x)$  bounds
\[
         ||{S_{p/2}(dw_t)}_u - S_{p/2}(({dw_t})_u)||_{L^2(\Omega')} \leq 
        p|| Q((dw_t)_u)^{p/2} \delta(w_t,u)||_{L^2(\Omega')}.
\]
For $p>4$ this goes to 0 at the rate $O(t^{2-4/p}) \leq O(t)$ since $Q(dw_t)^p$ is bounded in $L^1$ and $\delta(w_t,u)  \leq kt^{2(1-2/p)}.$

We start checking the terms on the right of Proposition~\ref{propine9} in reverse order. We need bounds on the order of $t^2.$
\begin{itemize}
\item $(XI)$  $Q(dw_t)^p$ is bounded in $L^1$ and $\delta(w_t,u)^2  \leq k^2t^{4(1-2/p)}.$
This goes to 0 faster than $t^2$ for $p > 4$  and can be neglected.

\item $(X)$   This term looks difficult at first, but we reverse the roles of $w_t$ and add the two inequalities. The left hand side of the inequalities are positive, terms $(X)$ cancel and the other terms are handled the same way in both inequalities. 

\item $(IX)$ The bound is exactly $\delta(w_t,u)^{p/(p-2)}=O(t^2).$ So the contribution to the estimate is exactly bounded by a constant times the $p$ norm of $w_t,$ which is the $p$ norm of $u$ as well. Once we have any estimate on $S_{p/2}\in H^1$, the next corollary and Sobolev embedding imply that this term will converge faster and can be neglected.

\item $(VIII)$ This term is bounded by $||s(dw)^p||_{L^1}^{(p-1)/p} ||d_a u||_{L^p} t^{2 -4/p}$ which goes to 0 faster than $t^2.$ Here we use that $1/t\delta(w_t,u)^{1/2} $ approaches the derivative of $u$ in the direction $a$ as $ t \rightarrow 0.$

\item $(VII)$  This term, when divided by $t^2$, is bounded by
 $2Q(du)^{p-2}(d_a u)^2.$  This is the only term which survives once we know that $u$ is in $C^\alpha$ for $\alpha > 1-2/p.$
 \end{itemize}
\end{proof}

\begin{corollary}\label{cormainredsh} If  $u_p$ satisfies the $J_p$-Euler-Lagrange equations, then
$|du_p|$ is in $L^s$ for all $s.$  Moreover for all $s < \infty$
\[
    ||du_p||_{L^{ps}} \leq 2(k' C_s p)^{2/p}||du_p||_{sv^p}.
    \]
    Here $C_s$ is the norm of the embedding $H^1$ in $L^{2s}$ and $k'$ depends on $M$ and not on $p$.
\end{corollary}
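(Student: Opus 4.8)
The plan is to obtain Corollary~\ref{cormainredsh} as a direct consequence of the global $H^1$ estimate in Theorem~\ref{mainregtH}, the two–dimensional Sobolev embedding, and a pointwise comparison of $|du_p|$ with $|S_{p/2}(du_p)|$. The key pointwise observation is purely linear–algebraic: at a point where $du$ has singular values $s_1\ge s_2\ge 0$, the composition $S_{p/2}(du)=Q(du)^{p/2-1}du$ has singular values $s_1^{p/2}\ge s_2^{p/2}$, so
\[
|S_{p/2}(du)|_2=\big(s_1^p+s_2^p\big)^{1/2}=|du|_{sv^p}^{p/2},\qquad s\big(S_{p/2}(du)\big)=s(du)^{p/2}.
\]
Since $\dim M=2$ we have $|du|_2^2=s_1^2+s_2^2\le 2\,s(du)^2$, and the operator norm of a matrix is bounded by its Euclidean norm, so pointwise
\[
|du|_2\ \le\ \sqrt2\,s(du)\ =\ \sqrt2\,s\big(S_{p/2}(du)\big)^{2/p}\ \le\ \sqrt2\,\big|S_{p/2}(du)\big|_2^{2/p}.
\]

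Next I would raise this inequality to the power $ps$, integrate over $M$, and take $ps$-th roots, which gives
\[
\|du_p\|_{L^{ps}}\ \le\ \sqrt2\,\Big(\int_M \big|S_{p/2}(du_p)\big|_2^{2s}*1\Big)^{1/(ps)}\ =\ \sqrt2\,\big\|S_{p/2}(du_p)\big\|_{L^{2s}}^{2/p}.
\]
Because $M$ is a surface, the Sobolev embedding $H^1(M)\hookrightarrow L^{2s}(M)$ holds for every finite $s$; denoting its norm by $C_s$ and invoking the global bound of Theorem~\ref{mainregtH}, namely $\|S_{p/2}(du_p)\|_{H^1(M)}\le k' p\,\|du_p\|_{sv^p}^{p/2}$, we conclude
\[
\|du_p\|_{L^{ps}}\ \le\ \sqrt2\,\big(C_s\,k' p\,\|du_p\|_{sv^p}^{p/2}\big)^{2/p}\ =\ \sqrt2\,(k'C_sp)^{2/p}\,\|du_p\|_{sv^p}\ \le\ 2\,(k'C_sp)^{2/p}\,\|du_p\|_{sv^p},
\]
which is the asserted inequality. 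Finally, to see $|du_p|\in L^t$ for every finite $t$: if $t\le p$ this is immediate since $M$ is compact and $du_p\in W^{1,p}\subset L^p\subset L^t$, while if $t>p$ it follows from the displayed estimate applied with $s=t/p>1$.

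There is no genuine obstacle at this stage, since all the analytic work is already contained in Theorem~\ref{mainregtH}. The only points that require care are the bookkeeping of singular values under the map $W\mapsto Q(W)^{p/2-1}W$ (which is what forces $ps$, rather than $s$, to be the natural exponent on the left) and the use of the embedding $H^1\hookrightarrow L^q$ for all $q<\infty$, a feature special to two dimensions and fully consistent with the standing assumption that $M$ and $N$ are hyperbolic surfaces.
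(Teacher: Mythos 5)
Your argument is correct and is essentially the paper's proof: both reduce the claim to the pointwise bound $|du_p|\le 2\,(\operatorname{Tr}Q(du_p)^p)^{1/p}$, the two-dimensional Sobolev embedding $H^1\hookrightarrow L^{2s}$, and the global $H^1$ estimate of Theorem~\ref{mainregtH}. The only step you elide is that $S_{p/2}(du_p)$ is a section of a bundle with covariant derivative $D$, so to apply the scalar Sobolev embedding one first passes to the scalar function $|S_{p/2}(du_p)|=(\operatorname{Tr}Q(du_p)^p)^{1/2}$ via the Kato-type inequality $(d|\xi|;d|\xi|)\le (D\xi;D\xi)^\sharp$, which the paper makes explicit.
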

\begin{proof}  By applying the standard inequality $(d|\xi|; d|\xi|)\leq (D\xi; D\xi)^\sharp$ for $\xi=S_{p/2}(du_p)=Q(du_p)^{p/2-1}du_p$,
\[
|d (Tr(Q(du_p)^p)^{1/2}| \leq |D(Q(du_p)^{p/2-1}du_p)|.
\]
   Hence, from Theorem~\ref{mainregtH}  and the Sobolev embedding  $ H^1 \subset L^{2s}$ we have
\begin{eqnarray}\label{eqnsobemb}
            || (TrQ(du_p)^p))^{1/2}||_{L^{2s}} \leq C_s ||(TrQ(du_p)^p)^{1/2}||_{H^1} \leq
            k'C_s p ||du_p||_{sv^p}^{p/2}.
\end{eqnarray}
Since
\[
            |du_p|^p \leq 2^p Tr Q(du_p)^p,
\]
  \[
          ||du_p||_{L^{sp}} \leq 2 (||TrQ(du_p)^p)^{1/2}||_{L^{2s}})^{2/p}.
\]
The result follows directly from this.
 \end{proof}

    Note that minimizing $p$-harmonic maps, $p>2$ are $C^{1, \alpha}$ (cf. \cite{uhlen} and \cite{hardlin}). The higher regularity of the $J_p$-minimizers is an interesting question which we state as a conjecture:
    \begin{conjecture}\label{highregu} Let $u_p: M \rightarrow N$  satisfiy the $J_p$-Euler-Lagrange equations between hyperbolic surfaces, $p>2$. Then $u_p$ is in $C^{1, \alpha}$.
    \end{conjecture}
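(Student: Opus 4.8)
The plan is to argue locally and improve regularity in stages: first promote $du_p$ to a bounded field, then run a linearization scheme on the region where $du_p$ is nondegenerate, and finally cross the degenerate set. The first stage is to upgrade the conclusion of Corollary~\ref{cormainredsh}, $du_p \in L^s$ for all $s$, to $du_p \in L^\infty_{loc}$. The natural vehicle is a Bochner inequality for $\sigma := \mathrm{Tr}\, Q(du_p)^p = |S_{p/2}(du_p)|^2$: since Theorem~\ref{mainregtH} already gives $S_{p/2}(du_p) \in H^1_{loc}$ one knows $\sigma \in W^{1,1}_{loc}$, and differentiating the Euler--Lagrange equation $D^*(Q(du)^{p-2}du)=0$ and contracting against $S_{p/2}(du_p)$ should produce, in the two-dimensional negative-curvature setting, a differential inequality $\mathrm{div}(\mathcal A\, d\sigma) \ge -C(\sigma+1)$ with a symmetric, possibly degenerate coefficient $\mathcal A \asymp Q(du_p)^{p-2}$. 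Even without uniform ellipticity of $\mathcal A$, the $L^s$ bounds already in hand for every $s$ feed a Moser/De Giorgi iteration that closes in dimension two, yielding $\sigma \in L^\infty_{loc}$ and hence $|du_p| \in L^\infty_{loc}$.

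Once $du_p$ is locally bounded, on the open set $\mathcal O = \{\det du_p \neq 0\}$ both singular values are bounded above and (locally) below, so the linearized equation for $du_p$ is a linear elliptic system with bounded measurable coefficients and bounded right-hand side; De Giorgi--Nash--Moser followed by Schauder bootstrapping gives $u_p \in C^{1,\alpha}_{loc}(\mathcal O)$, indeed $C^\infty$ there. What remains is to cross the degenerate set $\Sigma = \{s_2(du_p) = 0\}$ and, inside it, the locus $\{du_p = 0\}$; on $\Sigma \setminus \{du_p=0\}$ the differential has pointwise rank one. Here I would attempt one of two routes: (a) exploit the conformal structure in two dimensions, writing $du_p$ through complex derivatives and showing the complex dilatation solves a degenerate Beltrami-type equation amenable to quasiconformal higher-integrability and hodograph arguments, in the spirit of the planar $p$-harmonic theory; or (b) pass to the dual object of Theorem~\ref{TTheorem A6}, noting that $Q(Z)^{q-2}Z = *du_p$ with $q = p/(p-1) \in (1,2)$, so the conjugate solves a \emph{subquadratic} $J_q$-system, and try to transfer the better-behaved subquadratic regularity back to $u_p$.

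The main obstacle is exactly the anisotropy the authors flag: the ellipticity ratio of the linearized operator is of order $(s_1/s_2)^{p-2}$ and degenerates as $s_2 \to 0$ even while $s_1$ stays bounded below, so neither the isotropic $p$-Laplacian techniques of Uhlenbeck and DiBenedetto nor hole-filling survive across $\Sigma$. Making either route (a) or (b) work seems to require a quantitative handle on how fast $s_2$ may vanish relative to $s_1$ --- for instance, that the matrix weight $Q(du_p)^{p-2}$ lies in a Muckenhoupt-type class near $\Sigma$, or that $\Sigma$ has a controlled geometric structure --- which I do not see how to extract from the variational structure alone; this is why the statement is offered only as a conjecture. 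A complementary line is the continuity method alluded to in the text: embed $J_p$ in a one-parameter family of integrands with smooth minimizers, whose a priori estimates are easy, and propagate a \emph{uniform} $C^{1,\alpha}$ bound along the family up to $J_p$; but the uniform estimate there collides with the same anisotropy and is the crux of that approach too.
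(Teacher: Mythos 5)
The statement you are addressing is stated in the paper only as a conjecture (Conjecture~\ref{highregu}); the authors give no proof and explicitly say they were unable to establish $C^{1,\alpha}$ regularity even for $p=4$. Your submission is accordingly a strategy outline rather than a proof, and you say so yourself at the end. That honesty is appropriate, but it means there is nothing here that settles the statement, so I will point to where the concrete gaps are.

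The first gap is already in your opening step. You assert that the $L^s$ bounds for all $s$ ``feed a Moser/De Giorgi iteration that closes in dimension two'' even without uniform ellipticity of $\mathcal A\asymp Q(du_p)^{p-2}$. This is not justified: Moser iteration for $\mathrm{div}(\mathcal A\, d\sigma)\ge -C(\sigma+1)$ needs a Caccioppoli inequality in which the coefficient is bounded below on the gradient term, and here the lower eigenvalue of $\mathcal A$ is $s_2(du_p)^{p-2}$, which can vanish while $s_1$ stays large --- precisely the anisotropy the authors identify as the obstruction. Nor can you pass from $L^s$ for all $s$ to $L^\infty$ by letting $s\to\infty$ in Corollary~\ref{cormainredsh}: the constant there involves the norm $C_s$ of the embedding $H^1\subset L^{2s}$, which grows with $s$ in dimension two, so the bounds do not remain uniform. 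The second and decisive gap is the one you name: crossing the set $\Sigma=\{s_2(du_p)=0\}$. Both of your proposed routes (the Beltrami/hodograph route and the dual subquadratic $J_q$ route) founder on the same missing ingredient, a quantitative bound on $s_1/s_2$ or a Muckenhoupt-type condition on the matrix weight, and you offer no mechanism for extracting it from the variational structure. So the proposal correctly diagnoses why the statement is open but does not advance beyond the paper's own assessment; as written it cannot be accepted as a proof of the conjecture.
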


\section{The limit $q \rightarrow 1$}\label{lipqgoesto1}
Recall from Section~\ref{sect:Varblm} that, as $p \rightarrow \infty$, the minimizers $u_p$ of $J_p$ converge to a best Lipschitz map $u$. In this section we show that, after normalization, the Noether currents of $u_p$ converge converge as well.  More precisely, there exist Radon measures $S$ and $V$ with values respectively in $T^*(M) \otimes E$ and $T^*(M) \otimes ad (E)$ which are weak limits of the (appropriately rescaled) tensors 
 $S_{p-1}(du_p)$ and $V_q= *S_{p-1}(du_p) \times u_p $ associated with the minimizer $u_p$. Similarly, there exist Radon measures $T$ and $W$ with values respectively in $T^*(M) \otimes F$ and $T^*(M) \otimes ad (F)$ which are the weak limits of the (appropriately rescaled)
tensors $T_q = (S_{p-1}(du_p), du_p)^\sharp \rightharpoonup T$ and $W_q= T_q \times id \rightharpoonup V$.   Here $1/p + 1/q = 1.$

We show $V, W$ are closed as a 1-currents with respect to the flat connection on the flat Lie algebra bundles $ad (E)$ and $ad (F)$. 
In Section~\ref{sec7} we will prove  that the supports of these measures  are contained in the canonical  lamination $ \lambda$ associated to the hyperbolic metrics $g$, $h$ and the homotopy class.

\subsection{The normalizations}\label{normkappa} In this section  fix a $2\leq p < \infty$, $1< q \leq 2$ satisfying
(\ref{form:conjugate})
and  let $u_p$  be the $J_p$-minimizer in a fixed homotopy class. 
Choose a normalizing factor $\kappa_p$ and define $U_p= \kappa_p du_p$
such that 
\begin{equation}\label{normintv1}
 ||U_p||_{sv^p}^p= \int_M   Tr  Q(U_p)^p*1 = <Q(U_p)^p> = 1.
\end{equation}
Consider the normalized Noether current
\begin{equation}\label{normintv21}
   S_{p-1} =  Q(U_p)^{p-2} U_p  \in  \Omega^1(E).
\end{equation}
Note that by (\ref{form:conjugate}) and (\ref{normintv1}) 
\begin{eqnarray}\label{Quv}
 Q(S_{p-1})^2=(S_{p-1} ; S_{p-1})^\sharp=
 Q(U_p)^{2p-2}
\end{eqnarray}
satisfies
\begin{equation}\label{dualnor}
||S_{p-1}||_{sv^q}^q=\int_M Tr Q(S_{p-1})^q*1=\int_M Tr Q(U_p)^p*1=||U_p||^p_{sv^p}=1.
\end{equation}
\begin{lemma} \label{klemma1}Under the normalizations above, 
$\lim_{p \rightarrow \infty} \kappa_p = L^{-1}. $
\end{lemma}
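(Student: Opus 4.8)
The plan is to compare the normalization condition $\langle Q(U_p)^p\rangle = 1$ with the asymptotic behavior of $J_p(u_p)^{1/p}$, which is controlled by Lemma~\ref{pintconvto} (the $p$-approximations satisfy $J_p^{1/p}(u_p)\to L$). Since $U_p = \kappa_p\, du_p$, we have $Q(U_p)^p = \kappa_p^p\, Q(du_p)^p$ pointwise, hence
\[
1 = \langle Q(U_p)^p\rangle = \kappa_p^p \int_M \mathrm{Tr}\,Q(du_p)^p *1 = \kappa_p^p\, J_p(u_p).
\]
Therefore $\kappa_p = J_p(u_p)^{-1/p}$, and taking $p\to\infty$ gives $\kappa_p \to L^{-1}$ directly from Lemma~\ref{pintconvto}. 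So the core identity is almost immediate once one unwinds the definitions.

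First I would record the pointwise scaling identity $Q(\kappa_p du_p) = \kappa_p Q(du_p)$, which follows since $Q(A)^2 = AA^T$ is quadratic in $A$ and $\kappa_p > 0$; hence $\mathrm{Tr}\,Q(U_p)^p = \kappa_p^p\,\mathrm{Tr}\,Q(du_p)^p$. Integrating over $M$ and invoking the normalization (\ref{normintv1}) yields $\kappa_p^p J_p(u_p) = 1$, i.e. $\kappa_p = J_p(u_p)^{-1/p}$. Then I would apply Lemma~\ref{pintconvto}: since $u = \lim u_p$ is the infinity harmonic map with Lipschitz constant $L$ (by the standing assumptions and Theorem~\ref{thm:existence}), we have $J_p(u_p)^{1/p}\to L$, and $L \geq 1 > 0$ by the standing assumption, so the limit is legitimate and $\kappa_p = J_p(u_p)^{-1/p} \to L^{-1}$.

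The only subtlety worth a sentence is making sure the $u_p$ here are indeed $p$-approximations of $u$ in the sense of Definition~\ref{def:infhharm}, so that Lemma~\ref{pintconvto} applies; this is the setup fixed at the start of Section~\ref{lipqgoesto1} (we take the subsequence from Theorem~\ref{thm:existence}). Given that, there is no real obstacle: the lemma is essentially a bookkeeping consequence of the normalization choice together with the already-established asymptotics $J_p^{1/p}(u_p)\to L$. I do not expect any hard step; the proof is two or three lines.
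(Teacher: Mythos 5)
Your proof is correct and is exactly the paper's argument: the normalization $\langle Q(U_p)^p\rangle=1$ gives $\kappa_p^{-1}=J_p(u_p)^{1/p}$, and Lemma~\ref{pintconvto} supplies the limit $J_p(u_p)^{1/p}\to L$. The extra care you take about the scaling identity and about $u_p$ being genuine $p$-approximations is sound but does not change the route.
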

 \begin{proof} 
 By (\ref{normintv1}) and Lemma~\ref{pintconvto}  
\[ 
\lim_{p \rightarrow \infty} \kappa_p^{-1}=\lim_{p \rightarrow \infty}  J_p(u_p)^{1/p} = L.
\]
\end{proof}

From now on we will replace all the Noether currents by the normalized Noether currents. 
Namely set
\[ 
S_{p-1}=S_{p-1}(U_p), \ Z_q = *S_{p-1} \ \mbox{and} \  V_q= Z_q \times U_p.
\]

\subsection{The limit of $V_q$ as $q \rightarrow 1$}
We review the construction of the limiting measures in the case that $N = S^1$. In this case,  $u_p: M \rightarrow S^1$, $S_{p-1} = |du_p|^{p-2}du_p$, and $Z_q = *S_{p-1}=V_q$ is a closed one-form satisfying $d^* |V_q|^{q-2}V_q = 0$. Because  $V_q$ is closed, $V_q = dv_q$ for a local function $v_q$.  If we normalize $V_q$ as described in the previous section, there exists a subsequence $v_q \rightarrow v$ with $V_q = dv_q \rightarrow V = dv$.  Furthermore, $V$ has locally finite total variation and  $v$ is locally a function of bounded variation. 

We could easily extend this to the case when $N$ is flat and of dimension greater than 1.  However, in the present situation, $V$ and $dv$ are one-forms with values in $ad(E)$ and the Killing form is not  positive definite.  In order to resolve this difficulty, we need to make use of the map $u: M \rightarrow N$ in order to project onto the positive definite part. This may look a bit unfamiliar at first. All it means is that we  consider them as tensors with values in the pullback bundle $u^{-1}(TN)$. 

We first concentrate on $S$, although it is not closed, as our support argument is for $S$. To obtain an argument for a closed one form, we translate what we have proved to $V=*S \times u = dv$.  In fact, the two definitions are equivalent.

Let  
\[
f: M \rightarrow H=\tilde M\times_\rho \HH \subset E =\tilde M\times_\rho \R^{2,1}
\]
be a Lipschitz  section and  $\xi \in \Omega^1(E)$. Define $\xi_f \in \Omega^1(E)$ by setting $\xi_f \in \Omega^1(E)$ to be  the section corresponding to the $\rho$-equivariant map $\tilde \xi_{\tilde f}$, where as usual tilde means lift to the universal cover. Recall, from  (\ref{proj1}),  that
$\tilde \xi_{\tilde f} =\tilde \xi+ (\tilde \xi, \tilde f)^\sharp \tilde f.$

\begin{definition}\label{projcurr}
Let  $\gamma$ be a  1-current  with values in  $E$. We write $\gamma_f=\gamma$ if, for any  $\xi \in \Omega^1(E)$, $\gamma[\xi _f]=  \gamma[\xi]$. Note that, because $f$ is Lipschitz, $\gamma[\xi _f]$ is defined.  
\end{definition}

\begin{definition}\label{measrcurr} Let $f$ be a Lipschitz section as before and $\gamma$ a 1-current  with values in  $E$.  
 The {\it mass  of $\gamma$ with respect to $f$} assigns to each open set $U$ of $M$
\[
||\gamma||_{mass,f,U}= \sup \{ \gamma[\xi]: \xi \in \Omega^1(E), spt(\xi) \subset U, \xi=\xi_f, s( \xi) \leq 1 \}.
\]
If $||\gamma||_{mass,f, M}<\infty$  call $\gamma$ a \it{Radon measure with values in $T^*M \otimes E$}.
\end{definition}

Note that, in the above definition, we have used the operator norm ($\infty$-Schatten norm) instead of the $L^\infty$-norm which seems geometrically better suited for our problem. Clearly, this does not affect the notions of ``finite mass" and ``bounded variation" even though the exact values of the norms depend on which norm we are using.

It is convenient to denote 
\begin{equation}\label{def|sp-1}
|S_{p-1}|:=(S_{p-1};U_p)^\sharp = Tr Q(U_p)^p= |U_p|_{sv^p}^p
\end{equation}
viewed as a non-negative  measure on $M$. 
For a test function $\xi \in \Omega^1(E)$, define the 1-current with values in  $E$
\begin{eqnarray}\label{krisn}
S_{p-1}[\xi] =  <S_{p-1},  *\xi >=
         <S_{p-1}, *\xi_{u_p}>= S_{p-1}[\xi_{u_p}].
\end{eqnarray}

 \begin{theorem}\label{TTheorem A5}  Given a sequence $p \rightarrow \infty$, there exists a subsequence (denoted again by $p$), a real-valued positive Radon measure $|S|$ and a Radon measure $S$ with values in $T^*M \otimes E$ such that 
\begin{itemize}
\item $(i)$ $ |S_{p-1}| \rightharpoonup |S| $ and $\int_M |S| = 1$.
\item $(ii)$ $ S_{p-1} \rightharpoonup S$, $||S||_{mass,u,M} \leq 1$ and     $ S=S_u$.  
\item $(iii)$support $S$ in support  $|S|$
\item $(iv)$ $||S||_{mass,u,M}  =1$. In particular $S$ is non-zero.
\end{itemize}  
\end{theorem}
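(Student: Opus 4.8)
\textbf{Proof plan for Theorem~\ref{TTheorem A5}.}

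The plan is to extract the two measures by weak-* compactness, then verify each item by carefully managing the obstruction that the natural inner product on $E$ is indefinite, so all positivity/mass arguments must be carried out after projecting onto the (moving) tangent space $T_{u_p}\HH$ or $T_u\HH$. First I would establish the uniform mass bounds: by the normalization (\ref{normintv1}), $\int_M |S_{p-1}|*1 = \langle Q(U_p)^p\rangle = 1$, so $|S_{p-1}|$ is a sequence of probability measures on the compact $M$ and hence has a weak-* convergent subsequence with limit $|S|$ satisfying $\int_M |S|*1 = 1$; this is item $(i)$. For the vector-valued measure $S_{p-1}$, I would test against $\xi\in\Omega^1(E)$ with $s(\xi)\le 1$: using (\ref{krisn}), $S_{p-1}[\xi] = \langle S_{p-1}, *\xi_{u_p}\rangle$, and by Proposition~\ref{shat1}$(ii)$ together with Lemma~\ref{compprojnorm} applied at $X=u(x)$, $Y=u_p(x)$ (controlling $(\xi_{u_p})$ in terms of $\xi$ with weight $\omega(u,u_p)\to 1$ uniformly since $u_p\to u$ uniformly), one bounds $|S_{p-1}[\xi]|$ by $\|S_{p-1}\|_{sv^q}\cdot\sup\omega\cdot s(\xi)$, and (\ref{dualnor}) gives $\|S_{p-1}\|_{sv^q}=1$. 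So the total variations of $S_{p-1}$ (as $T^*M\otimes E$-valued measures) are uniformly bounded, yielding a further subsequence $S_{p-1}\rightharpoonup S$; passing to the limit in the bound gives $[[S]]_u\le 1$. The identity $S=S_u$ follows because $S_{p-1}=(S_{p-1})_{u_p}$ exactly (the Noether current is tangent at $u_p$), $u_p\to u$ uniformly, and the projection $\xi\mapsto\xi_f$ depends continuously on $f$ in the sup norm (Definition~\ref{projcurr}); this completes $(ii)$.

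Next, items $(iii)$ and $(iv)$: inclusion of supports. For $(iv)$, support $|S|\subset$ support $S$, I would argue contrapositively — if $S\equiv 0$ on an open set $\Omega$, I want $|S|\equiv 0$ there. Pointwise, $|S_{p-1}| = (S_{p-1},U_p)^\sharp = Tr\,Q(U_p)^p = \sum_i s_i(U_p)^p$, while $|S_{p-1}|_{sv^q}^{\,q}$-type quantities control $|S_{p-1}|$ from below via $|S_{p-1}| = (S_{p-1},U_p)^\sharp\le s(U_p)\,|S_{p-1}|_{sv^1}$ and a reverse bound $|S_{p-1}|\ge$ (a power of) $|S_{p-1}|_{sv^\cdot}$; combined with the a priori $L^s$ bound $\|U_p\|_{L^s}\le C$ of Corollary~\ref{cormainredsh} (which bounds $s(U_p)$ in every $L^s$, $C$ independent of $p$), testing $S_{p-1}$ against bump functions times suitable tangent sections $\xi$ shows that if the $E$-valued mass of $S_{p-1}$ localized to $\Omega$ tends to $0$ then so does $\int_\Omega|S_{p-1}|*1$. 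The reverse inclusion $(iii)$ is more delicate: if $|S|\equiv 0$ on $\Omega$ then the scalar masses $\int_\Omega |S_{p-1}|*1\to 0$, and since $|S_{p-1}|\ge \tfrac1r|U_p|_{sv^p}^p$ pointwise with $S_{p-1}=Q(U_p)^{p-2}U_p$, Hölder's inequality ($1/p+1/q=1$) together with $\|S_{p-1}\|_{sv^q}=1$ shows that the $E$-valued mass of $S_{p-1}$ on $\Omega$ is controlled by a positive power of $\int_\Omega|S_{p-1}|*1$, forcing $S\equiv 0$ on $\Omega$.

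Finally, item $(v)$: $Z=*S$ is mass minimizing with respect to $u$. Given a bounded-variation section $\psi\in\Omega^1(E)$ with $\psi=\psi_u$, I must show $[[S]]_u\le[[S+d\psi]]_u$. The idea is to transfer the variational characterization of the $p$-level objects to the limit. At level $p$, Theorem~\ref{TTheorem A6} says precisely that $Z_p := *S_{p-1}$ minimizes $\xi\mapsto\|Z_p+(d\xi)_{u_p}\|_{sv^q}^q$ over $\xi=\xi_{u_p}$, with minimum at $\xi=0$; by Proposition~\ref{lemmaconvvnorms} (convexity) this yields, for any such $\xi$,
\begin{equation*}
\|Z_p\|_{sv^q}^q \le \|Z_p + (d\xi)_{u_p}\|_{sv^q}^q.
\end{equation*}
I would first smooth $\psi$ and approximate $u$ by the $u_p$, replace $(d\psi)_u$ by $(d\psi)_{u_p}$ (the error controlled by $\omega(u,u_p)-1\to0$ via Lemma~\ref{compprojnorm}), apply the above inequality, and then take $p\to\infty$. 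The left side converges to $[[S]]_u$ after raising to the $1/q$ power and using that $q\to 1$ turns $\|\cdot\|_{sv^q}$ into the mass $[[\cdot]]_u$ (the $sv^q$ norm of a measure-like object converges to its total variation/mass as $q\to1$ — this is the ``least gradient'' limit alluded to in the introduction, cf. Lemma~\ref{lemma:normlim}$(i)$ in spirit). The right side must be shown to converge to (or be bounded above by) $[[S+d\psi]]_u$; this uses lower semicontinuity of mass under weak-* convergence applied to $S_{p-1}+(d\psi)_{u_p}\rightharpoonup S+(d\psi)_u = S+d\psi$ (the last equality since $\psi=\psi_u$). The main obstacle I anticipate is precisely this last convergence: controlling $\|Z_p+(d\psi)_{u_p}\|_{sv^q}^q$ as $q\to1$ and relating it cleanly to the mass $[[S+d\psi]]_u$, because the $sv^q$ norms are $q$-th powers (so one needs care taking $q$-th roots and passing limits simultaneously in $p$ and in the test object), and because the projections $(\cdot)_{u_p}$ onto moving indefinite-orthogonal complements interact nontrivially with the bounded-variation (non-smooth) section $\psi$ — a mollification argument plus the uniform weight estimates $\omega(u,u_p)\to1$ should handle it, but this is where the real work lies.
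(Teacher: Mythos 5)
Items $(i)$--$(iii)$ of your plan are correct and follow the paper's argument. The genuine gap is in $(v)$: your strategy of passing the $p$-level minimality $\|Z_p\|_{sv^q}^q\le\|Z_p+(d\xi)_{u_p}\|_{sv^q}^q$ to the limit requires two semicontinuity statements that go the wrong way. On the left, $\|Z_p\|_{sv^q}^q\equiv 1$ by the normalization (\ref{dualnor}), whereas weak-$*$ convergence only gives $[[S]]_u\le\liminf$; the assertion that ``the left side converges to $[[S]]_u$'' is exactly the claim that no mass is lost in the limit, i.e.\ $[[S]]_u=1$, which is the nontrivial content and cannot be assumed. On the right, you invoke lower semicontinuity of mass to bound $\limsup_p\|Z_p+(d\xi)_{u_p}\|_{sv^q}^q$ \emph{above} by $[[S+d\psi]]_u$ --- but lower semicontinuity gives precisely the reverse inequality, and upper semicontinuity of $L^q$-type norms under weak convergence is false (oscillation can inflate the norms along the sequence). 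So the chain of inequalities cannot close. The paper avoids this entirely: it proves $[[Z+d\psi]]_u\ge 1$ \emph{directly} for every admissible $\psi$ by exhibiting an explicit near-optimal competitor. One takes a smooth Lipschitz approximation $w_k\to u$ (Theorem~\ref{thmgreenewu}), tests against $\xi_k=s((dw_k)_u)^{-1}dw_k$, kills the $d\psi$ contribution because $\langle d\psi,*dw_k\rangle=0$ (a closed current against an exact form) with a correction term vanishing as $w_k\to u$ in $C^0$, and bounds $Z[dw_k]\ge L$ from the Euler--Lagrange identity of Proposition~\ref{supptprop5} applied with comparison map $w_k$. Combined with $[[Z]]_u\le 1$ from $(ii)$, this gives $(v)$. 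Some version of this lower-bound construction is indispensable.

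Concerning $(iv)$, which you treat as the routine direction: it is in fact the hard inclusion, deferred in the paper to Theorem~\ref{theoremsuptharddir}. Your pointwise bound $|S_{p-1}|\le s(U_p)\,|S_{p-1}|_{sv^1}$ followed by H\"older does not close as written: $S_{p-1}\to 0$ only in $L^1(\Omega)$ while $\|S_{p-1}\|_{sv^q}=1$ with $q\to 1$, so the interpolation exponents degenerate, and the $L^s$ bounds from Corollary~\ref{cormainredsh} carry constants that must be balanced against $p$-dependent H\"older exponents. Your idea is salvageable --- the paper's Remark after Theorem~\ref{theoremsuptharddir} carries it out by splitting $|S_{p-1}|=|S_{p-1}|^{(p-2)/p}R_p$ with $R_p=|U_p|_{sv^p}^2$ and invoking the $H^1$ bound on $S_{p/2}(du_p)$ --- but the paper's primary proof is a different, variational argument: comparison with a best Lipschitz map $f$ via Proposition~\ref{supptprop5}, with the sign set $G_p$ controlled by Proposition~\ref{supptprop8}. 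Your argument for $(iii)$, by contrast, is the easy H\"older step and is fine.
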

\begin{proof}
For $(i)$ note  that, for a real valued test function $\phi$ with $||\phi||_{L^\infty} \leq 1$,  (\ref{normintv1}) implies
\[
\int_M |S_{p-1}|\phi*1 \leq \big|\big||S_{p-1}|\big|\big|_{L^1}= ||U_p||_{sv^p}^p=1.
\] 
Thus, after passing to a subsequence, there is a non-negative Radon measure $|S|$ such that $ |S_{p-1}| \rightharpoonup |S| $
with $\int_M |S|= \lim_{p \rightarrow \infty} \int_M |S_{p-1}|*1=1$. 

For $(ii)$, let $\xi \in \Omega^1(E)$ be a test function. From~(\ref{krisn}), (\ref{dualnor}) and (\ref{holds}) and the convergence $u_p \rightarrow u$,  
\begin{eqnarray}\label{krisn2}
S_{p-1}[\xi] \leq ||S_{p-1}||_{sv^q} ||\xi_{u_p}||_{sv^p} \leq ||\xi_{u_p}||_{sv^p}.
\end{eqnarray}
We first show that (after passing to a subsequence)   $ S_{p-1} \rightharpoonup S$.
Let $\xi$ be a test function such that the $L^\infty$-norm $ ||\xi||_{L^\infty} \leq 1$ with respect any positive definite metric on $E$.  Because   $u_p$ converges to $u$ in $C^0$, (\ref{krisn2}) implies that $S_{p-1}[\xi]$ is uniformly bounded. The convergence follows.
   For the second statement in $(ii)$,
\begin{eqnarray*}\label{krisn3}
S_{p-1}[\xi] &\leq&  ||\xi_{u_p}||_{sv^p}\\
&\leq&\omega(u_p,u)||\xi||_{sv^p}\\
&\rightarrow& ||\xi||_{sv^p}.
\end{eqnarray*}
The fact that $||S||_{mass,u,M} \leq 1$  follows easily from the above, Lemma~\ref{lemma:normlim} and  the assumption $s(\xi=\xi_u) \leq 1$. In order to check $ S=S_u$,
\begin{eqnarray}\label{curwithvalinbundle}
S_{p-1}[\xi]-S_{p-1}[\xi_u]&=&<S_{p-1}, *(\xi-\xi_u)> \nonumber\\
&=&<S_{p-1},*(\xi-\xi_u)_{u_p}> \nonumber \\
&\leq& ||(\xi-\xi_u)_{u_p})||_{sv^p}\\
&\rightarrow& 0. \nonumber
\end{eqnarray}
 The last holds because $u_p \rightarrow u$ in $C^0$. 
 
 In order to show
$(iii)$, let $B \subset M$ such that $|S| \big |_B=0$. This is equivalent to $|S_{p-1}| \rightarrow 0$ in $L^1(B)$. Formula~(\ref{Quv}) and H\"older imply that $||S_{p-1}||_{L^1} \rightarrow 0$ on $B$. Hence, for any test function $\xi$
 with support in $B$,
 \begin{eqnarray*}
S_{p-1}[\xi] \leq K ||S_{p-1}||_{L^1} ||\xi_{u_p}||_{L^\infty} \rightarrow 0.
\end{eqnarray*}

 Finally we are going to show $(iv)$. 
 We already know from $(ii)$, 
$||S||_{mass,u} \leq 1$.  
Conversely,  let $w_k \rightarrow u$ be a smooth Lipschitz approximation to $u$ as in Theorem~\ref{thmgreenewu} and set
\begin{eqnarray}\label{kaly479}
\xi_k = h_kdw_k \ \mbox{where} \ h_k = s((dw_k)_u)^{-1}.
\end{eqnarray}
Let $Z=\lim_{q \rightarrow 1} Z_q=*S$. Since $S_u=S$,
\begin{eqnarray}\label{kaly4}
 (Z +d\psi)[(\xi_k)_u ]
  = h_k Z [dw_k ]+ h_k<d\psi, *(dw_k)_u>.
\end{eqnarray}
Note that
\begin{eqnarray}\label{kkaly6}
<d\psi, *(dw_k)_u> \nonumber  
&=& <d\psi, *(dw_k +(dw_k,u)^\sharp u)>\\
&=& <d\psi, *dw_k>+<d\psi, (*dw_k, u-w_k)^\sharp u>. 
  \end{eqnarray}
  In the last equality we used the fact that $dw_k$ is perpendicular to $w_k$.
  The Lipschitz bound on $w_k$, the fact that $\psi$ is of bounded variation plus  $w_k \rightarrow u$ in $C^0$ imply that the second term limits on 0.
The first term is zero from $d^2=0$ because $\psi$ is an actual section of the flat bundle $E$. Thus, 
\begin{eqnarray}\label{kalyps6}
\lim_{k \rightarrow \infty}(Z+d\psi)[(\xi_k)_u]=\lim_{k \rightarrow \infty}h_k Z[dw_k].
  \end{eqnarray}

Note that $\liminf_{k \rightarrow \infty} h_k  \geq 1/L$ since 
\[
 \limsup_{k \rightarrow \infty} s(({dw_k})_u) \leq \limsup_{k \rightarrow \infty}( \omega(u,w_k)s(w_k) ) = L. 
\]
As a last step, 
\begin{eqnarray}\label{kaly1} 
          Z[dw_k] = \lim_{ p\rightarrow  \infty} <*S_{p-1}, *(dw_k)> \nonumber \\
           = \lim_{ p\rightarrow  \infty} <Q(U_p)^{p-2}U_p, dw_k>.
 \end{eqnarray}
But according to Proposition~\ref{supptprop5}, with $w = u_p$ and $f = w_k$
\[
             <  \omega(u_p, w_k)Q(U_p)^{p-2}U_p, du_p - dw_k> = 0.
 \]
Thus,
\begin{eqnarray}\label{kaly2}
      \lim_{ p\rightarrow  \infty}<Q(U_p)^{p-2}U_p, dw_k> &=&
       \lim_{ p\rightarrow  \infty} \left(1/\kappa_p<\omega(u_p,w_k)Q(U_p)^p>\right) \nonumber\\
       & \geq&  \lim_{ p\rightarrow  \infty} 1/\kappa_p
       = L.
 \end{eqnarray}
 The last limit follows from Lemma~\ref{klemma1}.
Since  $\liminf_{k \rightarrow \infty} h_k  \geq 1/L$,  (\ref{kalyps6}), (\ref{kaly1}) and (\ref{kaly2}) imply $||S||_{mass,u,M}=||Z||_{mass,u,M} \geq1$. This  completes the proof. 
\end{proof}

We have  analogues of Definition~\ref{measrcurr} and Theorem~\ref{TTheorem A5} for $V_q=*(S_{p-1}\times u_p)$ as follows:

\begin{definition}\label{measrcurr2} Let $f$ be a Lipschitz section as before and $\zeta$ a 1-current  with values in the flat bundle $ad(E)$. 
 For $U \subset M$ open, define
\begin{eqnarray*}
||\zeta||_{mass,f,U} &=& \sup \{ \zeta[\xi \times f]: \xi \in \Omega^1(E), spt(\xi) \subset U,  s( \xi_f) \leq 1 \}\\
&=& \sup \{ \zeta[\phi]: \phi \in \Omega^1(ad(E)), spt(\phi) \subset U, \phi=\phi_f, s( \phi) \leq \sqrt2  \}.
\end{eqnarray*}
If $||\zeta||_{mass,f,M}<\infty$  we call $\zeta$ a \it{Radon measure with values in $T^*M \otimes ad(E)$}.
\end{definition}

 \begin{theorem} \label{thm:limmeasures0} Given a sequence $p \rightarrow \infty$ ($q \rightarrow 1$) there exists a subsequence (denoted again by $p$) and a  Radon measure $ V$ with values in $T^*M \otimes ad(E)$ and support equal to the support of $S$ such that  $V_q \rightharpoonup  V.$ Furthermore,  $V$ is a closed 1-current,
  $V=V_u$ (i.e $V [\phi]=V [\phi_u]$ for all test functions $\phi$) and $||V||_{mass,u,M} =2$. In particular $V$ is non-zero.
 \end{theorem}
 \begin{proof}
 Recall  $Z_q=*S_{p-1}$ and $V_q =Z_q \times u_p$. We first claim:
\begin{itemize}
 \item $(i)$ $Z_q = V_q u$
\item $(ii)$ $V_q  \rightharpoonup  V = Z \times u$ if and only if $Z_q  \rightharpoonup  Z$ as $q \rightarrow 1$. 
\item  $(iii)$ $Z = V u $ 
\item $(iv)$ the supports of $Z $ and $V$ are identical.
\end{itemize}
Indeed, in Section~\ref{sect:regtheor} we proved that $S_{p-1} $ is in $L^s$ for all $s$, which gives us enough regularity to make the algebraic computations that follow. The linear algebraic relationships are valid if $u$ is merely continuous and we are assuming it is Lipschitz.  Multiplying functions in $L^s$ and  measures by a continuous tensor preserves either $L^s$ or the measure space.  Hence $(i)$-$(iv)$ are straightforward.  

It follows that $V_q \rightharpoonup  V$, and that the support of $V$ is equal to the support of $S$. Also, since $V_q$ is closed, $V$ is also closed.
Finally, $||V||_{mass,u,M} =2$
 follows from extending these linear identities relating $Z $ and $V$ to identities relating the test functions $\xi$ (for $Z)$ and $\phi$ (for $V$). In other words, let
 $\phi =  \xi \times u$. By Proposition~\ref{helpemb}$(iii)$, $(\phi, \phi)^\sharp=2(\xi,\xi)^\sharp$.
So $s(\phi) = s(\xi)$.
But also for the same reason  
\[
          V [\phi] =  (Z \times u)[\xi \times u] = 2 Z[\xi]. 
\]
This yields $||V||_{mass,u,M} =2$ immediately. 
\end{proof}
 

 We have seen that the 1-currents $V_q$ and $V$ are closed. Therefore, by passing to the universal cover, or working locally, we can construct Lie algebra valued functions $v_q \rightarrow v$ weakly in $BV_{loc}$ and $dv_q=V_q$, $dv=V$. The following theorem follows  from Theorem~\ref{thm:limmeasures0}.
 
 \begin{theorem} \label{thm:limmeasures04r} There exists a local Lie algebra valued function of bounded variation $v$ such that 
 $dv=V$. 
  \end{theorem}
 It is worth mentioning that, even though $dv=(dv)_u$, $v$ is not equal to $v_u$. 
 
 \subsection{The limit of $W_q$ as $q \rightarrow 1$}\label{limq----1} 
We continue with the notation of the previous section. We write $||.||_{mass}=||.||_{mass,id}$. All the tensors are rescaled. 
 
\begin{theorem}\label{thm:limmeasures450}  Given a sequence $p \rightarrow \infty$ ($q \rightarrow 1$), there exists a subsequence (denoted again by $\{p\}$) 
and  distributions $T$ and $W$ with values in $Sym^2(T^*M)$, $T^*M \otimes ad(F)$ respectively such that after normalizing 
\begin{itemize}
\item $(i)$  $ T_q \rightharpoonup T$, $ W_q \rightharpoonup W$ 
\item $(ii)$ $dW=0$  with respect to the flat connection on $ad(F)$ and  $W_{id}=W$
\item $(iii)$ $Tr_gT=|S|$ and $*(\omega_{mc}\wedge W)^\sharp=2|S|$
\item $(iv)$The supports of  $T$, $W$ and $|S|$ are equal and contained in the canonical geodesic lamination $ \lambda$ associated to the hyperbolic metrics $g$, $h$ and the homotopy class.
\end{itemize}
\end{theorem}

\begin{proof} Let   $T_q=T'_q-1/pTrQ(U_p)^pg$ where 
$T'_q=(S_{p-1}, U_p)^\sharp=(Q(U_p)^{p-2}U_p, U_p)^\sharp$.
From Theorem~\ref{thm:limmeasures0}, $|S_{p-1}|=TrQ(U_p)^p$ converges to a nonnegative Radon measure $|S|$ and thus  $T_q$ and $ T'_q$ have the same limit. For a test function $\xi=\xi_1 \otimes \xi_2$  of  $TM \otimes TM$ supported in $U$
\begin{eqnarray*}
|T'_q(\xi)|&=&|<S_{p-1}\xi_1, U_p \xi_2>| \leq ||S_{p-1}||_{sv^q} ||U_p||_{sv^p}||\xi||_{sv^\infty}\\
&\leq& ||\xi||_{sv^\infty}.
\end{eqnarray*}
Here we used (\ref{dualnor}). It follows that $ T'_q \rightharpoonup T$.  The proof $ W_q \rightharpoonup W$ is similar. This proves $(i)$.  
$(ii)$ Follows because $W$ is a weak limit of distributions $W_q$ such that $(W_q)_{id}$=$W_q$, $dW_q =0$ and 
$(iii)$ follows from   Proposition~\ref{positiv1}, 
by taking weak limits.

We now come to $(vi)$.  By $(iii)$, the support of  $|S|$ is contained in the support of $T$. To see the converse,
denote the entries of the matrix $Q^l(du_p)$ by $Q^l_{ij}$. In normal coordinates,  
 \begin{eqnarray}\label{tensorS0}
 Q^2_{ij}(du_p)= d_\alpha u_p^id_\alpha u_p^j, \ \  T'_{q, \alpha \beta} = \kappa_p^pQ^{p-2}_{ij}d_\alpha u_p^i d_\beta u_p^j.
\end{eqnarray}
Thus, using the symmetry of $Q$,
\begin{eqnarray*}
(T'_q;T'_q)&=&\kappa_p^{2p}Q^{p-2}_{ij}d_\alpha u_p^i d_\beta u_p^jQ^{p-2}_{kl}d_\alpha u_p^k d_\beta u_p^l \\
&=&\kappa_p^{2p}Q^{p-2}_{ij} d_\alpha u_p^id_\alpha u_p^k Q^{p-2}_{kl}d_\beta u_p^ld_\beta u_p^j\\
&=&\kappa_p^{2p}Q^{p-2}_{ji} Q^2_{ik} Q^{p-2}_{kl}Q^2_{lj}\\
&=&\kappa_p^{2p}Q^p_{jk}  Q^p_{kj}\\
&=&TrQ(U_p)^{2p}.
\end{eqnarray*}
It follows that on  any ball $B$ the $L^1$-norm of $T'_q$ is bounded above and below by a constant times the $L^1$-norm of $|S_{p-1}|=TrQ(U_p)^p$. This in turn implies that the support of $T'$ is contained in the support of $|S|$. Indeed, choose a  ball $B \subset M$ that misses the support of $|S|$. Then, $|S_{p-1}| \rightharpoonup 0$ which implies, integrating against the function 1, that the $L^1$-norm of  $|S_{p-1}|$ (and hence also the  $L^1$-norm of  $|T'_q|$) converges to zero. Therefore,  $B$ misses the support of $|T'|$.

Since $W_q=\beta(T_q)=T_q \times id$ and $\beta$ is an isometry (up to the constant factor of  $\sqrt 2$, cf. \cite[Proposition 3.5(iii)]{daskal-uhlen2}) the support of $T$ is equal to the support of $W$. 
By Theorem~\ref{thm:limmeasures0}  the supports are contained in the canonical lamination. 
\end{proof}

As with the case of $V_q$ and $V$, we can construct local Lie algebra valued functions  $w_q,w$ such that  $dw_q=W_q$ and  $dw=W$. The function $w$ is locally of bounded variation and $w_q \rightharpoonup w$ weakly in $BV_{loc}$. We will explore the global properties of $v$ and $w$ in our next paper \cite{daskal-uhlen2}. As it turns out these local Lie algebra functions induce a transverse measure on the canonical lamination.

 \section{The support of the measures}\label{sec7}

 \subsection{The support theorem}The main result of this section is to provide a proof  of the following theorem:

\begin{theorem}\label{thm:supptmeasure} The support of the measure $|S|$ is contained in the canonical geodesic lamination $ \lambda$ associated to the hyperbolic metrics $g$, $h$ and the homotopy class (cf. Definition~\ref{canlamin}).
\end{theorem}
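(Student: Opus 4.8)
The plan is to show that the support of $|S|$ (equivalently of $S$ and $V$, which share the same support by Theorems~\ref{TTheorem A5} and~\ref{thm:limmeasures0}) is contained in $\lambda_{\hat f}$ for \emph{every} optimal best Lipschitz map $\hat f$, and in fact in $\lambda_f$ for every best Lipschitz map $f \in \mathcal F$; intersecting over $f \in \mathcal F$ then gives $\operatorname{supp}|S| \subset \lambda$ by Definition~\ref{canlamin}. The mechanism is the mass-minimization / comparison inequality already built in Section~\ref{lipqgoesto1}: since $Z=*S$ is mass minimizing with respect to $u$ with $[[Z]]_u=1$ and $\int_M|S|\,*1=1$, any comparison section that is ``flat enough'' away from a given point forces $|S|$ to vanish there.

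\textbf{Key steps.} First I would fix an arbitrary best Lipschitz map $f\in\mathcal F$ with maximum stretch set $\lambda_f$, and an open set $\Omega \Subset M\setminus\lambda_f$. On $\overline\Omega$ we have $L_f(x)\le L-\epsilon$ for some $\epsilon>0$, so by Theorem~\ref{thmgreenewu} (and a localized version of the smoothing) we may produce smooth maps $w_k\to f$ with $s(dw_k)\le L-\epsilon/2$ on $\Omega$ for $k$ large, while the global Lipschitz constant of $w_k$ still converges to $L$. The heart of the argument is then to rerun the computation in the proof of Theorem~\ref{TTheorem A5}$(v)$: using Proposition~\ref{supptprop5} with $w=u_p$ and the comparison map $w_k$,
\[
\langle \omega(u_p,w_k)\,Q(U_p)^{p-2}U_p,\; du_p - dw_k\rangle = 0,
\]
so that $\langle Q(U_p)^{p-2}U_p, dw_k\rangle = \tfrac1{\kappa_p}\langle\omega(u_p,w_k)Q(U_p)^p\rangle$. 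Now I would localize: testing the normalized Noether current against a cutoff $\phi$ supported in $\Omega$ times the rescaled comparison field $h_k\,dw_k$ (with $h_k = s((dw_k)_u)^{-1}$ as in \eqref{kaly479}), Hölder's inequality \eqref{holds} gives
\[
\int_\Omega \phi\,|S_{p-1}|\,*1 \;=\;\langle \phi\, S_{p-1}, U_p\rangle_\Omega
\;\le\; h_k^{-1}\,\|S_{p-1}\|_{sv^q(\Omega)}\,\|\phi\,h_k (dw_k)_{u_p}\|_{sv^p(\Omega)} \;\le\; \frac{s((dw_k)_{u_p})}{\text{(global lower bound from }\kappa_p)}\cdot(\text{const}),
\]
and the point is that on $\Omega$ the quantity $s((dw_k)_{u_p}) \le \omega(u_p,w_k)\,s(dw_k)\le L-\epsilon/3$ for $p$ large, strictly below $L=\lim\kappa_p^{-1}$. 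A careful bookkeeping of these factors — comparing the $sv^q$-mass of $S_{p-1}$ restricted to $\Omega$ against the global normalization $\langle Q(U_p)^p\rangle=1$ — yields $\limsup_{p}\int_\Omega |S_{p-1}|*1 < \int_M|S|*1 = 1$ unless $\int_\Omega|S|*1=0$; more precisely one gets $\int_\Omega |S|*1 \le (L-\epsilon/3)^2/L^2 \cdot \int_\Omega|S|*1$, forcing $|S|\big|_\Omega = 0$. Since $\Omega\Subset M\setminus\lambda_f$ was arbitrary, $\operatorname{supp}|S|\subset\lambda_f$; intersecting over $f\in\mathcal F$ gives $\operatorname{supp}|S|\subset\lambda$, and then $\operatorname{supp}S=\operatorname{supp}V=\operatorname{supp}|S|\subset\lambda$ by parts $(iii)$--$(iv)$ of Theorem~\ref{TTheorem A5} and Theorem~\ref{thm:limmeasures0}$(ii)$.

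\textbf{Main obstacle.} The delicate point is making the localized Hölder estimate genuinely \emph{strict}: one must account for the fact that $\|S_{p-1}\|_{sv^q}$ is globally normalized to $1$ but could concentrate its $sv^q$-mass anywhere, so a naive bound only gives $\int_\Omega|S|*1\le 1$. The resolution is to use mass minimization in the form of Theorem~\ref{TTheorem A5}$(v)$ \emph{together with} the sharp constant $[[Z]]_u=1$: the comparison field $h_k dw_k$ must be glued (via the cutoff $\phi$ and its complement, using a near-optimal test field off $\Omega$) into a \emph{global} competitor whose $u$-mass is $\le 1$, and the strict inequality $s(dw_k)|_\Omega < L$ then produces a strict deficit in the total pairing $Z[\text{competitor}]$, contradicting $[[Z]]_u=1$ unless $|S|\big|_\Omega=0$. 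Equivalently one argues: if $|S|(\Omega)=c>0$ then by splitting the optimal comparison $w_k$ used in Theorem~\ref{TTheorem A5}$(v)$ and replacing it on $\Omega$ by an $f$-adapted field of strictly smaller stretch, one lowers $[[Z]]_u$ below $1$, which is impossible. Carrying this gluing out cleanly — ensuring the modified field is still an admissible $\xi=\xi_u$ with $s(\xi)\le 1$ and controlling the cross terms supported near $\partial\Omega$ — is the technical crux; everything else is a repackaging of estimates already in Sections~\ref{sect:regtheor} and~\ref{lipqgoesto1}.
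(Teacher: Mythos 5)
Your global strategy is the right one and is essentially the paper's: compare $u_p$ against a best Lipschitz map $f$ via Proposition~\ref{supptprop5}, exploit that $s(df)<L$ on a ball $B\Subset M\setminus\lambda_f$, conclude $|S|(B)=0$, and intersect over $f\in\mathcal F$. But the central analytic step — localizing the Euler--Lagrange comparison identity to $B$ — is not carried out, and the one displayed inequality you offer in its place is false: $\langle\phi S_{p-1},U_p\rangle_\Omega=\int_\Omega\phi\,TrQ(U_p)^p$ cannot be bounded by a H\"older pairing of $S_{p-1}$ against $dw_k$; to trade $U_p$ for a comparison field you must use the Euler--Lagrange equation, and that equation is a \emph{global} identity $\langle\omega(u_p,f)S_{p-1},du_p-F\rangle_M=0$. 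The paper's localization device, which your proposal lacks, is the set $G_p$ where the integrand $(Q(du_p)^{p-2}du_p,du_p-F)^\sharp$ is negative: by convexity (Lemma~\ref{sptlemma7}) the contribution of $G_p$ is $O(1/p)$ (Proposition~\ref{supptprop8}), and off $G_p$ the integrand is pointwise nonnegative, so restricting the integral to $B$ only decreases it. Even granting that, a second step is needed which you do not supply: from $\lim_p\langle Q(U_p)^{p-2}U_p,U_p-\kappa_pF\rangle_B\le 0$ one does not immediately get $\langle Q(U_p)^p\rangle_B\to 0$; the paper splits off a factor $\hat\tau^{1/2}$ with $\kappa_p s(F)<\hat\tau<1$ and applies convexity to $\kappa_pF/\hat\tau^{1/2}$, producing a term bounded by $(2/p)\hat\tau^{p/2}\,vol(M)\to 0$ and leaving $(1-\hat\tau^{1/2})\langle Q(U_p)^p\rangle_B\le 0$. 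This is where the strict inequality $s(df)<L$ actually enters.

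The fallback you sketch in the ``Main obstacle'' paragraph does not close the gap either. First, the mass-minimization property (Definition~\ref{measrcurr}) only compares $Z$ with $Z+d\psi$ for exact perturbations; replacing the \emph{test field} on $\Omega$ by a smaller-stretch field is not a perturbation of that form, so $[[Z]]_u=1$ gives no contradiction. Second, your claimed deficit $s((dw_k)_{u_p})\le L-\epsilon/3$ on $\Omega$ is unjustified: since $w_k\to f$ and $f$ is in general a \emph{different} map from $u$, the projection into the tangent space along $u_p$ inflates singular values by $\omega(u_p,w_k)\to\omega(u,f)=1+\tfrac12(u-f,u-f)^\sharp$, which need not be close to $1$; this is exactly why the paper works with $F=\omega(u_p,f)^{-1}(df)_{u_p}$ (so that $s(F)\le s(df)$ by Lemma~\ref{compprojnorm}) and then treats the weights $\omega$, bounded between $1$ and a constant $k$, as a nuisance factor inside the $G_p$ argument rather than in a mass estimate. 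Finally, a small bookkeeping point: the equality of supports of $S$ and $|S|$ is not available from Theorem~\ref{TTheorem A5} alone — the inclusion $\mathrm{supp}\,|S|\subset\mathrm{supp}\,S$ is Theorem~\ref{theoremsuptharddir}, proved by the same $G_p$ machinery — though for the present theorem only the easy inclusion $\mathrm{supp}\,S\subset\mathrm{supp}\,|S|$ is needed.
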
 

For the rest of this section, we rescale $M$ to assume $L = 1.$  This rescales  the curvature of $M$ to be $-1/L^2$ and multiplies the volume by $L^2.$  We recall that $L > 1$ originally, so $M$ becomes a hyperbolic manifold of larger volume and smaller curvature, although we do not need this in this section.  While this is not strictly necessary, it allows us to avoid carrying the extra factor of $L$ around.

From now on we assume that $u_p$ satisfies the $J_p$-Euler-Lagrange equations, $W =du_p$ and $f$ is a comparison best Lipschitz map.
This means that $s(df) = s((df)_f)\leq L$. From Proposition~\ref{supptprop5} 
\[
   < \omega(u_p,f) S_{p-1}(W), W - F> = 0
\]
where $F=\omega(u_p,f)^{-1}(df)_{u_p}$ and, from  Corollary~\ref{compprojnorm},  $s((df)_{u_p}) \leq \omega(u_p,f) s(df).$ 
Recall that in (\ref{deromea}) we have defined $1 + (u_p-f, u_p-f)^\sharp = \omega(u_p,f) \geq 1$ as a weight, which now depends on $x \in M$. We will find it both useful and a nuisance  in the rest of the section.

  Note that $F_{u_p} = F$ and $(df)_f = df.$ In the rest of the section, we will use this fact without comment to obtain inequalities  in tangent spaces where the metric is positive definite which are not available in $\R^{2,1}.$

In order to localize the  Euler-Lagrange equations above, we check on the region in which the integrand is non-negative.  Let 
\[
Y_p =Y_p(F) = \{x \in M: (S_{p-1}(du_p); du_p - F)^\sharp \geq 0 \}.  
\]
Another way to put this is to  let $H = (S_{p-1}(du_p); du_p - F)^\sharp$.  Now $H$ is an $L^1$ function.  We can define $Y_p$ is the support of $|H|-H.$  

\begin{proposition}\label{supptprop8} Let $\kappa_p$  be the normalization introduced in (\ref{normintv1}) and $U_p = \kappa_p du_p. $ Assume that $s(F) \leq 1 (=L).$ If $Y^c_p=M \backslash Y_p$, then  
\[
\lim_{p \rightarrow \infty} <S_{p-1}(U_p), F - du_p>_{Y^c_p}  = 0. 
\]
\end{proposition}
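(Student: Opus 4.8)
The plan is to estimate the integrand of $< Q(U_p)^{p-2}U_p,\, F-du_p >_{G_p}$ pointwise and then exploit the concentration forced by the normalization $\int_M|S_{p-1}|*1=1$, where $|S_{p-1}|=TrQ(U_p)^p$ as in (\ref{def|sp-1}). Notably, the Euler--Lagrange equations are not needed for this particular statement: only the linear algebra inequalities of Section~\ref{sect1} together with the normalizations of Section~\ref{normkappa} enter.

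First I would record two pointwise facts. Since $U_p=\kappa_p\,du_p$ is tangent to $N$ along $u_p$, (\ref{def|sp-1}) gives $(Q(U_p)^{p-2}U_p;U_p)^\sharp=|S_{p-1}|$, hence $(Q(U_p)^{p-2}U_p;du_p)^\sharp=\kappa_p^{-1}|S_{p-1}|\ge 0$. On the other hand, the singular values of $S_{p-1}=Q(U_p)^{p-2}U_p$ are the $(p-1)$-st powers of those of $U_p$ and $(p-1)q=p$, so $|S_{p-1}|_{sv^q}=|S_{p-1}|^{1/q}$; combining this with Proposition~\ref{shat1}$(ii)$ and with $|F|_{sv^p}\le r^{1/p}s(F)=2^{1/p}s(F)\le 2^{1/p}$ (here $r=2$ and $s(F)\le 1$ by hypothesis) yields the pointwise bound
\[
 (Q(U_p)^{p-2}U_p;F)^\sharp\le 2^{1/p}\,|S_{p-1}|^{1/q}.
\]

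Next I would integrate over $G_p$, on which $(Q(U_p)^{p-2}U_p;du_p-F)^\sharp<0$ by definition, so that the integrand $(Q(U_p)^{p-2}U_p;F-du_p)^\sharp$ is non-negative there. Subtracting the two pointwise bounds above and integrating gives
\[
 0\leq\; < Q(U_p)^{p-2}U_p,\, F-du_p >_{G_p}\; \leq\; 2^{1/p}\int_{G_p}|S_{p-1}|^{1/q}*1\;-\;\kappa_p^{-1}\int_{G_p}|S_{p-1}|*1 .
\]
Applying Young's inequality ($x^{1/q}=x^{1/q}\cdot 1\le\tfrac1q x+\tfrac1p$ for $x\ge 0$) pointwise to $x=|S_{p-1}|$, and writing $a_p:=\int_{G_p}|S_{p-1}|*1$, which lies in $[0,1]$ by (\ref{normintv1}), the right-hand side is bounded by $(\tfrac{2^{1/p}}{q}-\kappa_p^{-1})a_p+\tfrac{2^{1/p}\,vol(M)}{p}$.

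Finally I would let $p\to\infty$: one has $2^{1/p}\to 1$, $\tfrac1q=1-\tfrac1p\to 1$, and $\kappa_p^{-1}=J_p(u_p)^{1/p}\to L=1$ by Lemma~\ref{klemma1} and Lemma~\ref{pintconvto}; since $a_p\le 1$ this forces the upper bound to $0$, and sandwiching with the lower bound $0$ gives the claim. The one delicate point is the mismatch between $|S_{p-1}|_{sv^q}=|S_{p-1}|^{1/q}$ and $|S_{p-1}|$ itself: Young's inequality absorbs it into an $O(1/p)$ term together with a coefficient $\tfrac{2^{1/p}}{q}-\kappa_p^{-1}$ that tends to $0$ precisely because $\kappa_p^{-1}\to 1$, so the estimate is tight exactly at the normalization. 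Everything else is routine bookkeeping with the definitions of Section~\ref{normkappa}.
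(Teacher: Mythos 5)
Your argument is correct and is essentially the paper's proof in a slightly different dress: the paper splits off the term $(1-\kappa_p^{-1})\,Tr Q(U_p)^p$ (controlled by the normalization $\int_M Tr Q(U_p)^p\,{*1}=1$ and $\kappa_p\to L^{-1}=1$) and bounds the remaining term $\langle Q(U_p)^{p-2}U_p, F-U_p\rangle_{G_p}$ by $\tfrac{2}{p}\,vol(M)$ via the convexity inequality of Lemma~\ref{sptlemma7}, whereas you fold the normalization discrepancy into the coefficient $\tfrac{2^{1/p}}{q}-\kappa_p^{-1}$ and obtain the same $O(1/p)$ bound from Schatten--H\"older plus scalar Young, which is exactly the convexity inequality $p\,(Q(W)^{p-2}W,F-W)^\sharp\le Tr Q(F)^p-Tr Q(W)^p$ rewritten. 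Both arguments use only the definition of $G_p$ for the lower bound $0$, the hypothesis $s(F)\le 1$, and $\kappa_p^{-1}=J_p(u_p)^{1/p}\to 1$, so nothing is missing.
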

\begin{proof} Write the integrand as
\[ 
        (Q(U_p)^{p-2}U_p; F- \kappa_p du_p)^\sharp  
         + (1-\kappa_p^{-1}) TrQ(U_p)^p.
 \]
By the normalization we have imposed, and  $\kappa_p \rightarrow 1$, the limit of the integral of the second term is 0.  
By Lemma~\ref{sptlemma7}, with $\omega$ to be the characteristic function of $Y^c_p$ and $W=du_p$,
\begin{eqnarray*}
      <Q(du_p)^{p-2}du_p, F -du_p>_{Y^c_p} &\leq& 
      1/p \left(<Q(F)^p>_{Y^c_p} - <Q(du_p)^p>_{Y^c_p}\right) \\
      &\leq& 2/p<s(F)^p>  \\
     &\leq& 2/p \ vol(M).
\end{eqnarray*}
 In the second inequality we used   (\ref{shatl1}).  
\end{proof}

\begin{support}  Let $f$ be any comparison best Lipschitz map and choose a ball $B \subset M$ away from the maximum stretch locus $\lambda_f$ of $f$. Normalize as before, $<Q(U_p)> = 1$. By Proposition~\ref{supptprop5}, 
\[
 <\omega(u_p,f)S_{p-1}(du_p), du_p - F> = 0
 \]
  where $F = \omega(u_p,f)^{-1}(df)_{u_p} $ satisfies $s(F) \leq s(df)$ and $\omega(u_p,f) \geq 1$.
 Hence, the following  is true for integrals with positive integrands:
\begin{eqnarray*}
<Q(U_p)^{p-2}U_p, du_p - F>_{B \cap Y_p} &\leq& <Q(U_p)^{p-2}U_p, du_p - F>_{Y_p} \\ 
   &\leq&         <\omega(u_p,f)Q(U_p)^{p-2}U_p, du_p - F)>_{Y_p} \\
   &=&<\omega(u_p,f)Q(U_p)^{p-2}U_p,  F - du_p>_{Y^c_p}\\
       &\leq& k<Q(U_p)^{p-2}U_p, F - du_p>_{Y^c_p}\\
       &\rightarrow& 0.  
\end{eqnarray*}
In the third line we used the Euler-Lagrange equations. Here $k = \max \omega(u_p,f)$ is finite since $u_p\rightarrow u$ in $C^0.$ 
The last term converges to zero by Proposition~\ref{supptprop8}.

Multiplying by $\kappa_p \rightarrow 1$,   
\[
         \lim_{p \rightarrow \infty}<Q(U_p)^{p-2}U_p, U_p - \kappa_p F>_{B \cap Y_p} = 0.
\]
By the definition of $Y_p$,
\[
       <Q(U_p)^{p-2}U_p, U_p - \kappa_p F>_{B \cap Y^c_p} \leq 0.
\]
By combining with the previous equality,
\begin{eqnarray}\label{tauhat21}
 \lim_{p \rightarrow \infty} <Q(U_p)^{p-2}U_p,  U_p -\kappa_p F>_B \leq 0.
\end{eqnarray}
Choose $p$ large enough so that 
\begin{eqnarray}\label{tauhat}
\tau < \kappa_p s(F) < \hat \tau < 1.
\end{eqnarray}
Then rewrite  (\ref{tauhat21}) as 
\begin{eqnarray}\label{tauhat2132}
  \lefteqn{\lim_{p \rightarrow \infty} ((1 - \hat \tau^{1/2})<Q(U_p)^p>_B}  \nonumber \\
 &-& \hat \tau^{1/2}<Q(U_p)^{p-2}U_p, \kappa_p \left(F/\hat \tau^{1/2}\right) - U_p>_B  \leq 0.
\end{eqnarray}
By Lemma~\ref{sptlemma7}  with $\omega$ equal to the characteristic function of $B$, discarding the  negative term $- 1/p <Q(U_p)^p>_B$ and noting (\ref{tauhat}),
\begin{eqnarray*}
<Q(U_p)^{p-2}U_p, \kappa_p \left(F/\hat \tau^{1/2}\right)  - U_p>_B &\leq&
1/p< Q(\kappa_p F /\hat \tau^{1/2})^p>_B\\
&\leq& 2/p <{{\hat \tau}^{p/2}}> \\
&\leq& \left(2/p\right) {\hat \tau}^{p/2}  vol(M)\\
 &\rightarrow& 0.
\end{eqnarray*}
It follows from (\ref{tauhat}) and (\ref{tauhat2132}) that $\lim_{p \rightarrow \infty} |S_{p-1}|(B)=\lim_{p \rightarrow \infty} <Q(U_p)^p>_B = 0.$ Thus $B$ misses the support of $|S|$ and completes the proof.
\end{support}

\begin{corollary}\label{cor:supptmeasure}The supports of  the measures $S$,$V$, $T$ and $W$
are contained in the canonical lamination.
\end{corollary}

\begin{proof} Follows  from  Theorem~\ref{thm:supptmeasure}, Theorem~\ref{TTheorem A5}, Theorem~\ref{thm:limmeasures0} and Theorem~\ref{thm:limmeasures450} 
\end{proof}

\begin{remark}The proof of Theorem~\ref{thm:supptmeasure} and therefore also Corollary~\ref{cor:supptmeasure} does not use the fact that the Lipschitz constant is greater than one. Without this assumption the proof implies that the supports of the measures are contained in the set $\lambda=\cap_{f \in \mathcal F} \lambda_f$ (cf. Definition~\ref{canlamin}). Since the measures are not zero, it also follows that $\lambda$ is non-empty. However $\lambda$ may not be a geodesic lamination (cf. \cite{kassel}).
\end{remark}

\end{document}